\newtheorem{theorem}{Theorem}[section]
\newtheorem{definition}[theorem]{Definition}
\newtheorem{lemma}[theorem]{Lemma}
\newtheorem{proposition}[theorem]{Proposition}
\newtheorem{remark}[theorem]{Remark}
\numberwithin{equation}{section}
\newcommand{\interior}[1]{%
	{\kern0pt#1}^{\mathrm{o}}%
}
\def\esssup{\mathop{\rm ess\,sup\,}}
\def\supp{{\rm supp\,}}
\newcommand{\R}{\mathbb{R}}
\newcommand{\barint}{
	\rule[.036in]{.12in}{.009in}\kern-.16in \displaystyle\int }
\newcommand{\barcal}{\mbox{$ \rule[.036in]{.11in}{.007in}\kern-.128in\int $}}
\let\@wraptoccontribs\wraptoccontribs
\mathchardef\mhyphen="2D
\title{Estimates for the wave equation on $\beta$-dimensional spaces of measures}
\author[R. Basak]{Riju Basak}
\address[R. Basak]{Department of Mathematics, National Taiwan Normal University, No. 88, Section 4, Tingzhou Road, Wenshan District, Taipei City, Taiwan 116, R.O.C.
}
\email{rijubasak52@ntnu.edu.tw}
\author[D. Spector]{Daniel Spector}
\address[D. Spector]{Department of Mathematics, National Taiwan Normal University, No. 88, Section 4, Tingzhou Road, Wenshan District, Taipei City, Taiwan 116, R.O.C.
\newline
National Center for Theoretical Sciences\\No. 1 Sec. 4 Roosevelt Rd., National Taiwan
University\\Taipei, 106, Taiwan
\newline
Department of Mathematics, University of Pittsburgh, Pittsburgh, PA 15261 USA
}
\email{spectda@protonmail.com}
\begin{document}
	\begin{abstract}
In this paper, we establish Miyachi-Peral-type fixed-time estimates for wave multipliers acting on $\beta$-dimension stable spaces of measures.  Our estimates give a refinement of known estimates for the Hardy space.  From these bounds, we deduce corresponding estimates for the wave equation with measure data.
\end{abstract}
	\maketitle
\section{Introduction}
\subsection{Main Results}
For $b\in \mathbb{R}$ and $f\in \mathcal{S}(\mathbb{R}^n)$, define the Fourier multiplier operator 
\begin{align}\label{main_multiplier}
    T_{b}f(x)=\int_{\mathbb{R}^n} (1+4\pi^2|\xi|^2)^{-b/2}e^{i|\xi|} \widehat{f}(\xi) e^{2\pi i \xi\cdot x} \, d\xi,
\end{align}
where 
\begin{align*}
 \widehat{f}(\xi)=\int_{\mathbb{R}^n} f(x) e^{-2\pi i\xi\cdot x} \, dx
\end{align*}
is our convention for the Fourier transform.  Further define
\begin{align*}
b_p=\frac{n+1}{2}-\frac{1}{p}.
\end{align*}

It is a classical result of Miyachi \cite{Miyachi-singular} (see also  \cite{Miyachi-wave,Peral-80}) that for suitable values of $b$, $T_{b}$ admits an extension as a bounded operator from the Hardy space $\mathcal{H}^1(\mathbb{R}^n)$ to $L^p(\mathbb{R}^n)$ and $L^1(\mathbb{R}^n)$ to $L^p(\mathbb{R}^n)$.  For the Hardy space $\mathcal{H}^1(\mathbb{R}^n)$ one has estimates up to the endpoint $b=b_p$:
\begin{theorem}\label{M_Hardy}
Let $p \in [1,\infty]$ and suppose $b \geq b_p$.  There exists a constant $C=C(n,p)$ such that 
    \begin{align*}
        \|T_{b}f\|_{L^p(\mathbb{R}^n)} \leq C \|f\|_{\mathcal{H}^1(\mathbb{R}^n)} 
    \end{align*}
    for all $f \in \mathcal{H}^1(\mathbb{R}^n)$.  
\end{theorem}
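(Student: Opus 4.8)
Since $T_b$ is a Fourier multiplier it is translation invariant, and $\mathcal H^1(\mathbb R^n)$ admits an atomic decomposition; so the plan is to reduce the theorem to a uniform estimate $\|T_b a\|_{L^p(\mathbb R^n)}\le C(n,p)$ over all $\mathcal H^1$-atoms $a$, which may be taken supported in a ball $B=B(0,r)$ with $\|a\|_{L^\infty}\le|B|^{-1}$ and $\int a=0$. (That such an atomic bound yields boundedness on all of $\mathcal H^1$ is routine: $m(\xi)=(1+4\pi^2|\xi|^2)^{-b/2}e^{i|\xi|}$ is bounded, so $T_b$ is well defined on $L^2\supset\mathcal H^1\cap L^2$, a dense subspace, and $L^p$ is complete.) I would then take a Littlewood--Paley decomposition in frequency, $1=\varphi_0(\xi)+\sum_{j\ge1}\psi(2^{-j}\xi)$ with $\psi$ supported in $\{\tfrac12\le|\xi|\le2\}$, and split $T_b=T_b^0+\sum_{j\ge1}T_b^j$ accordingly, with convolution kernels $K_0,K_j$. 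The low-frequency term is trivial: $K_0$ is Schwartz, so $\|T_b^0 a\|_{L^p}\le\|K_0\|_{L^p}\|a\|_{L^1}\lesssim1$.

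The analytic input is a stationary-phase analysis of $K_j$ for $j\ge1$. The phase $|\xi|+2\pi x\cdot\xi$ is critical along the sphere $|\xi|\sim2^j$ with Hessian of rank $n-1$ (the radial direction being degenerate but harmless because of the annular cutoff), so I expect the classical bound
\[
|\partial^\alpha K_j(x)|\ \lesssim_{L,\alpha}\ 2^{j|\alpha|}\,2^{j(\frac{n+1}{2}-b)}\,\bigl(1+2^j\bigl|\,|x|-\tfrac{1}{2\pi}\,\bigr|\bigr)^{-L}\,(1+|x|)^{-L},
\]
which says that $K_j$ concentrates, with amplitude $2^{j((n+1)/2-b)}$, in an $O(2^{-j})$-neighborhood of a fixed sphere of radius $\sim1$, decays rapidly off it, and each derivative costs a factor $2^j$; I would also record the Plancherel bound $\|T_b^j h\|_{L^2}\lesssim 2^{-jb}\|h\|_{L^2}$. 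The matter then reduces to summing $\sum_{j\ge1}\|T_b^j a\|_{L^p}$, which I would organize by comparing $2^{-j}$ with $r$.

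For the small scales $2^{-j}\gtrsim r$ — which occur only for $j\lesssim\log(1/r)$, hence not at all once $r\gtrsim1$ — I would use the cancellation $\int a=0$: writing $T_b^j a(x)=\int\bigl(K_j(x-y)-K_j(x)\bigr)a(y)\,dy$ and invoking the gradient bound gives $|T_b^j a(x)|\lesssim r\,2^{j}2^{j(\frac{n+1}{2}-b)}\,w_j(x)$ with $\|w_j\|_{L^p}\lesssim2^{-j/p}$, hence $\|T_b^j a\|_{L^p}\lesssim r\,2^{j(b_p+1-b)}$; summing over $j\lesssim\log(1/r)$ this is $\lesssim1$ precisely because $b\ge b_p$, the endpoint $b=b_p$ being the borderline value at which the geometric series is still $O(1)$. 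For the large scales $2^{-j}\lesssim r$ I would use that $T_b^j a$ is, up to a rapidly decaying tail that sums trivially, essentially supported on a set $E_r$ — a fixed-width neighborhood of a translate--dilate of the sphere — with $|E_r|\lesssim\max(r,r^{n-1})$, and combine two estimates on $E_r$: the pointwise estimate obtained from $\|a\|_{L^\infty}$ together with the fact that $B(x,r)$ can meet the $2^{-j}$-thin annulus carrying $K_j$ only in a set of measure $\lesssim 2^{-j}r^{n-1}$ (the normal direction being pinched to width $2^{-j}$), and the orthogonality estimate $\|T_b^j a\|_{L^2}\lesssim2^{-jb}\|a\|_{L^2}\lesssim2^{-jb}r^{-n/2}$ followed by H\"older on $E_r$. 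The first is effective for $p>1$, the second for $p\le2$ (in particular at $p=1$), so between them all $p\in[1,\infty]$ are handled, and in each case the resulting geometric series in $j$ sums to $\lesssim1$ — again exactly under $b\ge b_p$.

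The genuine difficulty lies entirely at the endpoint $b=b_p$: away from it the crude bounds $\|K_j\|_{L^1}\lesssim2^{j((n-1)/2-b)}$ and $\|K_j\|_{L^\infty}\lesssim2^{j((n+1)/2-b)}$ already yield summable series, but at $b=b_p$ one must exploit the sharp concentration of $K_j$ near the light cone and — according to $p$ — either the $L^\infty$-size of the atom or $L^2$-orthogonality, to keep the dyadic sum bounded. An alternative that perhaps streamlines the write-up is to prove by the atomic argument only the two corner cases $(p,b)=(1,\tfrac{n-1}2)$ and $(p,b)=(\infty,\tfrac{n+1}2)$, deduce $1<p<\infty$ at $b=b_p$ from Stein's interpolation theorem applied to the analytic family $z\mapsto T_z$ with symbol $(1+4\pi^2|\xi|^2)^{-z/2}e^{i|\xi|}$, and finally obtain $b>b_p$ for free by factoring $T_b=(1-\Delta)^{-(b-b_p)/2}\circ T_{b_p}$ and using the $L^p$-boundedness of the Bessel potential. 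Finally, the case $n=1$, where the sphere degenerates to two points, needs a short separate treatment, and the measure-theoretic bookkeeping for large atoms ($r\gtrsim1$) should be checked, but neither is a real obstacle.
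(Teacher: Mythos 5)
Your argument is essentially correct in outline, but it is a genuinely different route from the paper's: the paper does not reprove Theorem \ref{M_Hardy} at all. As explained in Remark \ref{miyachis-multiplier} and Section \ref{preliminaries}, the paper quotes Miyachi's Theorem 4.2 for the homogeneous-symbol operator $\widetilde{T}_b$ of \eqref{main-operator} and deduces the stated inhomogeneous version from the decomposition \eqref{Decomposition-operator}, $T_b f=\overline{T}_b f+\nu_b\ast\widetilde{T}_b f$, using only that $\overline{K}_b\in L^1\cap L^\infty$ and that $\nu_b$ (with $\widehat{\nu}_b(\xi)=(2\pi|\xi|)^b(1+4\pi^2|\xi|^2)^{-b/2}$) is a finite measure by Stein's lemma; this buys brevity and is appropriate since the theorem is background for the paper. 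You instead sketch a self-contained proof of the classical result: atomic decomposition of $\mathcal{H}^1$, dyadic frequency decomposition, stationary-phase bounds showing $K_j$ concentrates at height $2^{j(\frac{n+1}{2}-b)}$ in a $2^{-j}$-shell about a fixed sphere, cancellation of the atom for scales $2^{-j}\gtrsim r$, and either the $L^\infty$-size of the atom or $L^2$-orthogonality plus H\"older on the essential support for scales $2^{-j}\lesssim r$; your exponent bookkeeping at $b=b_p$ is right in both regimes, and this is in fact close in spirit to what the paper does later for its new results (compare Lemma \ref{kerne--esti-1}, Lemma \ref{LP-arguments} and Lemma \ref{lemma-small-cubes}, which run the same small-cube/large-cube dichotomy with $\beta$-atoms). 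Two small corrections to your sketch: for $r\gtrsim 1$ the essential support of $T_b^j a$ is a ball of measure $\sim r^n$, not $\max(r,r^{n-1})$ (the thin annuli centered at points of $B(0,r)$ fill the ball once $r$ exceeds the radius of the concentration sphere); the argument still closes because then $\|a\|_{L^2}\lesssim r^{-n/2}\lesssim 1$, so the $L^2$ bound $\|T_b^ja\|_{L^2}\lesssim 2^{-jb}r^{-n/2}$ followed by H\"older over that ball sums in $j$ for $p\le 2$, and for $p=\infty$ at $b=\tfrac{n+1}{2}$ one must likewise use $\|T_b^ja\|_\infty\le\|K_j\|_{L^2}\|a\|_{L^2}\lesssim 2^{-j/2}r^{-n/2}$ rather than $\|K_j\|_\infty\|a\|_1$, which is not summable at the endpoint. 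Also, in the paper's normalization the concentration sphere sits at $|x|=1/(2\pi)$ (Proposition \ref{wave-ker-esti} is stated with the singularity at $|x|=1$ in Miyachi's convention), but this is immaterial to the estimates.
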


For the space $L^1(\mathbb{R}^n)$ one has estimates, provided $b > b_p$:
\begin{theorem}\label{M_L1}
Let $p \in [1,\infty]$ and suppose $b > b_p$.  There exists a constant $C=C(n,p)$ such that 
    \begin{align*}
        \|T_{b}f\|_{L^p(\mathbb{R}^n)} \leq C \|f\|_{L^1(\mathbb{R}^n)} 
    \end{align*}
    for all $f \in L^1(\mathbb{R}^n)$.  
\end{theorem}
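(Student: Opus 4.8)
The plan is to reduce the theorem to an $L^p$-bound for the convolution kernel of $T_b$ and then invoke Young's inequality. Write $m_b(\xi) = (1+4\pi^2|\xi|^2)^{-b/2} e^{i|\xi|}$ and $K_b = \mathcal{F}^{-1}[m_b]$, a priori a tempered distribution. Since $m_b \in L^\infty$, for $f \in \mathcal{S}(\mathbb{R}^n)$ one has $T_b f = K_b * f$ in the sense of tempered distributions; hence, once we know $K_b \in L^p(\mathbb{R}^n)$,
\[
\|T_b f\|_{L^p(\mathbb{R}^n)} = \|K_b * f\|_{L^p(\mathbb{R}^n)} \le \|K_b\|_{L^p(\mathbb{R}^n)}\,\|f\|_{L^1(\mathbb{R}^n)},
\]
and the estimate passes to all of $L^1(\mathbb{R}^n)$ by density. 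So the whole statement reduces to the implication $b > b_p \implies K_b \in L^p(\mathbb{R}^n)$.

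To control $K_b$, decompose $m_b$ with a Littlewood–Paley partition of unity $1 = \chi_0(\xi) + \sum_{j\ge 1}\psi(2^{-j}\xi)$, with $\chi_0$ supported in $\{|\xi|\le 2\}$ and $\psi$ in $\{1/2\le|\xi|\le 2\}$. The low-frequency part $\mathcal{F}^{-1}[\chi_0 m_b]$ is Schwartz and lies in every $L^p$. For the dyadic pieces
\[
K_b^j(x) = \int_{\mathbb{R}^n}\psi(2^{-j}\xi)\,(1+4\pi^2|\xi|^2)^{-b/2}\,e^{i(|\xi| + 2\pi x\cdot\xi)}\,d\xi ,
\]
the phase $|\xi|+2\pi x\cdot\xi$ has a critical point on $\{|\xi|\sim 2^j\}$ only when $|x|$ is comparable to $(2\pi)^{-1}$, the critical manifold being the light cone $\{|x|=(2\pi)^{-1}\}$, where the Hessian transverse to the cone is nondegenerate of rank $n-1$. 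Non-stationary phase (repeated integration by parts) gives, for any $N$ and for $x$ bounded away from the cone,
\[
|K_b^j(x)| \lesssim_N 2^{j(n-b)}\bigl(1 + 2^j\,\bigl|\,|x|-(2\pi)^{-1}\,\bigr|\bigr)^{-N},
\]
while the stationary-phase expansion near the cone (equivalently the asymptotics of $\widehat{d\sigma}$ on the sphere) yields the gain $|K_b^j(x)| \lesssim 2^{j(n-b)}2^{-j(n-1)/2} = 2^{j(\frac{n+1}{2}-b)}$. For $|x|$ large the analytic Bessel-potential factor forces rapid decay of $K_b$, and for small $|x|$ away from the cone the non-stationary bound applies. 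Summing over $j$ — the geometric sum being governed by the scales $2^j \lesssim |\,|x|-(2\pi)^{-1}\,|^{-1}$ — produces
\[
|K_b(x)| \lesssim \bigl|\,|x|-(2\pi)^{-1}\,\bigr|^{\,b-\frac{n+1}{2}} \ (b<\tfrac{n+1}{2}), \qquad |K_b(x)|\lesssim 1+\log_+\tfrac{1}{|\,|x|-(2\pi)^{-1}\,|}\ (b=\tfrac{n+1}{2}),
\]
with rapid decay of $K_b$ off a fixed neighbourhood of the cone, and $K_b\in L^\infty$ with rapid decay when $b>\frac{n+1}{2}$.

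It remains to integrate these bounds. The only possible obstruction to $K_b\in L^p$ is the singularity along the light cone: writing $\delta = |\,|x|-(2\pi)^{-1}\,|$, the tube $\{\delta<c\}$ contributes
\[
\int_{\delta<c}|K_b|^p \lesssim \int_0^c \delta^{-p(\frac{n+1}{2}-b)}\,d\delta < \infty \iff p\Bigl(\tfrac{n+1}{2}-b\Bigr)<1 \iff b > \tfrac{n+1}{2}-\tfrac1p = b_p,
\]
(with the logarithmic case and the case $b>\frac{n+1}{2}$ trivially fine for every $p$, subject to the appropriate strict inequality at $p=\infty$), while the complementary region contributes a finite amount by the rapid decay. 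Hence $K_b\in L^p(\mathbb{R}^n)$ whenever $b>b_p$, which proves the theorem. The strict inequality is essential: at $b=b_p$ the integrand is $\sim\delta^{-1}$, non-integrable — this is precisely the endpoint lost relative to Theorem \ref{M_Hardy}, where the vanishing moment of an $\mathcal{H}^1$-atom supplies the extra cancellation that recovers $b=b_p$.

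The main obstacle is the dyadic kernel analysis: extracting the sharp exponent $2^{j(\frac{n+1}{2}-b)}$ uniformly in $j$, localizing correctly around the light cone, and handling the three regimes $|x|\to 0$, $|x|\approx(2\pi)^{-1}$, $|x|\to\infty$. This is classical stationary-phase / Bessel-function work in the spirit of Miyachi and Peral; once it is in place, the passage to $L^1\to L^p$ is just Young's inequality and a one-variable integration.
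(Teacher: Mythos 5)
Your proposal is correct in substance but follows a genuinely different route from the paper. The paper does not reprove the kernel analysis at all: as explained in Remark \ref{miyachis-multiplier} and Section \ref{preliminaries}, it quotes Miyachi's Theorem 4.2 for the truncated homogeneous multiplier $\widetilde{T}_{b}$ of \eqref{main-operator} and transfers the bound to $T_b$ through the decomposition \eqref{Decomposition-operator}, using that $\overline{K}_b\in L^1\cap L^\infty$ and that $\nu_b$ is a finite measure, so the inhomogeneous symbol is handled by composition rather than by direct estimation. You instead rework the classical Miyachi--Peral argument from scratch: dyadic decomposition of the symbol, stationary/non-stationary phase giving $|K_b^j(x)|\lesssim 2^{j(\frac{n+1}{2}-b)}(1+2^j\,|\,|x|-(2\pi)^{-1}|)^{-N}$, summation to obtain the cone singularity $|\,|x|-(2\pi)^{-1}|^{\,b-\frac{n+1}{2}}$ (which is exactly the content of Proposition \ref{wave-ker-esti} for the truncated operator), and then Young's inequality $\|K_b\ast f\|_{L^p}\le\|K_b\|_{L^p}\|f\|_{L^1}$, with the strict inequality $b>b_p$ entering precisely to make $\delta\mapsto\delta^{-p(\frac{n+1}{2}-b)}$ integrable near the cone. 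This buys a self-contained proof that also explains transparently why the endpoint $b=b_p$ is lost for $L^1$ data, at the cost of redoing oscillatory-integral work the paper outsources. One slip to fix: the low-frequency part $\mathcal{F}^{-1}[\chi_0 m_b]$ is \emph{not} Schwartz, since $e^{i|\xi|}$ fails to be smooth at $\xi=0$; it is nonetheless in $L^1\cap L^\infty$ (hence in every $L^p$), which one sees for instance by writing $\chi_0 m_b$ as a smooth compactly supported symbol plus a term of size $O(|\xi|)$ with derivative bounds $O(|\xi|^{1-|\alpha|})$ and decomposing dyadically near the origin to get decay of order $|x|^{-(n+1)}$; this is the analogue of the paper's remark that $\overline{K}_b\in L^1\cap L^\infty$, and it is all your argument actually needs.
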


\begin{remark}\label{miyachis-multiplier}
More precisely, in \cite[Theorem 4.2 I-i,iii on p.~284]{Miyachi-singular} Miyachi establishes analogous inequalities for the multiplier operator
\begin{align}\label{main-operator}
    \widetilde{T}_{b}f(x)=\int_{\mathbb{R}^n} \psi(\xi)(2\pi|\xi|)^{-b}e^{i|\xi|} \widehat{f}(\xi) e^{2\pi i \xi\cdot x} \, d\xi,
\end{align}
where $\psi \in C^\infty(\mathbb{R}^n)$ satisfies $\psi \equiv 0$ on $B(0,1)$ and $\psi \equiv 1$ outside $B(0,2)$.   Theorems \ref{M_Hardy} and \ref{M_L1} follow from these results and standard convolution estimates, see the discussion below in Section \ref{preliminaries}.
\end{remark}

The results in \cite{Miyachi-wave,Miyachi-singular,Peral-80} are a part of the broader body of work on estimates for oscillatory Fourier multipliers, including for solutions of the wave equation on compact manifolds \cite{Seeger-Sogge-Stein,CFS}, the Heisenberg group \cite{Muller-Stein,Muller-Seeger}, solvable Lie groups \cite{Muller-Thiele, WangYan}, Grushin operators \cite{JT-Pisa}, Hermite operators \cite{Narayanan-Thangavelu,BDHH-IMRN}, twisted Laplacians \cite{Basak-J},  as well as other oscillatory multiplier estimates \cite{BCGW-Revista,Bui-Anconca-Duong,BDN-Revista-2020,MMN} and related maximal operators \cite{Rogers-Villaroya,Cho-Lee-Li,Kinoshita-Ko-Shiraki,Ko-Lee-Shiraki}.  The study of such estimates are of interest in their own right, though have a special relevance in physics because of their application to the wave equation, a point we return to in the sequel.

It is interesting to notice the discrepancy between the assumption $b \geq b_p$ for functions in the Hardy space $\mathcal{H}^1(\mathbb{R}^n)$ and $b > b_p$ for functions in $L^1(\mathbb{R}^n)$, in particular in light of the recent results concerning Riesz potentials acting on constrained subspaces of $L^1(\mathbb{R}^n;\mathbb{R}^k)$ \cite{ASW,GRVS, HRS,HS, HP, RSS, SSVS, Spector, Spector-Stolyarov, Stolyarov, Stolyarov-1,VanSchaftingen_2013 }.  A scalar structure underlying this phenomenon has been introduced in \cite{Spector-Stolyarov} in terms of the spaces of $\beta$-dimension stable measures $DS_\beta(\mathbb{R}^n)$, $\beta \in (0,n]$.  The spaces $DS_\beta(\mathbb{R}^n)$ are intermediate between the Hardy space and the space of finite Radon measures:  For $\beta \in (0,n)$,
\begin{align}
   \mathcal{H}^1(\mathbb{R}^n) = DS_n(\mathbb{R}^n)\subsetneq DS_\beta(\mathbb{R}^n) \subsetneq M_b(\mathbb{R}^n).
\end{align} 
They support Sobolev inequalities, e.g. 
 \begin{align}\label{sobolev}
        \|I_\alpha \mu\|_{L^{n/(n-\alpha)}(\mathbb{R}^n)} \leq C \|\mu\|_{DS_\beta(\mathbb{R}^n)},
    \end{align}
for all $\mu \in DS_\beta(\mathbb{R}^n)$, and also Besov-Lorentz and trace improvements, see \cite[Theorem A, B on p.~4]{Spector-Stolyarov}).
Here
\begin{align*}
I_\alpha \mu(x) := \frac{1}{\gamma(\alpha)} \int_{\mathbb{R}^n} \frac{d\mu(y)}{|x-y|^{n-\alpha}}
\end{align*}
is the Riesz potential of order $\alpha \in (0,n)$ of a Radon measure $\mu$ for a suitable normalization constant $\gamma(\alpha)$ (see, e.g. \cite[p.~117]{S}).  
The inequalities \eqref{sobolev} can be seen as refinements of a classical inequality of Stein and Weiss \cite[Theorem G on p.~60]{SteinWeissHp}:
\begin{align}
        \|I_\alpha f\|_{L^{n/(n-\alpha)}(\mathbb{R}^n)} \leq C \|f\|_{\mathcal{H}^1(\mathbb{R}^n)}, 
\end{align}
for all $f \in \mathcal{H}^1(\mathbb{R}^n)$, while the lower bound of the Hausdorff dimension established in \cite[Theorem H on p.~6]{Spector-Stolyarov}:
\begin{align*}
\operatorname*{dim_\mathcal{H}} \mu := \sup \left\{ \gamma \in (0,n] \colon \mathcal{H}^\gamma(E)=0 \implies |\mu|(E)=0 \right\} \geq \beta
\end{align*}
for all $\mu \in DS_\beta(\mathbb{R}^n)$ is analogous to Stein and Weiss's result for the Hardy space \cite[Theorem E on p.~53]{SteinWeissHp}.
We refer the reader to Section \ref{preliminaries} below for the definition of the spaces $DS_\beta(\mathbb{R}^n)$.

The sharpening of results for the Riesz potentials in the inequality \eqref{sobolev} suggests the possibility of corresponding results for multipliers or more general convolution operators.  In this paper we obtain the first such estimates outside the elliptic setting.  First, when $p\geq 2$, we have the following result up to the endpoint $b=b_p$ with no further restrictions on $\beta \in (0,n]$.
\begin{theorem}\label{Thm-p-bigger-2}
Let $p\in [2, \infty)$, and suppose $b \geq b_p$.  For $\beta\in (0, n]$, there exists a constant $C=C(n,p, \beta)$ such that 
    \begin{align*}
        \|T_{b}\mu\|_{L^p(\mathbb{R}^n)} \leq C \|\mu\|_{DS_{\beta}(\mathbb{R}^n)} 
    \end{align*}
    for all $\mu \in DS_{\beta}(\mathbb{R}^n)$.  
\end{theorem}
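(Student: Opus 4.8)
The plan is to reduce to Miyachi's model operator, prove the estimate at the two endpoints $p=2$ (where $b_2=\tfrac n2$) and $b=\tfrac{n+1}2$, and interpolate; the second endpoint is the genuinely new ingredient. \emph{Reductions.} By Remark~\ref{miyachis-multiplier} and the discussion in Section~\ref{preliminaries}, the difference $T_b-\widetilde T_b$ is convolution with a kernel in $L^1(\mathbb R^n)\cap L^\infty(\mathbb R^n)$, hence maps $M_b(\mathbb R^n)$—and so $DS_\beta(\mathbb R^n)$—into $L^p(\mathbb R^n)$; thus it suffices to prove the bound for $\widetilde T_b$ of \eqref{main-operator} in place of $T_b$. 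Moreover it is enough to take $b=b_p$: for $b>b_p$ one has $T_b=(1-\Delta)^{-(b-b_p)/2}\circ T_{b_p}$, and $(1-\Delta)^{-s/2}$ with $s>0$ is convolution with an $L^1$ kernel, hence bounded on $L^p$.

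\emph{The endpoint $p=2$.} Since $b_2=\tfrac n2$, Plancherel gives $\|T_{b_2}\mu\|_{L^2}^2=\int_{\mathbb R^n}(1+4\pi^2|\xi|^2)^{-n/2}|\widehat\mu(\xi)|^2\,d\xi$. On $\{|\xi|\le1\}$ the integrand is $\le|\widehat\mu(\xi)|^2\le\|\mu\|_{M_b}^2\le C\|\mu\|_{DS_\beta}^2$, contributing $O(\|\mu\|_{DS_\beta}^2)$. On $\{|\xi|>1\}$ one has $(1+4\pi^2|\xi|^2)^{-n/2}\le C|\xi|^{-n}$, and $\int_{\mathbb R^n}|\xi|^{-n}|\widehat\mu(\xi)|^2\,d\xi=c\,\|I_{n/2}\mu\|_{L^2}^2\le C\|\mu\|_{DS_\beta}^2$ by the Sobolev inequality \eqref{sobolev} with $\alpha=\tfrac n2$ (note $n/(n-\alpha)=2$). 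This argument uses only the modulus of the multiplier, so it holds uniformly for the analytic family $T_z$ (with symbol $(1+4\pi^2|\xi|^2)^{-z/2}e^{i|\xi|}$) along $\operatorname{Re}z=\tfrac n2$.

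\emph{The endpoint $b=\tfrac{n+1}2$ and interpolation.} One cannot take $L^\infty$ here: $\widetilde K_{(n+1)/2}$ has a borderline logarithmic singularity across the sphere $\Sigma$ on which it is singular (a fixed dilate of $\mathbb S^{n-1}$), and $\widetilde T_{(n+1)/2}$ applied to surface measure on a sphere—an element of $DS_\beta(\mathbb R^n)$ for $\beta\le n-1$—is unbounded (though it does lie in $\mathrm{BMO}$). I would therefore prove instead $\widetilde T_{(n+1)/2}\colon DS_\beta(\mathbb R^n)\to\mathrm{BMO}(\mathbb R^n)$. Using the structural (atomic) description of $DS_\beta$ from \cite{Spector-Stolyarov}, it suffices to bound the mean oscillation of $\widetilde T_{(n+1)/2}a$ over all balls uniformly over the building blocks $a$ of $DS_\beta$ (a measure living at one scale $r$ about a point $x_0$, carrying the $\beta$-dimensional density normalization and the attendant cancellation). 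Splitting $\mathbb R^n$ by the distance to the translated sphere $x_0+\Sigma$: away from it $\widetilde K_{(n+1)/2}$ is smooth, bounded and rapidly decaying, so that part contributes a bounded function; near it the logarithmic kernel singularity is played against the $\beta$-dimensional density of $a$ and the curvature of $x_0+\Sigma$ to control the oscillation of $\widetilde K_{(n+1)/2}*a$ on balls. Granting this together with the $p=2$ bound, the range $p\in(2,\infty)$ follows by interpolating the two estimates: applying Stein's interpolation theorem to $\{T_z\}$ on the strip $\tfrac n2\le\operatorname{Re}z\le\tfrac{n+1}2$, with the identification $[L^2,\mathrm{BMO}]_\theta=L^p$ ($\tfrac1p=\tfrac{1-\theta}2$), gives $T_{z_\theta}\colon DS_\beta\to L^p$; since $z_\theta=(1-\theta)\tfrac n2+\theta\tfrac{n+1}2=\tfrac{n+\theta}2=b_p$, this is exactly Theorem~\ref{Thm-p-bigger-2} for $p\in(2,\infty)$, and with the case $p=2$ and the reductions above the proof is complete.

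\emph{Main obstacle.} The crux is the near-$\Sigma$ part of the $\mathrm{BMO}$ endpoint: bounding the mean oscillation of $\widetilde K_{(n+1)/2}*a$ over balls by a constant depending only on $n$ and $\beta$, uniformly over all building blocks $a$ and all balls, when $\widetilde K_{(n+1)/2}$ is only logarithmically integrable across $x_0+\Sigma$ and $a$ may concentrate on an $(n-1)$-dimensional surface tangent to $x_0+\Sigma$. Equivalently one can carry out the $p\in(2,\infty)$ step directly, splitting $\widetilde K_{b_p}=\sum_{j\ge0}\widetilde K_{b_p}^{(j)}$ into pieces at distance $\sim2^{-j}$ from $x_0+\Sigma$ and proving $\|\widetilde K_{b_p}^{(j)}*\mu\|_{L^p}\lesssim 2^{-\delta j}\|\mu\|_{DS_\beta}$ for some $\delta=\delta(n,\beta,p)>0$: Young's inequality alone gives no decay in $j$, and the gain must be extracted by interpolating the trivial $L^\infty$ and $L^1$ bounds for $\widetilde K_{b_p}^{(j)}*$ against the $p=2$ estimate while using the $\beta$-dimensional density of $\mu$—this is where $p\ge2$ enters, the complementary range requiring interpolation against $\mathcal H^1$ or $L^1$, where no $\beta$-uniform endpoint estimate is available.
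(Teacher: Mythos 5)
Your skeleton (reduce to $b=b_p$, prove an $L^2$ bound at $b=\tfrac n2$ via Plancherel and the Riesz-potential Sobolev inequality \eqref{sobolev}, prove a $\mathrm{BMO}$ bound at $b=\tfrac{n+1}2$, and run Stein interpolation for the analytic family) is exactly the paper's strategy, and your $p=2$ endpoint matches Proposition \ref{prop-L2-gamma-1}. But there is a genuine gap at the point you yourself flag as the ``main obstacle'': you leave the $\mathrm{BMO}$ endpoint unproven, proposing a delicate analysis near the singular sphere in which the logarithmic kernel singularity is played against the $\beta$-dimensional density of the atom. No such estimate is needed, and the way you set it up you would be proving a harder statement than required. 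Since $\beta$-atoms satisfy $|a|(\mathbb{R}^n)\le 1$, one has $DS_\beta(\mathbb{R}^n)\hookrightarrow L^1(\mathbb{R}^n)$ with $\|\mu\|_{L^1}\lesssim\|\mu\|_{DS_\beta}$, and Miyachi's classical endpoint result (Theorem 4.2 of \cite{Miyachi-singular}) already gives $\widetilde T_{(n+1)/2}\colon L^1(\mathbb{R}^n)\to \mathrm{BMO}(\mathbb{R}^n)$; combined with the decomposition \eqref{Decomposition-operator} this yields Proposition \ref{prop-BMO-gamma-1} with no use of the $\beta$-dimensional structure at all. In other words, the $DS_\beta$ hypothesis enters only through the $L^2$ endpoint, and the ``crux'' you describe is a solved classical problem, not a new ingredient; as written, your proof is incomplete precisely there, and your sketch (oscillation of $\widetilde K_{(n+1)/2}\ast a$ near $x_0+\Sigma$ controlled by density and curvature) is not carried out.

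A second, smaller gap is the interpolation step itself. You invoke ``Stein's interpolation theorem with $[L^2,\mathrm{BMO}]_\theta=L^p$'' as if it were off the shelf, but for an analytic family with a $\mathrm{BMO}$ endpoint this requires justification: one must linearize the Fefferman--Stein sharp maximal function in a measurable way (this is Lemma \ref{pointwise_approximation} and the maps $U_\ell$ in the paper), verify admissibility of $z\mapsto\int \mathcal T_z f\,\overline{h_z}$, control the growth in $\operatorname{Im}z$ of $(I-\Delta)^{it/2}$ on $L^2$ and on $\mathrm{BMO}$ (Mihlin--H\"ormander gives polynomial growth, which Hirschman's lemma tolerates), and then pass from the sharp function back to $\|T_{b_p}f\|_{L^p}$ and from simple functions to atoms via Fatou. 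None of this is fatal, but your proposal asserts the conclusion without these steps, and they constitute a nontrivial part of the paper's proof. Your reductions (to $b=b_p$ via convolution with an $L^1$ kernel, and the $p=2$ computation) are fine.
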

When $1\leq p<2$, the cancellation present in the operator is more subtle and we impose further assumptions.  Our main result is

\begin{theorem}\label{Thm-p-less-2}
    Let $n\geq 3$, $p\in [1, 2)$, and $b\geq b_p$.  For $\beta \in (n-1,n]$, there exists a constant $C=C(n,p, \beta)$ such that 
    \begin{align*}
        \|T_{b}\mu\|_{L^p(\mathbb{R}^n)} \leq C \|\mu\|_{DS_{\beta}(\mathbb{R}^n)} 
    \end{align*}
    for all $\mu \in DS_{\beta}(\mathbb{R}^n)$.  
\end{theorem}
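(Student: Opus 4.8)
The plan is to reduce, by soft arguments, to Miyachi's operator $\widetilde{T}_{b_p}$ at the critical exponent acting on a single atom, to treat the part of the kernel away from the light cone by the usual cancellation argument, and to treat the part near the light cone by passing to the Radon transform of the atom and invoking a one-dimensional fractional-integral estimate, in which the dimensional hypothesis $\beta>n-1$ is the decisive input. Concretely: since $T_{b}=(1-\Delta)^{-(b-b_p)/2}T_{b_p}$ and $(1-\Delta)^{-(b-b_p)/2}$ is convolution with an $L^{1}$ kernel when $b\ge b_p$, it suffices to prove the estimate for $b=b_p$; by Remark~\ref{miyachis-multiplier} (the difference $T_{b_p}-\widetilde{T}_{b_p}$ being convolution with a kernel lying in every $L^{q}$, $1\le q\le\infty$) together with $DS_\beta(\mathbb{R}^n)\hookrightarrow M_b(\mathbb{R}^n)$, it suffices to bound $\|\widetilde{T}_{b_p}\mu\|_{L^p}$; and invoking the atomic characterization of $DS_\beta(\mathbb{R}^n)$ from Section~\ref{preliminaries}, the triangle inequality in $L^p$ (legitimate since $p\ge1$), and the fact that $\widetilde{T}_{b_p}$ is a Fourier multiplier with bounded symbol (hence $L^2$-bounded and continuous on $\mathcal{S}'(\mathbb{R}^n)$, which justifies summing the decomposition), the theorem reduces to the uniform estimate $\|\widetilde{T}_{b_p}a\|_{L^p(\mathbb{R}^n)}\le C$ over all $\beta$-atoms $a$. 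By translation invariance we may assume $\operatorname{supp}a\subseteq B(0,r)$; in the normalization of Section~\ref{preliminaries} such an atom has $\int a=0$ together with $\|a\|_{L^\infty}\lesssim r^{-\beta}$ and hence $\|a\|_{L^1}\lesssim r^{n-\beta}$, and it is through the $L^\infty$ bound that $\beta$ enters. We may further reduce to $r\lesssim1$, the atoms of scale $r\gtrsim1$ — if present — being disposed of using the rapid decay of the kernel away from $\mathbb{S}^{n-1}$ and, for $p=1$, the mean-zero condition.

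\emph{Away from the light cone.} Write $\widetilde{T}_{b_p}a=\widetilde{K}_{b_p}*a$; Miyachi's kernel is smooth and rapidly decreasing for $|x|$ bounded away from $1$, while near $\mathbb{S}^{n-1}$ one has $|\partial^{\gamma}\widetilde{K}_{b_p}(x)|\lesssim\bigl|\,|x|-1\,\bigr|^{-1/p-|\gamma|}$ for every multi-index $\gamma$. Because $\operatorname{supp}a\subseteq B(0,r)$, the only region where $\widetilde{T}_{b_p}a$ can be large is the shell $S_r=\{\,\bigl|\,|x|-1\,\bigr|\le2r\,\}$. Outside $S_r$ I would use $\int a=0$ to write $\widetilde{T}_{b_p}a(x)=\int\bigl(\widetilde{K}_{b_p}(x-y)-\widetilde{K}_{b_p}(x)\bigr)a(y)\,dy$, bound the integrand by $r\,\bigl|\,|x|-1\,\bigr|^{-1/p-1}|a(y)|$ near the sphere (and use the rapid decay for $|x|\le\tfrac12$, $|x|\ge2$), and integrate in coordinates adapted to $\mathbb{S}^{n-1}$: at the endpoint the divergent one-dimensional integral $\int_{2r}^{1}t^{-1-p}\,dt\approx r^{-p}$ is exactly balanced by the $r^p$ gained from cancellation, producing a contribution $\lesssim\|a\|_{L^1}\lesssim r^{n-\beta}\le1$.

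\emph{Near the light cone.} On $S_r$ the kernel is not locally $p$-integrable and cancellation alone does not suffice — this is the heart of the matter. Here I would localize $\widetilde{K}_{b_p}$ to a neighbourhood of $\mathbb{S}^{n-1}$ and replace it by its leading singular profile — of size $\bigl|\,|x|-1\,\bigr|^{-1/p}$ across $\mathbb{S}^{n-1}$ times a smooth amplitude, the remainder being more regular and handled as above — then pass to the coordinates $x=(1+s)\omega$, $\omega\in\mathbb{S}^{n-1}$, $|s|\lesssim r$, and use the expansion $|x-y|-1=s-\omega\cdot y+O(r^2)$ for $|y|\le r$. This exhibits $\widetilde{T}_{b_p}a\bigl((1+s)\omega\bigr)$, up to controllable errors, as the one-dimensional fractional integral of order $1-\tfrac1p$ of the Radon transform $g_\omega(t):=\int_{\{\omega\cdot y=t\}}a\,d\mathcal{H}^{n-1}$, which inherits $\int g_\omega=0$, $\operatorname{supp}g_\omega\subseteq[-r,r]$, $\|g_\omega\|_{L^1}\lesssim r^{n-\beta}$ and $\|g_\omega\|_{L^\infty}\lesssim r^{n-1-\beta}$. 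One then estimates this one-dimensional fractional integral on $[-r,r]$ from the mean-zero property and the two size bounds, and integrates the outcome over $\omega\in\mathbb{S}^{n-1}$; the critical choice $b=b_p$ makes the relevant one-dimensional integrals only logarithmically divergent, and the logarithm is absorbed by a power of $r$.

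I expect this last step to be the main obstacle, the decisive configuration being an atom concentrated in a thin slab transverse to a fixed direction $\omega_0$: then $\widetilde{T}_{b_p}a$ focuses near the corresponding sheet of the cone, and for $p<2$ this is precisely what threatens the $L^p$ bound. It is exactly here that $\beta\in(n-1,n]$ is used — equivalently, that a measure in $DS_\beta$ has Hausdorff dimension at least $\beta>n-1$ and so cannot charge a hypersurface — via the bound $\|a\|_{L^\infty}\lesssim r^{-\beta}$, hence $\|g_\omega\|_{L^\infty}\lesssim r^{n-1-\beta}$: this limits how concentrated $g_\omega$ can be for the cone of directions $\omega$ clustered about $\omega_0$, which keeps the contribution of that cone of directions summable; the restriction $n\ge3$ is used in the kernel analysis and the one-dimensional reduction. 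For $p\ge2$ (Theorem~\ref{Thm-p-bigger-2}) the same reduction closes instead by an $L^2$/Plancherel argument transverse to the cone, which needs no restriction on $\beta$; what is new for $1\le p<2$ is that this $L^2$ input is unavailable and must be replaced by the dimensional hypothesis.
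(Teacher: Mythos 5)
Your reductions (to $b=b_p$, to $\widetilde{T}_{b_p}$, to a uniform bound on single atoms) match the paper, but there are two genuine gaps. First, your quantitative input on the atom is wrong: a $\beta$-atom is a measure satisfying $\operatorname*{supp}a\subset Q$, $a(Q)=0$, $|a|(\mathbb{R}^n)\le 1$ and the \emph{heat-extension} bound $\sup_{t>0}t^{(n-\beta)/2}\|e^{t\Delta}a\|_{L^\infty}\le \ell(Q)^{-\beta}$; it does \emph{not} satisfy $\|a\|_{L^\infty}\lesssim \ell(Q)^{-\beta}$ (for $\beta<n$ the prefactor $t^{(n-\beta)/2}$ vanishes as $t\to0$, so no pointwise bound follows, and the smooth atoms of Proposition \ref{smooth_atoms} are mollified at arbitrarily fine scales), nor the improved mass bound $\|a\|_{L^1}\lesssim \ell(Q)^{n-\beta}$ — the definition gives only $|a|(\mathbb{R}^n)\le1$. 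Since your entire near-light-cone argument runs through $\|a\|_{L^\infty}\lesssim r^{-\beta}$ and the induced bounds $\|g_\omega\|_{L^\infty}\lesssim r^{n-1-\beta}$, $\|g_\omega\|_{L^1}\lesssim r^{n-\beta}$ on the Radon transform (itself not obviously meaningful for a measure with only heat-extension control), that step is unsupported as written; it is also, by your own account, only a sketch. For comparison, the paper closes the near-cone region for small cubes by H\"older on the thin shell $\{||x|-1|\le 2\sqrt n\,\ell(Q)\}$ followed by Plancherel, reducing to $\|I_{b_p}a\|_{L^2}\lesssim \ell(Q)^{1/2-1/p}$ (Lemma \ref{Riesz-potential-esti}), which is proved from the heat-kernel representation of $I_{2b_p}$ and the $L^1$/$L^\infty$ heat-extension estimates of Spector--Stolyarov, and needs only $\beta>\frac{n-1}{2}+\frac1p$ — so, contrary to your closing remark, an $L^2$/Plancherel input is exactly what is used for $p<2$ near the cone.

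Second, the large atoms cannot be ``disposed of using the rapid decay of the kernel away from $\mathbb{S}^{n-1}$ and, for $p=1$, the mean-zero condition.'' At the critical exponent $K_{b_p}$ behaves like $||x|-1|^{-1/p}$ near the unit sphere, so it fails (logarithmically) to be locally in $L^p$; for $\ell(Q)\gg1$ the cancellation operates only at scale $\ell(Q)$ and gives nothing against this local singularity, and a unit-mass mean-zero measure spread over a large cube can still be concentrated near a thin slab strongly enough that mass and mean-zero alone do not yield a uniform $L^p$ bound — ruling this out requires the non-concentration encoded in the heat-extension condition. This is precisely where the paper uses $\beta>n-1$: Lemma \ref{LP-arguments} combines the Littlewood--Paley kernel decay of Lemma \ref{kerne--esti-1} with the bound $\|\widetilde{P}_j a\|_{L^\infty}\lesssim 2^{j(n-\beta)}\ell(Q)^{-\beta}$, and the geometric series converges only for $\beta>n-1$. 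So you have mislocated the role of the hypothesis $\beta\in(n-1,n]$ (it comes from the large-cube/Littlewood--Paley part when $n\ge3$, not from the small-atom near-cone analysis), and the case you dismissed is the one carrying the main restriction.
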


When $n= 2$ we have the following result.

\begin{theorem}\label{Thm-p-less-2-2}
    Let $n=2$, $p\in (1, 2)$, and $b\geq b_p$.  For $\beta \in (\frac{2}{p},2]$, there exists a constant $C=C(\beta,p)$ such that 
    \begin{align*}
      \|T_{b}\mu\|_{L^p(\mathbb{R}^2)} \leq C \|\mu\|_{DS_{\beta}(\mathbb{R}^2)} 
   \end{align*}
   for all $\mu \in DS_{\beta}(\mathbb{R}^2)$.  
\end{theorem}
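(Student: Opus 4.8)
The plan is to reduce the inequality to a uniform bound on a single atom of $DS_\beta(\mathbb{R}^2)$ and then to run a Littlewood--Paley analysis of the Miyachi multiplier in which the cancellation of the atom and its $\beta$-dimensional concentration are exploited in complementary frequency ranges.

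By the atomic decomposition of $DS_\beta(\mathbb{R}^2)$ from \cite{Spector-Stolyarov} together with the triangle inequality in $L^p$, it suffices to prove $\|T_b a\|_{L^p(\mathbb{R}^2)}\leq C$ uniformly over all $\beta$-atoms $a$: finite signed measures supported in some ball $B(x_0,r)$ with $\int d a = 0$ and $|a|(B(x,s))\leq (s/r)^\beta$ for all $x\in\mathbb{R}^2$ and $0<s\leq r$ (so in particular $|a|(\mathbb{R}^2)\leq 1$). A translation puts $x_0=0$. Since the phase $e^{i|\xi|}$ breaks the dilation symmetry of the multiplier, the radius $r\in(0,\infty)$ cannot be normalized away and must be carried through the whole argument; what is required is a bound independent of $r$. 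By Remark \ref{miyachis-multiplier} one may work with $\widetilde T_b$, the Bessel-type remainder being harmless by Young's inequality. Write $\widetilde T_b=\sum_{j\geq 0}\widetilde T_b^{(j)}$ with $\widetilde T_b^{(j)}$ having multiplier supported where $|\xi|\sim 2^j$; stationary phase gives the kernel bound $|K_j(x)|\leq C_N\,2^{j(\frac{n+1}{2}-b)}(1+2^j\big||x|-1\big|)^{-N}$ for $|x|\gtrsim 1$, with faster decay for $|x|$ near $0$ or $|x|\geq 2$. Here $n=2$, so the relevant spherical-decay exponent is $\frac{n-1}{2}=\frac12$, the smallest possible; this is the root of the extra hypothesis. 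Split the sum at the scale of the atom into the low frequencies $2^j\lesssim 1/r$ and the high frequencies $2^j\gtrsim 1/r$.

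For the low frequencies one uses only the cancellation of $a$, namely $|\widehat a(\xi)|\leq C\,r|\xi|$. Combining this with Plancherel, $\|\widetilde T_b^{(j)}a\|_{L^2}\leq \|m_j\|_{L^\infty}\|\widehat a\|_{L^2(|\xi|\sim 2^j)}$, and Hölder's inequality on the output annulus $\{x:\big||x|-1\big|\lesssim 2^{-j}\}$, of measure $\sim 2^{-j}$, one obtains at the endpoint $b=b_p$ a bound for $\|\widetilde T_b^{(j)}a\|_{L^p(\mathbb{R}^2)}$ whose terms form a geometric series in $j$ summing to $O(1)$ uniformly in $r$; no restriction on $\beta$ is needed here. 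For the high frequencies one instead uses the $\beta$-dimensional structure of $a$ through a Frostman-type energy estimate for the truncated Fourier transform, of the shape $\int_{|\xi|\sim 2^j}|\widehat a(\xi)|^2\,d\xi\leq C\,2^{j(2-\beta)}r^{-\beta}$ for $\beta\in(0,\tfrac32)$, with the appropriate modification when $\beta\in[\tfrac32,2]$; this again feeds into Plancherel and into Hölder's inequality, now on the thicker output annulus $\{x:\big||x|-1\big|\lesssim r\}$, of measure $\sim r$. Finally, the contribution of $T_b a$ at distance $\gg r$ from that annulus is estimated directly from the pointwise decay of the full kernel of $T_b$ near the unit sphere together with the cancellation of $a$, yielding $O(1)$ at $b=b_p$ with no restriction on $\beta$.

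The decisive step --- and the expected main obstacle --- is the high-frequency sum at the endpoint $b=b_p$. After the manipulations above each dyadic block contributes, uniformly in $r$, a factor $2^{-j\delta(p,\beta)}$ with $\delta(p,\beta)$ an affine function of $\beta$ and $1/p$; in dimension $2$ one has $\delta(p,\beta)>0$ precisely when $\beta>2/p$, while at $\beta=2/p$ the sum diverges logarithmically. This is exactly why the theorem requires $\beta\in(2/p,2]$, and why $p$ must be strictly larger than $1$ (otherwise $2/p\geq 2$ and no admissible $\beta$ remains). The same scheme, carried out with the larger spherical-decay exponent $\frac{n-1}{2}$ available when $n\geq 3$, gives Theorem \ref{Thm-p-less-2}; it is precisely the deficit of spherical decay in $\mathbb{R}^2$ that forces here the stronger, $p$-dependent lower bound on $\beta$.
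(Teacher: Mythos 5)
Your overall architecture is close in spirit to the paper's: reduction to a uniform bound on atoms, a Littlewood--Paley decomposition of $\widetilde{T}_{b_p}$, cancellation of the atom at frequencies below the atom scale, $\beta$-dimensional information above it, H\"older's inequality on the annulus around the unit sphere, and pointwise kernel decay away from it. The paper organizes this differently (a dichotomy in the atom size: large cubes via Lemma \ref{LP-arguments}, which only needs $\beta>n-1$, and small cubes via Lemma \ref{lemma-small-cubes-2}, where the near-sphere region is handled by H\"older plus the $L^2$ Riesz-potential bound of Lemma \ref{Riesz-potential-esti-n=2} proved through the heat semigroup --- this is where $\beta>2/p$ enters), but that difference by itself would be acceptable. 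Two points in your write-up, however, are genuine problems.

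First, the atoms you reduce to are not the atoms of $DS_\beta(\mathbb{R}^2)$. You assume a Frostman-type condition $|a|(B(x,s))\leq (s/r)^\beta$, whereas the atoms of \cite{Spector-Stolyarov} (and of this paper) are defined by the heat-extension bound $t^{(n-\beta)/2}\|e^{t\Delta}a\|_{L^\infty}\leq \ell(Q)^{-\beta}$ together with $|a|(\mathbb{R}^n)\leq 1$, support and cancellation conditions. Because of cancellation in $p_t\ast a$, the heat-extension bound does not imply any ball-growth bound on $|a|$ (a dipole $\tfrac12(\delta_y-\delta_z)$ with $|y-z|\approx \ell(Q)$ is, up to a constant, a $\beta$-atom but fails your Frostman condition at small scales), so the reduction as stated does not cover $DS_\beta$. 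The gap is repairable: the only properties you actually use --- $|\widehat a(\xi)|\lesssim r|\xi|$, $|a|(\mathbb{R}^n)\leq 1$, and the dyadic energy bound $\int_{|\xi|\sim 2^j}|\widehat a|^2\,d\xi\lesssim 2^{j(2-\beta)}r^{-\beta}$ --- do hold for the true atoms (the last follows from $\|\widetilde{P}_j a\|_{L^\infty}\lesssim 2^{j(2-\beta)}\ell(Q)^{-\beta}$, i.e.\ estimate (5.9) of \cite{Spector-Stolyarov}, interpolated against $\|\widetilde{P}_j a\|_{L^1}\lesssim 1$), but this has to be said; likewise the passage from atoms to general $\mu$ needs an argument, since the atomic sums converge only weak-star (cf.\ Proposition \ref{smooth_atoms} and the Fatou argument in the proof of Theorem \ref{Thm-p-bigger-2}).

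Second, the step you yourself call decisive is asserted rather than carried out, and the assertion is inconsistent with the scheme you describe. With $b_p=\tfrac32-\tfrac1p$, Plancherel, the energy bound, and H\"older over the annulus of measure $\sim r$, the $j$-th high-frequency block is $\lesssim r^{\frac1p-\frac12-\frac{\beta}{2}}\,2^{j(\frac1p-\frac12-\frac{\beta}{2})}$, whose exponent is negative already for $\beta>\tfrac2p-1$, not ``precisely when $\beta>2/p$''; at $\beta=2/p$ nothing diverges logarithmically in this computation. This does not threaten the theorem (the stated range $\beta>2/p$ is contained in what the computation yields), but it shows the bookkeeping was not actually done, and it leaves unaddressed the uniformity in $r$ in the regime $r\gtrsim 1$, where the output region has measure $\sim r^2$ rather than $\sim r$ and the count changes (one then needs $\beta\geq \tfrac4p-2$, which happens to follow from $\beta>2/p$ and $p>1$). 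Moreover your explanation of where the hypothesis $\beta>2/p$ comes from is therefore wrong: in the paper it arises not from summability of the dyadic blocks but from the small-time part of the heat-semigroup computation in Lemma \ref{Riesz-potential-esti-n=2} used for small atoms.
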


One observes that in contrast to Theorem \ref{Thm-p-bigger-2}, Theorems \ref{Thm-p-less-2} and \ref{Thm-p-less-2-2} are only valid for the dimension stable spaces above a threshold.  This is likely an artifact of our proof:  For small cubes we adapt an argument of Miyachi, where the use of an $L^2$ estimate for a fractional operator on atoms imposes (for $n\geq 3$) the more general condition $\beta>(n-1)/2+1/p$, see Lemma \ref{lemma-small-cubes} below; for large cubes, our use of the decay of the Littlewood-Paley modes of the kernel in the proof of Lemma \ref{LP-arguments} is ultimately what gives the lower bound $n-1$ (when $n\geq 3$ and is less restrictive than the argument for small cubes when $n=2$).  
It would be interesting to know the optimal value of $\beta \in (0,n]$ in these latter two theorems, whether one has an estimate for $p=1$ when $n=2$ for some value of $\beta \in (0,2)$, and also the validity of Theorem \ref{Thm-p-bigger-2} for $p=\infty$ for any value of $\beta\in (0,n)$.

\subsection{An Application to the Wave Equation with Measure Data}
An easy scaling argument shows that with the same restrictions on $\beta$ as given by our results, the dilated operator
\begin{align*}
    T^t_{b}f(x)=\int_{\mathbb{R}^n} (1+4\pi^2|t\xi|^2)^{-b/2}e^{it|\xi|} \widehat{f}(\xi) e^{2\pi i \xi\cdot x} \, d\xi
\end{align*}
admits the estimate
\begin{align*}
        \|T^t_{b}\mu\|_{L^p(\mathbb{R}^n)} \lesssim t^{-n/p'}\|\mu\|_{DS_{\beta}(\mathbb{R}^n)} 
\end{align*}
for all $\mu \in DS_{\beta}(\mathbb{R}^n)$. This of course connects with one of the origins of the study of such multipliers, that their boundedness properties can be useful in the deduction of results for solutions of the wave equation.  Following Peral \cite{Peral-80}, Seeger, Sogge, and Stein \cite[Theorem 4.1]{Seeger-Sogge-Stein},  and M\"uller and Seeger \cite[Equation (2) on p.~1052]{Muller-Seeger}, we consider an application to the Cauchy problem.  Let $n\geq 3$ and $\beta \in (n-1,n]$.  Suppose $(I-t^2\Delta)^{b_p/2}f, t(I-t^2\Delta)^{(b_p-1)/2}g \in DS_\beta(\mathbb{R}^n)$, and consider the problem of finding $u: \mathbb{R}^n \times \mathbb{R}^+ \to \mathbb{R}$ which satisfies
\begin{align*}
u_{tt} - \Delta u &= 0, \quad (x,t) \in \mathbb{R}^n \times \mathbb{R}^+,\\
u(x,0) &= f,\quad x \in \mathbb{R}^n,\\
u_t(\cdot,0)&=g, \quad x \in \mathbb{R}^n.
\end{align*}

One can check that
\begin{align*}
u(x,t) &= \int_{\mathbb{R}^n} \left(\cos(2\pi t|\xi|)\widehat{f}(\xi)+\frac{\sin(2\pi t|\xi|)}{2\pi |\xi|}\widehat{g}(\xi)\right)e^{2\pi i \xi\cdot x}\;d\xi,
\end{align*}
satisfies the equation and boundary condition in the sense of distributions, while the results obtained in this paper imply that for any $p \in [1,\infty)$ one has the estimate
\begin{align}
\|  u(\cdot,t)\|_{L^p(\mathbb{R}^n)} \lesssim \frac{1}{t^{n/p'}}\left(\|(I-t^2\Delta)^{b_p/2}f\|_{DS_\beta}+ \|t(I-t^2\Delta)^{(b_p-1)/2}g\|_{DS_\beta}\right).
\end{align}
Note that our estimate includes the case $p=1$, and in particular when $n=3$ for $f=0$ one obtains
\begin{align}
\|  u(\cdot,t)\|_{L^1(\mathbb{R}^n)} \lesssim t \|g\|_{DS_\beta}.
\end{align}

The plan of the paper is as follows.  In Section \ref{preliminaries} we recall several definitions, kernel representation and estimates for oscillatory multipliers.  
We also establish some preliminary estimates that will be useful in the sequel.  
In Section \ref{p_larger_than_2} we give the proof of Theorem \ref{Thm-p-bigger-2}.  In Section \ref{p_small_than_2} we provide the proofs of Theorems  \ref{Thm-p-less-2} and \ref{Thm-p-less-2-2}.

\section{Preliminaries}\label{preliminaries}
Throughout the paper we use $C$ to denote a constant which depends only on parameters and the dimension, but not the functions estimated, and which may change from line to line.  We also use the notation $A\lesssim B$ to mean that $A\leq CB$ for a constant $C>0$ which depends only on the parameters.  For a measurable set $A$ such that $0<|A|<+\infty$ and a measurable function $g:\mathbb{R}^n\to \mathbb{R}$ which is integrable over $A$, we write
\begin{align*}
    \fint_A g(y)\;dy \equiv  \left(g\right)_{A} := \frac{1}{|A|} \int_A g(y)\;dy
\end{align*}
to denote the average value of $g$ over $A$. The two shorthand notations will be used interchangeably according to convenience of display.

The theorems in this paper are asserted for $b\geq b_p$, though it suffices to obtain such estimates for the critical value $b=b_p$.  In particular, for $b>b_p$, the estimates follow from the embedding $DS_\beta(\mathbb{R}^n) \hookrightarrow L^1(\mathbb{R}^n)$ and Theorem \ref{M_L1}, and therefore we restrict our attention to the critical value.  When $1 \leq p<2$, we focus on estimates for the operator \eqref{main-operator}.  The sufficiency of these estimates follows from the decomposition
\begin{align}\label{Decomposition-operator}
T_{b}f = \overline{T}_{b}f+ \nu_b\ast \widetilde{T}_{b}f
\end{align}
where
\begin{align}\label{goodkernel}
    \overline{T}_bf(x)=\int_{\mathbb{R}^n} (1-\psi(\xi))(2\pi |\xi|)^{-b}e^{i|\xi|} \widehat{f}(\xi) e^{2\pi i \xi\cdot x} \, d\xi \equiv \int_{\mathbb{R}^n} \overline{K}_b(x-y)f(y)\;dy,
\end{align}
with $\psi \in C^\infty(\mathbb{R}^n)$ as in Remark \ref{miyachis-multiplier} and
\begin{align*}
 \widehat{\nu}_b(\xi) = \frac{(2\pi|\xi|)^{b}}{(1+4\pi^2|\xi|^2)^{b/2}}.
\end{align*}
\noindent
Indeed, a simple computation shows the kernel $\overline{K}_b$ in \eqref{goodkernel} satisfies $\overline{K}_b \in L^1(\mathbb{R}^n)\cap L^\infty(\mathbb{R}^n)$ and is therefore bounded map from $L^1(\mathbb{R}^n)$ to $L^p(\mathbb{R}^n)$ for any $1\leq p \leq +\infty$, while \cite[Lemma 2 on p.~133]{S} shows $\nu_b$ is a bounded measure and therefore the corresponding operator defined by convolution with $\nu_b$ maps $L^p(\mathbb{R}^n)$ to $L^p(\mathbb{R}^n)$ for any $1\leq p \leq +\infty$.

We require several results concerning the operator $\widetilde{T}_b$ defined in \eqref{main-operator}.  By \cite{Miyachi-wave}*{Proposition 2} we can write the operator $\widetilde{T}_{b}$ as a convolution operator $\widetilde{T}_{b}f(x)=K_{b}\ast f(x)$ with the following estimate of the kernel $K_{b}$.
\begin{proposition}\label{wave-ker-esti}
Let $b\in \mathbb{R}$. Then 
\begin{enumerate}
    \item for any $\alpha\in \mathbb{N}^n$ there exists a constant $C_\alpha$ such that 
\begin{align*}
    \left|\partial^{\alpha}K_{ b}(x) \right| \leq C_\alpha \left| 1- |x| \right|^{b-\frac{n+1}{2}-|\alpha|} \quad \text{as} \quad |x|\rightarrow 1.
\end{align*}
\item  for any $\alpha\in \mathbb{N}^n$  and $N \in \mathbb{N}$ there exists a constant $C_{\alpha,N}$ such that 
\begin{align*}
    \left|\partial^{\alpha}K_{ b}(x) \right| \leq C_{\alpha,N}  |x|^{-N} \quad \text{as} \quad |x|\rightarrow \infty.
\end{align*}
\end{enumerate}
\end{proposition}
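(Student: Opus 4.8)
The plan is to prove Proposition~\ref{wave-ker-esti} by treating $K_b$ directly as the inverse Fourier transform of the symbol $m_b(\xi)=\psi(\xi)(2\pi|\xi|)^{-b}e^{i|\xi|}$, which is smooth on all of $\mathbb{R}^n$ since $\psi$ vanishes on $B(0,1)$, and analyzing it by stationary phase. First I would pass to polar coordinates $\xi=\rho\omega$ and insert the classical asymptotic expansion for the Fourier transform of surface measure on $S^{n-1}$: for $t\gtrsim 1$,
\begin{align*}
\int_{S^{n-1}} e^{2\pi i \rho\,\omega\cdot x}\,d\sigma(\omega)=\sum_{\pm}e^{\pm 2\pi i\rho|x|}\,a_\pm(\rho|x|)+E_L(\rho|x|),
\end{align*}
where each $a_\pm$ is a classical symbol of order $-(n-1)/2$ in $t=\rho|x|$ and $E_L$ has order $-(n-1)/2-L$ for a prescribed $L$. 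Substituting this into the polar representation of $K_b$ and absorbing $\psi$ and the Jacobian, one rewrites $K_b(x)$, for $|x|$ bounded away from $0$, as a finite sum of one-dimensional oscillatory integrals
\begin{align*}
|x|^{-\frac{n-1}{2}}\int_0^\infty \chi(\rho)\,\rho^{\frac{n-1}{2}-b}\,\widetilde a_{\pm}(\rho,|x|)\,e^{i\rho\,\phi_{\pm}(|x|)}\,d\rho
\end{align*}
plus a remainder governed by $E_L$. Here $\chi$ is a smooth cutoff to $\rho\gtrsim1$, each $\widetilde a_\pm$ is a symbol of order $0$ in $\rho$ with bounded $|x|$-derivatives (for $|x|$ away from $0$), and $\phi_\pm$ is affine in $|x|$ with $\phi_+$ bounded away from $0$ and $\phi_-$ vanishing exactly on the singular sphere, so that $\lambda:=\bigl|\phi_-(|x|)\bigr|\asymp\bigl|1-|x|\bigr|$ there.

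Next I would dispose of the easy pieces. The nonresonant branch $\phi_+$ and the $E_L$-remainder are handled by nonstationary phase: since $\phi_+\neq0$, iterated integration by parts in $\rho$ against $e^{i\rho\phi_+}$ gains arbitrarily many powers of $\rho^{-1}$, so that branch contributes a function that is $C^\infty$ in $x$ with all derivatives bounded near $|x|=1$ and $O(|x|^{-N})$ for every $N$ as $|x|\to\infty$; taking $L$ large does the same for the remainder. For the resonant branch, applying $\partial^\alpha_x$ either lands on $|x|^{-(n-1)/2}\widetilde a_\pm$, giving a term of the same or better type, or on $e^{i\rho\phi_-(|x|)}$, producing a factor $i\rho\,\partial_x(\phi_-\circ|\cdot|)$ whose coefficient is bounded and smooth away from $0$; thus to leading order one is reduced to estimating $\int_0^\infty\chi(\rho)\,\rho^{\gamma}\,\widetilde a(\rho,|x|)\,e^{i\lambda\rho}\,d\rho$ with $\gamma=\tfrac{n-1}{2}-b+|\alpha|$, uniformly for small $\lambda>0$. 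Decomposing dyadically in $\rho\sim2^j$, $j\ge0$, each piece is both $O(2^{j(\gamma+1)})$ trivially and $O\!\bigl(2^{j(\gamma+1)}(2^j\lambda)^{-M}\bigr)$ after $M$-fold integration by parts against $e^{i\lambda\rho}$; choosing $M>\gamma+1$ and summing the geometric series over $2^j\lesssim\lambda^{-1}$ and over $2^j\gtrsim\lambda^{-1}$ yields the bound $\lambda^{-(\gamma+1)}=\bigl|1-|x|\bigr|^{\,b-\frac{n+1}{2}-|\alpha|}$ in the range where this exponent is negative, which is part~(1). For part~(2), when $|x|\to\infty$ one has $\lambda\asymp|x|$, so the same estimate gives decay $|x|^{b-(n+1)/2-|\alpha|}$ from the oscillatory integral; combined with the prefactor $|x|^{-(n-1)/2}$, the rapidly decaying nonresonant branch, and the $E_L$-remainder with $L$ chosen large, this gives $O(|x|^{-N})$ for every $N$.

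The hard part will be the uniform control of $\int\chi(\rho)\rho^\gamma e^{i\lambda\rho}\,d\rho$ as $\lambda\to0^+$: in the singular regime $\gamma+1>0$ (equivalently $b-\tfrac{n+1}{2}-|\alpha|<0$), extracting precisely the power $\lambda^{-(\gamma+1)}$ with a constant independent of $x$; the borderline case $\gamma+1=0$ instead produces a logarithm and the range $\gamma+1<0$ only yields boundedness, so the estimate is sharp only when the displayed exponent is negative (which is automatic in this paper, where $b=b_p$ forces $\gamma+1=\tfrac1p+|\alpha|>0$). A secondary, purely bookkeeping, difficulty is tracking how many $x$-derivatives each term of the Bessel expansion can absorb and choosing $L$ large enough that the $E_L$-remainder is genuinely lower order simultaneously in the regimes $|x|\to1$ and $|x|\to\infty$. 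Alternatively — and this is the route taken here — one simply invokes the stationary-phase analysis of oscillatory multipliers $e^{i|\xi|}(2\pi|\xi|)^{-b}$ carried out in \cite{Miyachi-wave} (see also \cite{Miyachi-singular}) and quotes Proposition~2 of \cite{Miyachi-wave} verbatim.
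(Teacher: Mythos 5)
Your proposal matches the paper: the paper offers no proof of this proposition, simply quoting Proposition~2 of \cite{Miyachi-wave}, which is exactly the route you settle on, and your polar-coordinates/Bessel-expansion/stationary-phase sketch is the standard argument behind that result. Your caveat that the dyadic argument only yields the stated bound when the exponent $b-\frac{n+1}{2}-|\alpha|$ is negative is well taken (as stated for all $b\in\mathbb{R}$ the proposition should be read with that proviso), but it is harmless here since the paper only applies it with $b=b_p<\frac{n+1}{2}$.
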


For the kernel $K_{b}$ we utilize certain estimates for the Littlewood-Paley projections of the kernel.  To this end, we introduce these projections.  Let $\Phi:\mathbb{R}^n \to \mathbb{R}$ be a smooth, compactly supported function such that 
\begin{align*}
\operatorname*{supp}\Phi \subset B(0,2),\\
\Phi \equiv 1 \text{ on }
B(0,1),
\end{align*}
and define $\phi(\xi)=\Phi(\xi)-\Phi(2\xi)$.  Further define $\phi_j(\xi)= \phi(2^{-j}\xi)$.  Then one has
\begin{align}\label{partition of unity}
    1=\sum_{j \in \mathbb{Z}} \phi_j(\xi), \quad \xi \in \mathbb{R}^n \setminus \{0\}.
\end{align}
Define
\begin{align}
m_b(\xi) = \psi(\xi) (2\pi |\xi|)^{-b}e^{i|\xi|},
\end{align}
the multiplier operator arising in the definition \eqref{main-operator} and for $j\in \mathbb{Z}$ let $m_{b}^j(\xi)= m_{b}(\xi) \phi(2^{-j}\xi)$. For each $j\in \mathbb{Z}$, we write 
\begin{align}\label{def-Tj}
 \widetilde{T}_{b}^jf(x)=K_{b}^{j}\ast f(x)   
\end{align}
where $\widehat{K}_{b}^j(\xi)=m_{b}^j(\xi)$ for each $j$.

Then we have the following result.
\begin{lemma}\label{kerne--esti-1}
  Let $n\geq 2$ and $q\in [1,\infty]$ and $b\in \mathbb{R}$. Then there exists a constant $C=C(n,p,b)$ such that
  \begin{align*}
\|\partial^{\alpha}K_{b}^{j}\|_{L^q(\mathbb{R}^n)}\leq C 2^{j(\frac{n+1}{2}-\frac{1}{q}-b)} 2^{j|\alpha|}
  \end{align*}
  for all $\alpha\in \mathbb{N}^n$ and $j\in \mathbb{Z}$.
\end{lemma}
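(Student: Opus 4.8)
The strategy is to reduce everything to the two basic Bernstein-type facts about Littlewood--Paley pieces, namely that the support of $\widehat{K_b^j}$ is an annulus of size $\sim 2^j$, and that the kernel $K_b^j$ obeys the pointwise bounds coming from Proposition \ref{wave-ker-esti} after rescaling. First I would record the scaling relation: since $m_b^j(\xi)=m_b(\xi)\phi(2^{-j}\xi)$ and $\phi$ is supported in the annulus $\{1/2\le|\xi|\le 2\}$, on the support of $\phi(2^{-j}\cdot)$ the cutoff $\psi$ equals $1$ once $j\ge 1$, so there $m_b^j(\xi)=(2\pi|\xi|)^{-b}e^{i|\xi|}\phi(2^{-j}\xi)$; writing $\xi=2^j\eta$ gives $K_b^j(x)=2^{jn}\,2^{-jb}\,L_j(2^j x)$ where $L_j$ has Fourier transform $(2\pi|\eta|)^{-b}e^{i2^j|\eta|}\phi(\eta)$ (for the finitely many small $j$ the estimate is trivial since the multiplier is then a fixed Schwartz function scaled, and I would dispatch that case separately with a crude bound). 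Thus $\partial^\alpha K_b^j(x)=2^{jn}2^{-jb}2^{j|\alpha|}(\partial^\alpha L_j)(2^j x)$, and hence
\begin{align*}
\|\partial^\alpha K_b^j\|_{L^q(\mathbb{R}^n)}=2^{jn}2^{-jb}2^{j|\alpha|}2^{-jn/q}\|\partial^\alpha L_j\|_{L^q(\mathbb{R}^n)}=2^{j(n-n/q-b)}2^{j|\alpha|}\|\partial^\alpha L_j\|_{L^q(\mathbb{R}^n)}.
\end{align*}
Comparing with the claimed exponent $\frac{n+1}{2}-\frac1q-b$, it remains to show $\|\partial^\alpha L_j\|_{L^q(\mathbb{R}^n)}\lesssim 2^{-j(\frac{n-1}{2})}$ uniformly in $j$, i.e. that the $q$-independent loss is $2^{-j(n-1)/2}$.

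To get that uniform bound on $L_j$ I would not reprove stationary phase from scratch; instead I would relate $L_j$ directly to the kernel $K_{b+?}$ of Proposition \ref{wave-ker-esti}. More cleanly: apply the already-cited decay estimates for $K_b$ itself. Indeed $K_b^j=K_b\ast \check{\phi}_j$ is a Littlewood--Paley piece of the single kernel $K_b$, and Proposition \ref{wave-ker-esti} tells us $K_b$ is smooth away from the unit sphere with an integrable-type singularity of order $|1-|x||^{b-(n+1)/2}$ there and rapid decay at infinity. The Fourier support of $K_b^j$ being the annulus $|\xi|\sim 2^j$ means $\partial^\alpha K_b^j=2^{j|\alpha|}(\partial^\alpha K_b^j)$-type scaling holds, so it suffices to prove the case $\alpha=0$ and then invoke Bernstein's inequality $\|\partial^\alpha g\|_{L^q}\lesssim 2^{j|\alpha|}\|g\|_{L^q}$ valid for $g$ with Fourier support in $|\xi|\le 2^{j+1}$. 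For $\alpha=0$ I would estimate $\|K_b^j\|_{L^q}$ by splitting space into the region near the sphere $\{|\,1-|x|\,|\lesssim 2^{-j}\}$ and its complement: on the near region one uses the pointwise bound $|K_b^j(x)|\lesssim 2^{j(\frac{n+1}{2}-b)}$ coming from the rescaled Proposition \ref{wave-ker-esti} estimate (truncation of the singularity at scale $2^{-j}$), a set of measure $\sim 2^{-j}$, which contributes $2^{j(\frac{n+1}{2}-b)}2^{-j/q}$; on the far region one integrates the decaying bound $|K_b^j(x)|\lesssim 2^{jn-jb}(1+2^j|\,1-|x|\,|)^{-N}$ to the $q$-th power, which after the change of variables $s=2^j|\,1-|x|\,|$ again produces $2^{j(\frac{n+1}{2}-b)}2^{-j/q}$ up to a convergent integral (choosing $N$ large). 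Summing the two contributions gives exactly $2^{j(\frac{n+1}{2}-\frac1q-b)}$, and reinserting the Bernstein factor $2^{j|\alpha|}$ completes the proof; the endpoint $q=\infty$ is the pointwise bound on the near region and needs no integration, and $q=1$ requires checking the integral $\int (1+s)^{-N}\,ds<\infty$, both of which are immediate.

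The main obstacle, and the only place any real care is needed, is establishing the clean rescaled pointwise bounds on $K_b^j$ from Proposition \ref{wave-ker-esti}: the proposition gives the behavior of $K_b$ \emph{as} $|x|\to 1$ and \emph{as} $|x|\to\infty$, but I need a \emph{global} pointwise estimate for the Littlewood--Paley piece $K_b^j$ that interpolates between a $2^{j(\frac{n+1}{2}-b)}$ plateau near the sphere and rapid decay away from it. This requires either (i) running the stationary-phase computation for the oscillatory integral defining $L_j$ directly — writing $L_j(y)=c\int e^{i(2^j|\eta|+2\pi y\cdot\eta)}(2\pi|\eta|)^{-b}\phi(\eta)\,d\eta$ and integrating by parts in the directions transverse to, and using stationary phase along, the sphere, which reproduces the $2^{-j(n-1)/2}$ gain — or (ii) carefully localizing the estimates of Proposition \ref{wave-ker-esti} to dyadic shells around $|x|=1$ and checking the Fourier-support/Bernstein bookkeeping is consistent. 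I would take route (ii) since it reuses the already-quoted result, but one must be attentive to the transition region $|x|\sim 2$ to $\infty$ where the ``$|x|\to\infty$'' regime of Proposition \ref{wave-ker-esti} kicks in, and to the constants' dependence on $b$ (they blow up as $b\to(n+1)/2$, but $b$ is fixed here so this is harmless). Everything else is routine change-of-variables and summing a geometric-type series.
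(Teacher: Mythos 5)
Your plan is a self-contained stationary-phase argument, which is a genuinely different route from the paper: there, the case $|\alpha|=0$ is simply quoted from Lemma 3.4 of \cite{kMST-I}, and $|\alpha|\neq 0$ is handled by noting that the symbol acquires a factor $\xi^{\alpha}$, of size $2^{j|\alpha|}$ on the support of $\phi(2^{-j}\cdot)$ (your Bernstein step is an equivalent way to say this). Your peripheral reductions are fine: $K_b^j\equiv 0$ for $j\le -1$ because $\psi$ vanishes on $B(0,1)$, only finitely many $j$ see the cutoff, and it suffices to treat $\alpha=0$. But the core estimate, which you yourself flag as the only place needing care, is left with genuine gaps. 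First, your rescaling bookkeeping is wrong: since $L_j$ concentrates on a sphere of radius $\sim 2^j$ and thickness $O(1)$ with height $2^{-j(n-1)/2}$, its $L^q$ norms are not $q$-independent; what is needed is $\|\partial^{\alpha}L_j\|_{L^q}\lesssim 2^{j(n-1)(\frac1q-\frac12)}$, whereas the bound $\|\partial^{\alpha}L_j\|_{L^q}\lesssim 2^{-j(n-1)/2}$ you propose to prove is false for every $q<\infty$ (already $\|L_j\|_{L^1}\sim 2^{j(n-1)/2}$). Second, in the direct physical-space computation your off-shell bound $|K_b^j(x)|\lesssim 2^{j(n-b)}(1+2^j|1-|x||)^{-N}$ is too weak: integrating its $q$-th power over the complement of the $2^{-j}$-shell and setting $s=2^j|1-|x||$ gives $2^{j(n-b)}2^{-j/q}$, which exceeds the claimed $2^{j(\frac{n+1}{2}-b-\frac1q)}$ by $2^{j(n-1)/2}$. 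The estimate you actually need is the sharp global one, $|K_b^j(x)|\lesssim_N 2^{j(\frac{n+1}{2}-b)}(1+2^j|1-|x||)^{-N}$, i.e.\ the stationary-phase gain must be retained off the shell as well.

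Third, your preferred route (ii) for producing that bound --- localizing Proposition \ref{wave-ker-esti} to dyadic shells and convolving with the Littlewood--Paley bump --- cannot deliver it in the generality of Lemma \ref{kerne--esti-1} (all $b\in\mathbb{R}$), nor even for the values the paper uses. Pointwise information on $K_b$ gives at best $|K_b(x)|\sim |1-|x||^{b-\frac{n+1}{2}}$ near the sphere; at the paper's endpoint $b=b_1=\frac{n-1}{2}$ this profile is not locally integrable, so absolute-value convolution with $\check\phi_j$ is not even defined, and in the intermediate region $2^{-j}\lesssim |1-|x||\lesssim 1$ this profile is far larger than the needed $2^{j(\frac{n+1}{2}-b)}(2^j|1-|x||)^{-N}$; for instance it cannot produce the decaying bound $\|K_{b_p}^j\|_{L^1}\lesssim 2^{-j/p'}$ that the paper relies on in Lemma \ref{LP-arguments}. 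In other words, the oscillation of $e^{i|\xi|}$ must be exploited off the shell too, which forces route (i): the direct oscillatory-integral computation (e.g.\ via the asymptotics of the Fourier transform of surface measure plus integration by parts in the radial variable). That computation is exactly the content of the quoted Lemma 3.4 of \cite{kMST-I}; once the sharp global pointwise bound is in hand, your $L^q$ integration and the Bernstein step for $\alpha\neq 0$ do complete the proof.
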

\begin{proof}
    The proof of the above lemma for $|\alpha|=0$ follows from Lemma 3.4 of \cite{kMST-I}. For $|\alpha|\neq 0$, note that
    \begin{align*}
        \partial^{\alpha}_x K_b^j(x)=\int_{\mathbb{R}^n} (2\pi i)^{|\alpha|} \xi^{\alpha} m_{b}^j(\xi) e^{2\pi i \xi\cdot x} \, d\xi,
    \end{align*}
    Therefore, the proof for $|\alpha| \neq 0$ follows similarly, and as a result, we obtain a growth of $2^{j|\alpha|}$ in the estimate.

    This completes the proof of the lemma.
\end{proof}

In our proofs we make use of another useful property of the Littlewood-Paley projectors, that
\begin{align*}
\phi_{j-1}(\xi)+\phi_j(\xi)+\phi_{j+1}(\xi)=1 \text{ for } \xi \in \operatorname*{supp}\phi_j.
\end{align*}
This identity allows one to introduce a second projector
\begin{align}\label{second_projector}
 \widetilde{T}_{b}^jf(x)=K_{b}^{j}\ast f(x) = K_{b}^{j}\ast \widetilde{P}_jf(x)  
\end{align}
for 
\begin{align}
 \widehat{\widetilde{P}_jf}(\xi) = \left(\phi_{j-1}(\xi)+\phi_j(\xi)+\phi_{j+1}(\xi)\right)\widehat{f}(\xi).
\end{align}
This fact will be useful in the sequel.

We next recall the definition of the atomic spaces with dimensional stability $\beta\in (0,n]$.  To this end we first introduce the notion of $\beta$-atom:
\begin{definition}[$\beta$-atom]
For $\beta \in (0,n]$,  we say that $a \in M_b(\mathbb{R}^n)$ is a $\beta$-atom if there exists a cube $Q\subset \mathbb{R}^d$ with sides parallel to the coordinate axes such that
\begin{enumerate}
\item \label{supp}$\operatorname*{supp} a \subset Q$;
\item \label{ave} $a(Q) = 0$;
\item $\esssup_{x \in \mathbb{R}^n, t>0} |t^{(n-\beta)/2} e^{t\Delta} a (x)|  \leq \frac{1}{l(Q)^\beta}\label{linfty} $;
\item \label{l1} $|a|(\mathbb{R}^n) \leq 1$.
\end{enumerate}
\end{definition}
\noindent

Here and in the sequel, we use $e^{t\Delta}f$ to denote the heat extension of a distribution $f$.  For $f=\mu$ a locally finite Radon measure, one has
\begin{align}\label{heat_extension}
e^{t\Delta}\mu(x) = (4\pi t)^{-\frac{n}{2}}\int\limits_{\mathbb{R}^n}e^{-\frac{|x-y|^2}{4t}}\,d\mu(y)=p_t\ast\mu(x),\qquad x\in \R^n.
\end{align} 

The dimension stable spaces are defined as follows.
\begin{definition}[$\beta$-atomic space]
 Let $\beta \in [0,n]$.  Define the atomic space of dimension $\beta$ by
\begin{align*}
DS_\beta&(\mathbb{R}^n):=\\ 
&\left\{\mu \in M_b(\mathbb{R}^n)\colon \mu = \lim_{l\to \infty}\sum_{i=1}^{l} \lambda_{i,l} a_{i,l},    \text{ $a_{i,l}$ $\beta$-atoms, } \limsup_{l \to \infty} \sum_{i=1}^l |\lambda_{i,l}|<+\infty \right\}.
\end{align*}
Here the convergence is intended weakly-star as measures,
\begin{align*}
\int \varphi \;d\mu= \lim_{l \to \infty}  \sum_{i=1}^{l} \lambda_{i,l} \int \varphi \;da_{i,l},
\end{align*}
for all $\varphi \in C_0(\mathbb{R}^n)$, and the space is equipped with the norm
\begin{align*}
\|\mu \|_{DS_\beta(\mathbb{R}^n)}:= \inf \left\{\limsup_{l \to \infty} \sum_{i=1}^l |\lambda_{i,l}| \colon \mu = \lim_{l\to \infty}\sum_{i=1}^{l} \lambda_{i,l} a_{i,l} \right\}.
\end{align*}
\end{definition}

\begin{proposition}\label{smooth_atoms}
Let $\mu \in DS_\beta(\mathbb{R}^n)$.  There exists a sequence of atoms $\{\widetilde{a}_{i,l}\} \subset C^\infty_c(\mathbb{R}^n)$ and scalars $\{\widetilde{\lambda}_{i,l}\}$ such that
\begin{align*}
\mu = \lim_{l \to \infty} \sum_{i=1}^l \widetilde{\lambda}_{i,l} \widetilde{a}_{i,l}
 \end{align*}
such that 
\begin{align*}
\limsup_{l \to \infty} \sum_{i=1}^l |\widetilde{\lambda}_{i,l}|  \lesssim \|\mu\|_{DS_\beta(\mathbb{R}^n)}.
\end{align*}
\end{proposition}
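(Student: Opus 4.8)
The plan is to produce the smooth representation by mollifying the atoms of an arbitrary near-optimal atomic representation of $\mu$. Convolving a compactly supported finite measure with a standard mollifier $\rho_\eps$ yields a function in $C^\infty_c(\mathbb{R}^n)$, and the key observation is that if $\eps$ is small compared to the side length of the atom's cube, then the mollified atom is again a $\beta$-atom up to a fixed multiplicative constant. A diagonal choice of the mollification parameter along the partial sums then preserves the weak-$*$ convergence.

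To implement this, first I would fix a symmetric $\rho\in C^\infty_c(\mathbb{R}^n)$ with $\rho\ge 0$, $\operatorname{supp}\rho\subset B(0,1)$, $\int\rho=1$, and set $\rho_\eps(x)=\eps^{-n}\rho(x/\eps)$. Given $\mu\in DS_\beta(\mathbb{R}^n)$, I choose a representation $\mu=\lim_{l\to\infty}\mu_l$ (weak-$*$), $\mu_l=\sum_{i=1}^l\lambda_{i,l}a_{i,l}$, with each $a_{i,l}$ a $\beta$-atom supported in a cube $Q_{i,l}$ and $\limsup_l\sum_{i=1}^l|\lambda_{i,l}|\le 2\|\mu\|_{DS_\beta(\mathbb{R}^n)}$; then $M:=\sup_l|\mu_l|(\mathbb{R}^n)\le\sup_l\sum_i|\lambda_{i,l}|<\infty$. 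For any $\eps\le\tfrac12 l(Q)$, the function $a*\rho_\eps$ is supported in the cube $Q'$ concentric with $Q$ of side $l(Q)+2\eps\le 2l(Q)$; it has integral $(\int a)(\int\rho_\eps)=0$; it satisfies $\|a*\rho_\eps\|_{L^1(\mathbb{R}^n)}\le|a|(\mathbb{R}^n)\le 1$; and, since the heat semigroup commutes with convolution and $\|\rho_\eps\|_{L^1}=1$,
\[
\esssup_{x,\,t>0}\bigl|t^{(n-\beta)/2}e^{t\Delta}(a*\rho_\eps)(x)\bigr|\ \le\ \esssup_{x,\,t>0}\bigl|t^{(n-\beta)/2}e^{t\Delta}a(x)\bigr|\ \le\ \frac1{l(Q)^\beta}\ \le\ \frac{2^\beta}{l(Q')^\beta}.
\]
Hence $2^{-\beta}(a*\rho_\eps)$ is a $C^\infty_c$ $\beta$-atom supported in $Q'$. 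Setting $\widetilde a_{i,l}:=2^{-\beta}(a_{i,l}*\rho_{\eps_l})$ and $\widetilde\lambda_{i,l}:=2^\beta\lambda_{i,l}$ for \emph{any} scales $\eps_l\le\tfrac12\min_{1\le i\le l}l(Q_{i,l})$, we obtain genuine smooth $\beta$-atoms with $\limsup_l\sum_{i=1}^l|\widetilde\lambda_{i,l}|=2^\beta\limsup_l\sum_i|\lambda_{i,l}|\lesssim\|\mu\|_{DS_\beta(\mathbb{R}^n)}$, as required.

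It remains to pin down $\eps_l$ so that $\sum_{i=1}^l\widetilde\lambda_{i,l}\widetilde a_{i,l}=\mu_l*\rho_{\eps_l}$ still converges weak-$*$ to $\mu$. For $\varphi\in C_0(\mathbb{R}^n)$, using $\int\varphi\,d(\mu_l*\rho_{\eps_l})=\int(\varphi*\rho_{\eps_l})\,d\mu_l$ (valid as $\rho$ is symmetric), one has
\[
\Bigl|\int\varphi\,d(\mu_l*\rho_{\eps_l})-\int\varphi\,d\mu\Bigr|\ \le\ M\,\|\varphi*\rho_{\eps_l}-\varphi\|_{L^\infty(\mathbb{R}^n)}\ +\ \Bigl|\int\varphi\,d\mu_l-\int\varphi\,d\mu\Bigr|,
\]
and the last term tends to $0$ as $l\to\infty$ by the choice of representation. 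Since $\varphi*\rho_\eps\to\varphi$ uniformly as $\eps\to 0$, I would additionally require that $\eps_l$ be small enough that $\|\varphi_k*\rho_{\eps_l}-\varphi_k\|_{L^\infty}\le 1/l$ for $k=1,\dots,l$, where $\{\varphi_k\}_{k\ge1}$ is a fixed countable dense subset of $C_0(\mathbb{R}^n)$. This yields $\int\varphi_k\,d(\mu_l*\rho_{\eps_l})\to\int\varphi_k\,d\mu$ for every $k$, and since $\sup_l|\mu_l*\rho_{\eps_l}|(\mathbb{R}^n)\le M<\infty$, a routine $3\eps$/density argument upgrades this to weak-$*$ convergence against all of $C_0(\mathbb{R}^n)$, i.e. $\mu=\lim_l\sum_{i=1}^l\widetilde\lambda_{i,l}\widetilde a_{i,l}$.

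The only real subtlety is this last diagonalization: one must send $\eps_l\to 0$ (to restore the weak-$*$ limit) while simultaneously keeping $\eps_l$ below the $l$-dependent threshold $\min_{i\le l}l(Q_{i,l})$ (so that mollification costs only the fixed factor $2^\beta$ in the atom normalization rather than degenerating as the cubes shrink), and the $\varphi$-dependence of the rate in $\varphi*\rho_\eps\to\varphi$ is absorbed by testing against a countable dense family together with the uniform mass bound $M$. Everything else is a direct verification of the atom axioms under convolution.
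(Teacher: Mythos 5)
Your proof is correct and follows essentially the same route as the paper: mollify the atoms of a near-optimal decomposition, check that $2^{-\beta}(a\ast\rho_\eps)$ is again a smooth $\beta$-atom (adapted to a dilated cube), renormalize the coefficients by $2^\beta$, and choose the mollification parameters diagonally so the weak-$*$ limit is preserved. If anything, you make explicit the diagonalization step (one scale $\eps_l$ per level, a countable dense family in $C_0(\mathbb{R}^n)$, and the uniform mass bound) that the paper only summarizes as ``a suitable selection of $\epsilon_{i,l}$ by diagonalization.''
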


\begin{proof}
Let $\rho$ be a smooth function which is supported in $B(0,1)$ and satisfies $\int \rho=1$. For $\epsilon>0$, let $\rho_{\epsilon}(x)= \epsilon^{-n} \rho(x/\epsilon)$. If $a$ is a $\beta$-atom with $\supp (a) \subset Q$, then for $\epsilon < \ell(Q)$ standard manipulation of convolution gives
  \begin{align*}
  \supp(a\ast \rho_{\epsilon}) &\subset 2Q;\\
  \int_{2Q} a\ast \rho_{\epsilon}\;dx &=0;\\
       t^{(n-\beta)/2}\|e^{t\Delta} (a\ast \rho_{\epsilon})\|_{L^{\infty}(\mathbb{R}^n)} &\leq t^{(n-\beta)/2} \|e^{t\Delta} a\|_{L^{\infty}(\mathbb{R}^n)} \leq \frac{2^\beta}{l(2Q)^\beta};\\
        \|a\ast \rho_{\epsilon}\|_{L^{1}(\mathbb{R}^n)} &\leq |a|(\mathbb{R}^n).
  \end{align*}
 In particular, $(a\ast \rho_{\epsilon})/2^\beta \in C^\infty_c(\mathbb{R}^n)$ is a $\beta$-atom and $\lim_{\epsilon \to 0^+} a\ast \rho_{\epsilon}=a$.  Thus, if
  \begin{align*}
\mu = \lim_{l \to \infty} \sum_{i=1}^l \lambda_{i,l} a_{i,l}
 \end{align*}
 for some sequence of $\beta$-atoms $\{a_{i,l}\}$, then
   \begin{align*}
\mu = \lim_{l \to \infty} \lim_{\epsilon \to 0^+} \sum_{i=1}^l \lambda_{i,l} a_{i,l}\ast \rho_\epsilon = \lim_{l \to \infty} \sum_{i=1}^l \lambda_{i,l} a_{i,l,\epsilon_{i,l}}
 \end{align*}
for a suitable selection of $\epsilon_{i,l}>0$ by diagonalization.  Here we use the notation $a_{i,l,\epsilon_{i,l}} = (a_{i,l} \ast\rho_{\epsilon_{i,l}}) \in C^\infty_c(\mathbb{R}^n)$.  The claimed approximation then follows because
 \begin{align*}
\mu = \lim_{l \to \infty} \sum_{i=1}^l \lambda_{i,l}2^\beta a_{i,l,\epsilon_{i,l}}/2^\beta =\lim_{l \to \infty} \sum_{i=1}^l \widetilde{\lambda}_{i,l}\widetilde{a}_{i,l}
 \end{align*}
with $\widetilde{a}_{i,l} = a_{i,l,\epsilon_{i,l}} /2^\beta \in C^\infty_c(\mathbb{R}^n)$ an atom and $\widetilde{\lambda}_{i,l} = \lambda_{i,l}2^\beta$ for each $l \in \mathbb{N}$ and $i=1,\ldots,l$.  This completes the proof of the proposition.
\end{proof}

In the next two lemmas, we estimate the $L^2$ norm of the Riesz potential acting on these $\beta$-atoms, which will be useful for controlling the small atoms in the proofs of the main theorems.
\begin{lemma}\label{Riesz-potential-esti}
Let  $n\geq 3$ and $p\in [1,2]$.  Let $a$ be a $\beta$-atom adapted to a cube $Q$ with side length $\ell(Q)$. Then for $\beta > \frac{n-1}{2}+\frac{1}{p}$, there exists a constant $C=C(n,p, \beta)$ such that 
\begin{align*}
\|I_{b_p}(a)\|_{L^2(\mathbb{R}^n)}\leq C \ell(Q)^{\frac{1}{2}-\frac{1}{p}}.
\end{align*}
\end{lemma}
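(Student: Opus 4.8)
The plan is to pass to the Fourier side and use Plancherel together with the heat-semigroup bound built into the definition of a $\beta$-atom (condition (3)) to control the high frequencies of $\widehat a$.

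First I would eliminate the dependence on $\ell(Q)$ by scaling. Write $\ell=\ell(Q)$, let $x_Q$ denote the center of $Q$, and put $a_1(x):=\ell^{n}a(\ell x+x_Q)$. A change of variables in the integral defining $I_{b_p}$ gives $I_{b_p}a(x)=\ell^{\,b_p-n}(I_{b_p}a_1)\bigl((x-x_Q)/\ell\bigr)$, hence $\|I_{b_p}a\|_{L^2(\mathbb{R}^n)}=\ell^{\,b_p-n/2}\|I_{b_p}a_1\|_{L^2(\mathbb{R}^n)}=\ell^{\,1/2-1/p}\|I_{b_p}a_1\|_{L^2(\mathbb{R}^n)}$, since $b_p-n/2=\tfrac12-\tfrac1p$. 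Moreover, using $e^{t\Delta}a_1(x)=\ell^{n}(e^{\ell^{2}t\Delta}a)(\ell x+x_Q)$ one checks that the four defining properties of a $\beta$-atom are scale-invariant, so $a_1$ is a $\beta$-atom adapted to the unit cube $Q_1$ centered at the origin. It therefore suffices to prove $\|I_{b_p}a_1\|_{L^2(\mathbb{R}^n)}\le C(n,p,\beta)$. Note that for $n\ge 3$ and $p\in[1,2]$ one has $b_p=\tfrac{n+1}{2}-\tfrac1p\in\bigl[\tfrac{n-1}{2},\tfrac n2\bigr]\subset(0,n)$, so $I_{b_p}$ is an honest Riesz potential with $\widehat{I_{b_p}a_1}(\xi)=(2\pi|\xi|)^{-b_p}\widehat{a_1}(\xi)$; by Proposition \ref{smooth_atoms} (or directly, since the cancellation $a_1(Q_1)=0$ already makes $(2\pi|\xi|)^{-b_p}\widehat{a_1}$ locally integrable near the origin) Plancherel applies and $\|I_{b_p}a_1\|_{L^2}^2=\int_{\mathbb{R}^n}(2\pi|\xi|)^{-2b_p}|\widehat{a_1}(\xi)|^2\,d\xi$.

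Next I would split this integral at $|\xi|=1$. On the low-frequency part, the cancellation and unit-size support of $a_1$ give $|\widehat{a_1}(\xi)|\le\pi\sqrt n\,|\xi|$, so $\int_{|\xi|\le1}(2\pi|\xi|)^{-2b_p}|\widehat{a_1}|^2\,d\xi\lesssim\int_{|\xi|\le1}|\xi|^{2-2b_p}\,d\xi<\infty$, finite because $b_p\le\tfrac n2<\tfrac{n+2}{2}$. On the high-frequency part, decompose $\{|\xi|>1\}$ into dyadic annuli $A_k=\{2^k\le|\xi|<2^{k+1}\}$, $k\ge 0$, and on each $A_k$ compare $\widehat{a_1}$ to the heat flow at the matched time $t_k=2^{-2k}$: since $e^{-8\pi^2 t_k|\xi|^2}\ge e^{-32\pi^2}$ on $A_k$, Plancherel yields
\[
\int_{A_k}|\widehat{a_1}(\xi)|^2\,d\xi \;\le\; e^{32\pi^2}\|e^{t_k\Delta}a_1\|_{L^2}^2 \;\le\; e^{32\pi^2}\|e^{t_k\Delta}a_1\|_{L^\infty}\|e^{t_k\Delta}a_1\|_{L^1} \;\le\; e^{32\pi^2}\,t_k^{-(n-\beta)/2} \;=\; e^{32\pi^2}\,2^{k(n-\beta)},
\]
using condition (3) for $a_1$ (with $\ell(Q_1)=1$), condition (4), and the $L^1$-contractivity of $e^{t\Delta}$ on finite measures. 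Summing the resulting geometric series,
\[
\int_{|\xi|>1}(2\pi|\xi|)^{-2b_p}|\widehat{a_1}|^2\,d\xi \;\lesssim\; \sum_{k\ge0}2^{-2kb_p}\,2^{k(n-\beta)} \;=\; \sum_{k\ge0}2^{\,k(n-\beta-2b_p)},
\]
which converges precisely when $\beta>n-2b_p=\tfrac2p-1$. Since $\tfrac{n-1}{2}+\tfrac1p-\bigl(\tfrac2p-1\bigr)=b_p\ge 1>0$ for $n\ge 3$, the hypothesis $\beta>\tfrac{n-1}{2}+\tfrac1p$ amply guarantees convergence. Combining the two pieces gives $\|I_{b_p}a_1\|_{L^2}^2\le C(n,p,\beta)$, and the scaling identity then yields $\|I_{b_p}a\|_{L^2}\le C(n,p,\beta)\,\ell(Q)^{1/2-1/p}$.

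The computation is routine once organized this way; the points needing care are (i) checking $b_p\in(0,n)$ for the stated range — this is where $n\ge 3$ enters, ensuring $I_{b_p}$ and the Fourier identity make sense; (ii) justifying Plancherel for $I_{b_p}a_1$ when $a_1$ is only a finite measure, handled either by smoothing through Proposition \ref{smooth_atoms} and a limiting argument or by noting the cancellation already renders $(2\pi|\xi|)^{-b_p}\widehat{a_1}$ locally integrable; and (iii) checking the scale-invariance of the atom conditions under the correct rescaling of the time variable. The genuine content is the single estimate $\int_{A_k}|\widehat{a_1}|^2\lesssim\|e^{t_k\Delta}a_1\|_{L^2}^2\lesssim\|e^{t_k\Delta}a_1\|_{L^\infty}$, which is exactly the step converting the $L^\infty$ heat bound of condition (3) into the decay of $\widehat{a_1}$ needed to overcome the growth $|\xi|^{-2b_p}$ with $2b_p\le n$; no step is a serious obstacle. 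I remark in passing that this argument in fact only requires $\beta>\tfrac2p-1$, a weaker hypothesis than stated — the stronger threshold is what the small-cube part of the subsequent proof needs, not this lemma in isolation.
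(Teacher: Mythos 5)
Your proof is correct, but it takes a genuinely different route from the paper's. The paper stays on the physical side: it writes $\|I_{b_p}a\|_{L^2}^2=\int I_{2b_p}(a)\,\overline{a}$, uses the subordination formula $I_{2b_p}=c\int_0^\infty t^{b_p-1}e^{t\Delta}\,dt$ and the semigroup property to reduce to $\int_0^\infty t^{b_p-1}\|p_t\ast a\|_{L^1}\|p_t\ast a\|_{L^\infty}\,dt$, splits the time integral at $t=\ell(Q)^2$, and invokes both heat bounds for atoms from Spector--Stolyarov, including the \emph{decaying} $L^1$ bound $\|p_t\ast a\|_{L^1}\lesssim \ell(Q)^{n-\beta}t^{-(n-\beta)/2}$ on the small-time piece (after reducing WLOG to $\beta\le n-1$, which is where $n\ge3$ is used). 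Your version rescales to a unit cube, passes to the Fourier side, and runs the same heat-duality idea dyadically in frequency with the matched times $t_k=2^{-2k}$, using only the trivial bound $\|p_{t_k}\ast a_1\|_{L^1}\le 1$ together with the atom's $L^\infty$ heat bound; the low frequencies are handled directly by cancellation. The two arguments are morally linked by subordination, but your choice of the trivial $L^1$ bound on the high-frequency/small-time piece is what relaxes the convergence condition to $\beta>n-2b_p=\tfrac{2}{p}-1$, so your closing observation is accurate: the threshold $\beta>\tfrac{n-1}{2}+\tfrac1p$ in the lemma reflects the paper's particular estimate (the decaying $L^1$ bound forces $b_p>n-\beta$ rather than $b_p>(n-\beta)/2$), not an intrinsic limit of the statement; it is harmless for the paper since Theorem \ref{Thm-p-less-2} ultimately imposes $\beta>n-1$ through Lemma \ref{LP-arguments}. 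Two small remarks: your aside that $n\ge3$ enters through $b_p\in(0,n)$ is not quite right ($b_p\in(0,n)$ also holds for $n=2$); in the paper $n\ge3$ only serves the reduction to $\beta\le n-1$, and your argument does not need the dimension restriction at all. And the Plancherel step for a measure atom does deserve the one-line justification you give (smoothing via Proposition \ref{smooth_atoms} or noting $(2\pi|\xi|)^{-b_p}\widehat{a_1}\in L^2$ directly), which is consistent with how the paper itself deploys the lemma on $C^\infty_c$ atoms.
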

\begin{proof}

By the semigroup property and the heat kernel representation of $I_{b_p}$, we write
\begin{align}\label{Riesz-potential}
\nonumber \|I_{b_p}(a)\|_{L^2(\mathbb{R}^n)}^2 & = \int_{\mathbb{R}^n} I_{b_p}(a)(x) \overline{I_{b_p}(a)(x)} \, dx\\
\nonumber & =  \int_{\mathbb{R}^n} I_{2b_p}(a)(x) \overline{a(x)} \, dx\\
\nonumber & =  \int_{\mathbb{R}^n} \left(\int_{0}^{\infty} t^{b_p-1} p_t\ast a(x) \, dt \right) \overline{a(x)}\, dx\\
\nonumber & =  \int_{0}^{\infty} t^{b_p-1} \left( \int_{\mathbb{R}^n}  p_t\ast a(x) \overline{a(x)} \, dx  \right) \, dt\\
\nonumber & = \int_{0}^{\infty} t^{b_p-1} \int_{\mathbb{R}^n} \left|p_{\frac{t}{2}}\ast a(x) \right|^2 \, dx \, dt\\
& =  \int_{0}^{\infty} t^{b_p-1} \int_{\mathbb{R}^n} p_{\frac{t}{2}}\ast a(x)~   \overline{p_{\frac{t}{2}}\ast a(x)} \,dx\, dt\\
\nonumber & \lesssim  \int_{0}^{\infty} t^{b_p-1} \|p_t\ast a\|_{L^1(\mathbb{R}^n)}\|p_t\ast a\|_{L^\infty(\mathbb{R}^n)}  \, dt \\
\nonumber &\leq \int_{0}^{\ell(Q)^2} t^{b_p-1} \|p_t\ast a\|_{L^1(\mathbb{R}^n)}\|p_t\ast a\|_{L^\infty(\mathbb{R}^n)}  \, dt \\
\nonumber \quad&+ \int_{\ell(Q)^2}^\infty t^{b_p-1} \|p_t\ast a\|_{L^1(\mathbb{R}^n)}\|p_t\ast a\|_{L^\infty(\mathbb{R}^n)}  \, dt\\
\nonumber &=:I+II.
\end{align}

To estimate $I$, we recall estimates (3.4) and (3.5) of \cite{Spector-Stolyarov}: 
\begin{align*}
 \|p_t\ast a\|_{L^\infty(\mathbb{R}^n)}  &\lesssim  \frac{1}{t^{\frac{n-\beta}{2}}\ell(Q)^{\beta}}\\
  \|p_t\ast a\|_{L^1(\mathbb{R}^n)}  &\lesssim \max \left\{ \frac{\ell(Q)^{n-\beta}}{t^{\frac{n-\beta}{2}}}, \frac{\ell(Q)}{t^{\frac{1}{2}}}\right\}.   
\end{align*}

For $n\geq 3$, one has $\frac{n-1}{2}+\frac{1}{p} < n-1$.  We may therefore assume $\beta \leq n-1$, as the inequality for this range of $\beta$ implies the inequality for all larger $\beta$ by the nested properties of $DS_\beta(\mathbb{R}^n)$.  In particular, we have   
\begin{align}\label{small-t}
  I\nonumber &= \int_{0}^{\ell(Q)^2} t^{b_p-1} \|p_t\ast a\|_{L^1(\mathbb{R}^n)}\|p_t\ast a\|_{L^\infty(\mathbb{R}^n)}   \, dt \\
 \nonumber &\lesssim  \ell(Q)^{n-2\beta} \int_{0}^{\ell(Q)^2} t^{b_p-1-n-\beta}   \, dt \\
 &=  \ell(Q)^{n-2\beta}  \int_{0}^{\ell(Q)^2} t^{b_p-1-(n-\beta)}   \, dt\\ 
 \nonumber &\lesssim  \ell(Q)^{n-2\beta} \ell(Q)^{2b_p-2n+2\beta} \\
\nonumber &\lesssim \ell(Q)^{2b_p-n} = \ell(Q)^{1-\frac{2}{p}}. 
\end{align}
This requires $b_p-1-(n-\beta)>-1$, which holds because of the assumption
\begin{align*}
\beta> n-b_p=\frac{n-1}{2}+\frac{1}{p}.
\end{align*}

For $II$ we use the $L^\infty$ bound for convolution and the improved decay in $L^1$ bound recalled above:
\begin{align}
II\nonumber &\lesssim \int_{\ell(Q)^2}^{\infty} t^{b_p-1-\frac{n}{2}}  \|p_t\ast a\|_{L^1(\mathbb{R}^n)} \, dt \\   
\nonumber &\lesssim  \int_{\ell(Q)^2}^{\infty}  t^{b_p-1-\frac{n}{2}}  \frac{l(Q)}{t^{\frac{1}{2}}} \, dt\\
\nonumber&\lesssim  \ell(Q)^{2b_p-n},
\end{align}
where the finiteness of the integral is justified by the fact that $2b_p-n-1<0$.

Finally, the combination of the estimates for $I$ and $II$ and \eqref{Riesz-potential} yield
\begin{align*}
    \|I_{b_p}(a)\|_{L^2(\mathbb{R}^n)} \lesssim \ell(Q)^{\frac{1}{2}-\frac{1}{p}}.
\end{align*}

This completes the proof of the lemma.
\end{proof}

\begin{lemma}\label{Riesz-potential-esti-n=2}
Let  $n=2$ and $p\in (1,2)$.  Let $a$ be a $\beta$-atom adapted to a cube $Q$ with side length $\ell(Q)$. Then for $\beta > \frac{2}{p}$, there exists a constant $C=C(p, \beta)$ such that 
\begin{align*}
\|I_{b_p}(a)\|_{L^2(\mathbb{R}^2)}\leq C \ell(Q)^{\frac{1}{2}-\frac{1}{p}}.
\end{align*}
\end{lemma}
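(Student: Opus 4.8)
The plan is to follow the proof of Lemma \ref{Riesz-potential-esti} line by line, the only structural difference being that in two dimensions the hypothesis $\beta>\tfrac{2}{p}$ forces $\beta>1=n-1$, so that in the $L^1$ bound for $p_t\ast a$ the roles of the two competing terms are exchanged relative to the case $n\ge3$.

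First I would note that for $p\in(1,2)$ one has $b_p=\tfrac32-\tfrac1p\in(\tfrac12,1)$, hence $2b_p\in(0,2)=(0,n)$ and $I_{2b_p}$ is a genuine Riesz potential. Then, exactly as in the chain of identities leading to \eqref{Riesz-potential} (the semigroup property, the subordination formula $I_{2b_p}=c\int_0^\infty t^{b_p-1}p_t\ast(\cdot)\,dt$, Fubini, and the Hölder bound $\int|p_{t/2}\ast a|^2\le\|p_{t/2}\ast a\|_{L^1}\|p_{t/2}\ast a\|_{L^\infty}$), one reduces to
\[
\|I_{b_p}(a)\|_{L^2(\mathbb{R}^2)}^2\lesssim\int_0^\infty t^{b_p-1}\,\|p_t\ast a\|_{L^1(\mathbb{R}^2)}\,\|p_t\ast a\|_{L^\infty(\mathbb{R}^2)}\,dt,
\]
which I split as $I+II$, the integrals over $(0,\ell(Q)^2)$ and $(\ell(Q)^2,\infty)$ respectively. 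If one prefers, Proposition \ref{smooth_atoms} lets one take $a\in C^\infty_c(\mathbb{R}^2)$, which makes all these manipulations transparent.

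For the small-$t$ piece $I$ I would use estimate (3.4) of \cite{Spector-Stolyarov}, namely $\|p_t\ast a\|_{L^\infty}\lesssim t^{-(2-\beta)/2}\ell(Q)^{-\beta}$, together with estimate (3.5) of \cite{Spector-Stolyarov}; since $\beta>1$, for $t<\ell(Q)^2$ the maximum there is realized by the term $\ell(Q)t^{-1/2}$, so $\|p_t\ast a\|_{L^1}\lesssim\ell(Q)t^{-1/2}$. This gives
\[
I\lesssim\ell(Q)^{1-\beta}\int_0^{\ell(Q)^2}t^{b_p-5/2+\beta/2}\,dt,
\]
and the integral converges exactly when $\beta>3-2b_p=\tfrac2p$ --- which is where the hypothesis is used --- so that $I\lesssim\ell(Q)^{1-\beta}\ell(Q)^{2b_p-3+\beta}=\ell(Q)^{2b_p-2}=\ell(Q)^{1-2/p}$. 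For the large-$t$ piece $II$ I would simply use the crude bounds $\|p_t\ast a\|_{L^1}\le|a|(\mathbb{R}^2)\le1$ and $\|p_t\ast a\|_{L^\infty}\le\|p_t\|_{L^\infty(\mathbb{R}^2)}|a|(\mathbb{R}^2)\lesssim t^{-1}$, whence $II\lesssim\int_{\ell(Q)^2}^\infty t^{b_p-2}\,dt$; since $b_p<1$ for $p<2$ this converges and is $\lesssim\ell(Q)^{2b_p-2}=\ell(Q)^{1-2/p}$. (Alternatively one can retain the sharp $L^1$ bound, whose dominant term for $t>\ell(Q)^2$ is $\ell(Q)^{2-\beta}t^{-(2-\beta)/2}$ when $\beta>1$; that route requires $\beta<1+\tfrac2p$, which holds automatically since $\beta\le2<1+\tfrac2p$ for $p<2$.) Combining the two bounds and taking square roots yields $\|I_{b_p}(a)\|_{L^2(\mathbb{R}^2)}\lesssim\ell(Q)^{1/2-1/p}$.

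The computation is routine; the only thing needing care is keeping track of which term of the $L^1$ estimate is active on each range of $t$ (the opposite of the case $n\ge3$) and verifying that the powers of $\ell(Q)$ still combine to give exactly $\ell(Q)^{1-2/p}$. The constraint produced by the small-$t$ piece is precisely $\beta>2/p$, and the potential upper constraint from the large-$t$ piece disappears once one uses the trivial $L^1$ bound there, so no restriction beyond $\beta>2/p$ arises.
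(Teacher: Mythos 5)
Your proposal is correct and follows essentially the same route as the paper: the subordination identity reducing $\|I_{b_p}a\|_{L^2}^2$ to $\int_0^\infty t^{b_p-1}\|p_t\ast a\|_{L^1}\|p_t\ast a\|_{L^\infty}\,dt$, the split at $t=\ell(Q)^2$, and the use of the Spector--Stolyarov heat bounds on the small-$t$ piece, which is exactly where $\beta>2/p$ enters and where the roles of the two $L^1$ terms are exchanged relative to $n\ge 3$. The only (harmless) deviation is in the tail: you use the crude bounds $\|p_t\ast a\|_{L^1}\le 1$, $\|p_t\ast a\|_{L^\infty}\lesssim t^{-1}$ with convergence from $b_p<1$, whereas the paper keeps the sharp $L^1$ decay $\ell(Q)^{2-\beta}t^{-(2-\beta)/2}$ (your stated alternative, with the constraint $\beta\le 2<1+2/p$); both give $\ell(Q)^{2b_p-2}$.
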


\begin{proof}
We argue as in the preceding proof, mutatis mutandis.  For $I$, the $L^1$ bounds on the convolution are inverted and we obtain
\begin{align*}
  I &= \int_{0}^{\ell(Q)^2} t^{b_p-1}  \|p_t\ast a\|_{L^1(\mathbb{R}^2)}\|p_t\ast a\|_{L^\infty(\mathbb{R}^2)}   \, dt \\
 &\lesssim  \int_{0}^{\ell(Q)^2} t^{b_p-1} \frac{1}{t^{\frac{2-\beta}{2}}l(Q)^{\beta}} \frac{l(Q)}{t^{\frac{1}{2}}}  \, dt \\
&\lesssim  \ell(Q)^{2b_p-2},
\end{align*}
which requires
\begin{align*}
\beta>\frac{2}{p}.
\end{align*}
The alternate to $II$ is then 
\begin{align}
II\nonumber &\lesssim \int_{\ell(Q)^2}^{\infty} t^{b_p-1}  \|p_t\ast a\|_{L^1(\mathbb{R}^2)}  \|p_t\ast a\|_{L^\infty(\mathbb{R}^2)}  \, dt \\ 
\nonumber &\lesssim  \int_{\ell(Q)^2}^{\infty}  t^{b_p-1} \frac{l(Q)^{2-\beta}}{t^{\frac{2-\beta}{2}}} \frac{1}{t} \, dt\\
\nonumber& \lesssim  \ell(Q)^{2b_p-2},
\end{align}
which holds because $\beta\leq 2 < \frac{2}{p}+1$.  This completes the proof of the lemma.
\end{proof}

\section{The estimate for \texorpdfstring{$2\leq p <\infty$}{p bigger 2}}\label{p_larger_than_2}
In this section, we will provide the proof of Theorem \ref{Thm-p-bigger-2}. To this end, it will be useful to first prove several preliminary results.

\begin{proposition}\label{prop-L2-gamma-1}
 Let $\beta\in (0, n]$. Then there exists a constant $C=C(n,\beta)$ such that 
 \begin{align*}
     \|T_{\frac{n}{2}}\mu\|_{L^2(\mathbb{R}^n)} \leq C \|\mu\|_{DS_{\beta}(\mathbb{R}^n)}
 \end{align*}
for all $\mu \in DS_{\beta}(\mathbb{R}^n)$.
\end{proposition}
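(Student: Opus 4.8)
The plan is to exploit that for the exponent $p=2$ the oscillatory factor $e^{i|\xi|}$ plays no role on the Fourier side, so that the estimate reduces --- via Plancherel and a trivial pointwise domination of symbols --- to the Sobolev inequality \eqref{sobolev} for the Riesz potential $I_{n/2}$. Note that $n/2=b_2$, so this proposition is in fact the $p=2$ case of Theorem \ref{Thm-p-bigger-2}, and one expects it to follow with little effort.

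Concretely: for $\mu\in DS_\beta(\mathbb{R}^n)\subset M_b(\mathbb{R}^n)$ the Fourier transform $\widehat\mu$ is bounded and continuous, so $T_{n/2}\mu$ is unambiguously defined by $\widehat{T_{n/2}\mu}(\xi)=(1+4\pi^2|\xi|^2)^{-n/4}e^{i|\xi|}\widehat\mu(\xi)$. Since $|e^{i|\xi|}|=1$, Plancherel's theorem together with the pointwise bound $(1+4\pi^2|\xi|^2)^{-n/2}\le(4\pi^2|\xi|^2)^{-n/2}=(2\pi|\xi|)^{-n}$ gives
\begin{align*}
\|T_{n/2}\mu\|_{L^2(\mathbb{R}^n)}^2=\int_{\mathbb{R}^n}(1+4\pi^2|\xi|^2)^{-n/2}|\widehat\mu(\xi)|^2\,d\xi\le\int_{\mathbb{R}^n}(2\pi|\xi|)^{-n}|\widehat\mu(\xi)|^2\,d\xi=\|I_{n/2}\mu\|_{L^2(\mathbb{R}^n)}^2,
\end{align*}
where I have used that, with the normalization of \cite{S}, the Riesz potential of order $\alpha=n/2\in(0,n)$ satisfies $\widehat{I_{n/2}\mu}(\xi)=(2\pi|\xi|)^{-n/2}\widehat\mu(\xi)$. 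Then I would invoke \eqref{sobolev} with $\alpha=n/2$, for which $n/(n-\alpha)=2$, to obtain $\|I_{n/2}\mu\|_{L^2(\mathbb{R}^n)}\le C(n,\beta)\|\mu\|_{DS_\beta(\mathbb{R}^n)}$; this completes the argument and shows in passing that the a priori distributional object $T_{n/2}\mu$ is genuinely square-integrable.

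The one point that deserves attention is the legitimacy of Plancherel for a measure in place of a Schwartz function. I do not expect a real difficulty here: the Fourier-side formula for $T_{n/2}\mu$ above is well-posed because $\widehat\mu$ is bounded, and the resulting function lies in $L^2$ precisely on account of \eqref{sobolev}. Alternatively --- in the style of the later sections --- one may reduce by linearity and Proposition \ref{smooth_atoms} to the uniform bound $\|T_{n/2}a\|_{L^2(\mathbb{R}^n)}\le C(n,\beta)$ over $\beta$-atoms $a$, which follows from the same Fourier computation combined with the elementary heat-semigroup estimates $\|p_s\ast a\|_{L^1(\mathbb{R}^n)}\le|a|(\mathbb{R}^n)$ and $\|p_s\ast a\|_{L^\infty(\mathbb{R}^n)}\le\min\{\|p_s\|_{L^\infty(\mathbb{R}^n)}|a|(\mathbb{R}^n),\ s^{-(n-\beta)/2}\ell(Q)^{-\beta}\}$ (the latter being the $L^\infty$ heat-extension bound in the definition of a $\beta$-atom), and one then passes to the limit in the atomic decomposition using weak-$\ast$ convergence together with the weak lower semicontinuity of the $L^2$ norm. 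In either case the substance of the proposition is entirely carried by \eqref{sobolev}, the sole role of the $DS_\beta$ structure being to make the otherwise divergent integral $\int_{\mathbb{R}^n}|\xi|^{-n}|\widehat\mu(\xi)|^2\,d\xi$ finite.
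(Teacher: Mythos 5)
Your proposal is correct and follows essentially the same route as the paper: Plancherel, pointwise domination of the symbol $(1+4\pi^2|\xi|^2)^{-n/2}$ by $(2\pi|\xi|)^{-n}$, and then the Sobolev bound \eqref{sobolev} (Theorem A of \cite{Spector-Stolyarov}) for $I_{n/2}$ with target exponent $2$. The alternative atomic reduction you sketch is not needed, and the paper does not use it here.
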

 \begin{proof}
     Let $\mu \in DS_{\beta}(\mathbb{R}^n)$. Theorem A of \cite{Spector-Stolyarov} implies the bound
     \begin{align*}
         \|I_{\frac{n}{2}}\mu\|_{L^2(\mathbb{R}^n)} \leq C \|\mu\|_{DS_{\beta}(\mathbb{R}^n)}
     \end{align*}
     for some constant $C>0$. This inequality, in combination with Plancherel's theorem yields
     \begin{align*}
         \|T_{ \frac{n}{2}}\mu\|_{L^2(\mathbb{R}^n)}^2= & \int_{\mathbb{R}^n} (1+4\pi^2|\xi|^2)^{-n/2} |\widehat{\mu}(\xi)|^2 \, d\xi \\
         \leq & \int_{\mathbb{R}^n} \left|I_{\frac{n}{2}}\mu(x) \right|^2 \, dx\\
         \leq & C \|\mu\|_{DS_{\beta}(\mathbb{R}^n)}^2,
     \end{align*}
   which completes the proof of the theorem.
 \end{proof}

\begin{proposition}\label{prop-BMO-gamma-1}
    Let $\beta\in (0, n]$. Then there exists a constant $C>0$ such that 
    \begin{align*}
        \|T_{\frac{n+1}{2}}\mu\|_{BMO(\mathbb{R}^n)} \leq C \|\mu\|_{DS_{\beta}(\mathbb{R}^n)}
    \end{align*}
    for all $\mu \in DS_{\beta}(\mathbb{R}^n)$.
\end{proposition}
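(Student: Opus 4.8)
The plan is to reduce the estimate to a uniform bound on atoms and then to exploit the explicit structure of the convolution kernel. By the definition of $DS_\beta(\mathbb{R}^n)$ (and Proposition~\ref{smooth_atoms}) it suffices to produce a constant $C=C(n)$ such that $\|T_{(n+1)/2}a\|_{BMO(\mathbb{R}^n)}\le C$ for every $\beta$-atom $a$. Indeed, given $\mu=\lim_{l\to\infty}\sum_{i=1}^{l}\lambda_{i,l}a_{i,l}$ weakly-star, linearity yields $\|T_{(n+1)/2}\bigl(\sum_{i=1}^{l}\lambda_{i,l}a_{i,l}\bigr)\|_{BMO}\le C\sum_{i=1}^{l}|\lambda_{i,l}|$; since $T_{(n+1)/2}$ is convolution with a fixed $L^1(\mathbb{R}^n)$ kernel (its low-frequency part being $\overline K_{(n+1)/2}\in L^1(\mathbb{R}^n)$ and its high-frequency part being integrable as well, by Proposition~\ref{wave-ker-esti}), the finite sums converge to $T_{(n+1)/2}\mu$ in $\mathcal{D}'(\mathbb{R}^n)$, and the $BMO$ seminorm is lower semicontinuous along this convergence because the sums are uniformly bounded in every $L^q_{\loc}$ by John--Nirenberg. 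Passing to the limit and taking the infimum over admissible decompositions gives the proposition. I note that this reduction uses only $|a|(\mathbb{R}^n)\le 1$ and the compactness of $\supp a$; neither the cancellation $a(Q)=0$ nor the $\beta$-dimensional bound on the atom will be used, which is consistent with the statement being valid for every $\beta\in(0,n]$.

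Fix now a $\beta$-atom $a$ supported in a cube $Q$. Using \eqref{Decomposition-operator}, write
\begin{align*}
T_{(n+1)/2}a=\overline T_{(n+1)/2}a+\bigl(\nu_{(n+1)/2}\ast K_{(n+1)/2}\bigr)\ast a .
\end{align*}
Since $\overline K_{(n+1)/2}\in L^1(\mathbb{R}^n)\cap L^\infty(\mathbb{R}^n)$ (see the discussion following \eqref{goodkernel}), one has $\|\overline T_{(n+1)/2}a\|_{L^\infty(\mathbb{R}^n)}\le\|\overline K_{(n+1)/2}\|_{L^\infty}\,|a|(\mathbb{R}^n)\le C$, hence $\|\overline T_{(n+1)/2}a\|_{BMO}\le 2C$. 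It remains to treat the second term, and for this I will show that the kernel $\nu_{(n+1)/2}\ast K_{(n+1)/2}$ — equivalently, the convolution kernel with Fourier transform $\psi(\xi)(1+4\pi^2|\xi|^2)^{-(n+1)/4}e^{i|\xi|}$ — belongs to $BMO(\mathbb{R}^n)$, and that convolution with the compactly supported finite measure $a$ of mass $\le 1$ does not leave $BMO$.

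The core point is therefore that $K_{(n+1)/2}\in BMO(\mathbb{R}^n)$, and likewise the regularized kernel $\nu_{(n+1)/2}\ast K_{(n+1)/2}$ (which differs from $K_{(n+1)/2}$ only by convolution with the inverse Fourier transform of a symbol that is smooth away from the origin and tends to $1$ at infinity, so the argument is the same). Proposition~\ref{wave-ker-esti} with $b=(n+1)/2$ gives, near $|x|=1$, the bounds $|K_{(n+1)/2}(x)|\lesssim 1$ and $|\nabla K_{(n+1)/2}(x)|\lesssim|1-|x||^{-1}$ — the exponent $b-\tfrac{n+1}{2}$ vanishing, so the kernel is at worst logarithmically singular transverse to the smooth compact hypersurface $\{|x|=1\}$ — while $|K_{(n+1)/2}(x)|\lesssim_N|x|^{-N}$ as $|x|\to\infty$; away from $\{|x|=1\}$ the kernel is smooth and bounded (a standard non-stationary phase fact, or directly from \cite{Miyachi-wave}). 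A function with these properties — bounded outside a fixed bounded set, rapidly decaying at infinity, with at most a logarithmic singularity across a smooth hypersurface — lies in $BMO(\mathbb{R}^n)$ with norm controlled by $n$ alone. Convolution with $a$ then preserves this: for any cube $R$, taking $c_R:=\int_{\mathbb{R}^n}\bigl(K_{(n+1)/2}\bigr)_{R-y}\,da(y)$ (a finite constant, since $K_{(n+1)/2}\in L^1_{\loc}$ and $y$ ranges over the compact set $\supp a$) and using Fubini,
\begin{align*}
\fint_R\bigl|K_{(n+1)/2}\ast a-c_R\bigr|\,dx\le\int_{\mathbb{R}^n}\Bigl(\fint_{R-y}\bigl|K_{(n+1)/2}-(K_{(n+1)/2})_{R-y}\bigr|\,dz\Bigr)d|a|(y)\le\|K_{(n+1)/2}\|_{BMO}\,|a|(\mathbb{R}^n),
\end{align*}
so $\|\bigl(\nu_{(n+1)/2}\ast K_{(n+1)/2}\bigr)\ast a\|_{BMO}\le\|\nu_{(n+1)/2}\ast K_{(n+1)/2}\|_{BMO}\le C$. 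Combining the two terms gives $\|T_{(n+1)/2}a\|_{BMO}\le C$, and the proposition follows from the reduction in the first paragraph.

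The step I expect to be the main obstacle is the verification that $K_{(n+1)/2}\in BMO(\mathbb{R}^n)$ at the critical exponent: the transverse control $|1-|x||^{\,b-(n+1)/2}$ of Proposition~\ref{wave-ker-esti} is exactly borderline here (a genuine logarithmic blow-up, not an $L^\infty$ bound and not an integrable power singularity), so one must argue carefully that this is a $BMO$ — rather than $L^\infty$ — phenomenon; one also needs the behavior of $K_{(n+1)/2}$ away from $\{|x|=1\}$, which is not recorded in the preliminaries and should be supplied by non-stationary phase or cited from \cite{Miyachi-wave}. A secondary point is that convolution by the finite measure $\nu_{(n+1)/2}$ does not destroy $BMO$: this holds because $\nu_{(n+1)/2}=\delta_0-\rho_{(n+1)/2}$ with $\rho_{(n+1)/2}$ a rapidly decaying $L^1$ function, which is precisely why I prefer to fold $\nu_{(n+1)/2}$ into the kernel and only ever convolve the resulting $BMO$ function against the compactly supported atom.
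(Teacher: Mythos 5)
Your argument reaches the correct conclusion but by a genuinely different route from the paper. The paper's proof is essentially a citation: after the decomposition \eqref{Decomposition-operator}, the low-frequency piece $\overline T_{\frac{n+1}{2}}$ maps $L^1(\mathbb{R}^n)\to L^\infty(\mathbb{R}^n)\subset BMO(\mathbb{R}^n)$, the bounded measure $\nu_{\frac{n+1}{2}}$ preserves $L^1$, and the key bound $\widetilde T_{\frac{n+1}{2}}:L^1(\mathbb{R}^n)\to BMO(\mathbb{R}^n)$ is quoted from \cite[Theorem 4.2]{Miyachi-singular}; combined with $DS_\beta(\mathbb{R}^n)\hookrightarrow L^1(\mathbb{R}^n)$ this finishes the proof, with no atoms and no use of $\beta$ --- exactly the feature you observed. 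You instead re-prove the relevant special case of Miyachi's theorem at the kernel level: your Fubini computation giving $\|K\ast a\|_{BMO}\lesssim\|K\|_{BMO}\,|a|(\mathbb{R}^n)$ is correct (and in fact makes the atomic reduction, with its weak-star limiting and lower semicontinuity discussion, unnecessary: since the total variation of $\mu$ is controlled by $\|\mu\|_{DS_\beta(\mathbb{R}^n)}$, you could convolve the kernel against $\mu$ directly), and folding $\nu_{\frac{n+1}{2}}=\delta_0+G$ with $G\in L^1(\mathbb{R}^n)$ into the kernel is harmless for the same reason. What your route buys is self-containedness, at the price of having to establish $K_{\frac{n+1}{2}}\in BMO(\mathbb{R}^n)$, which is precisely equivalent to the $L^1\to BMO$ bound the paper cites.

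That last step is where your write-up is soft, in two respects. First, the logarithmic behavior near $|x|=1$ is not literally contained in Proposition \ref{wave-ker-esti}: read at face value with $b=\frac{n+1}{2}$ and $\alpha=0$ it asserts boundedness near the sphere (which cannot be the full truth, since $\widetilde T_{\frac{n+1}{2}}$ does not map $L^1$ to $L^\infty$); the log bound at the critical index must be taken from Miyachi's Proposition 2 in \cite{Miyachi-wave}, as you indicate. Second, and more importantly, your stated criterion --- bounded outside a bounded set, rapidly decaying, with at most a logarithmic singularity across a smooth hypersurface --- does not by itself imply membership in $BMO$: a size bound $|f|\lesssim\log(1/d)$ is compatible with oscillation of amplitude $\log(1/d)$ at scale $d$ (for instance $\log(1/|t|)\sin(1/t)$ near $t=0$ fails to be in $BMO(\mathbb{R})$), so domination by a $BMO$ function is not enough. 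What rescues the claim is the derivative bound you already quoted, $|\nabla K_{\frac{n+1}{2}}(x)|\lesssim|1-|x||^{-1}$, together with the $(n-1)$-regularity of the sphere: on cubes $Q$ meeting a neighborhood of $\{|x|=1\}$ one compares $K_{\frac{n+1}{2}}$ with its value at a point at distance $\ell(Q)$ from the sphere, integrates the gradient to get $|K_{\frac{n+1}{2}}(x)-c_Q|\lesssim 1+\log\bigl(\ell(Q)/\mathrm{dist}(x,S^{n-1})\bigr)$, and uses $|\{x\in Q:\ \mathrm{dist}(x,S^{n-1})<s\}|\lesssim s\,\ell(Q)^{n-1}$ to average; cubes away from the sphere are handled by the gradient bound alone, and large cubes by the integrability and decay of the kernel. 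With that Whitney-type step made explicit (or simply replaced by the citation of Miyachi's $L^1\to BMO$ theorem, as the paper does), your proof is complete.
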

\begin{proof}
First, note that using \eqref{Decomposition-operator} we can write
\begin{align*}
T_{\frac{n+1}{2}}\mu &= \overline{T}_{\frac{n+1}{2}}\mu + \nu_{\frac{n+1}{2}} \ast \widetilde{T}_{\frac{n+1}{2}}\mu,\\
&=\overline{T}_{\frac{n+1}{2}}\mu +  \widetilde{T}_{\frac{n+1}{2}}(\nu_{\frac{n+1}{2}} \ast\mu).
\end{align*}
where $\overline{T}_{\frac{n+1}{2}}\mu$ is an Fourier multiplier operator defined in \eqref{goodkernel} which maps $L^1(\mathbb{R}^n)$ to  $L^p(\mathbb{R}^n)$ for all $1\leq p \leq \infty$ and $\nu_{\frac{n+1}{2}}$ is bounded measure on $\mathbb{R}^n$. 
As $DS_{\beta}(\mathbb{R}^n)\subset L^1(\mathbb{R}^n)$ and $L^{\infty}(\mathbb{R}^n)\subset BMO(\mathbb{R}^n)$, it is enough to prove boundedness result for the operator $\widetilde{T}_{\frac{n+1}{2}}$. The required boundedness of $\widetilde{T}_{\frac{n+1}{2}}$ follows from \cite{Miyachi-singular}*{Theorem 4.2}, which says that $\widetilde{T}_{\frac{n+1}{2}}$ maps from $L^1(\mathbb{R}^n)$ to $BMO(\mathbb{R}^n)$.     
This completes the proof of the proposition.
\end{proof}

The proof of Theorem \ref{Thm-p-bigger-2} is by complex interpolation.  That one can implement complex interpolation for an analytic family of operators with $BMO$ as an endpoint is trivial to the masters \cite{FS, Stein}, though for convenience of the reader we here provide several details not presented in those works (or other places in the literature as far as the authors are aware). The first of these additional details is the following lemma, some manifestation of which seems to be used implicitly in the proofs in \cite{FS, Stein}, and whose idea of proof was communicated to us by Po Lam Yung.

\begin{lemma}\label{pointwise_approximation}
Let $g$ be a measurable function such that the Fefferman-Stein sharp maximal function satisfies
\begin{align*}
M^\#g(x):= \sup_{r>0}\fint_{B(x,r)} \left|g(y)-\left(g\right)_{B(x,r)} \right|\, dy<+\infty
\end{align*}
for almost every $x \in \mathbb{R}^n$.  For each $l \in \mathbb{N}$, there exists a measurable function $r_\ell:\mathbb{R}^n \to [1/\ell,\ell]\cap \mathbb{Q}$ and a measurable function $\eta_\ell :\mathbb{R}^n\times \mathbb{R}^n \to \mathbb{C}$ such that for any  $f \in L^1_{loc}(\mathbb{R}^n)$ the function
\begin{align}\label{measurable_approximation}
U_\ell(f)(x):=\fint_{B(x,r_\ell(x))} \left[f(y)-\left(f\right)_{B(x,r_\ell(x))} \right] \eta_\ell(x,y)\, dy
\end{align}
is a measurable function and
\begin{align}\label{convergence}
M^\#g(x) = \lim_{l \to \infty} U_\ell(g)(x) 
\end{align}
for pointwise almost every $x \in \mathbb{R}^n$.
\end{lemma}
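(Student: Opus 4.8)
The plan is to make the supremum in $M^\#g$ into something that can be realized as a single measurable function of $x$ by a two-fold discretization: first replace the continuum of radii $r>0$ by rationals in the compact window $[1/\ell,\ell]$, and second replace the absolute value $|z| = \sup_{|w|=1}\operatorname{Re}(\bar w z)$ inside the average by a supremum over a countable dense set of unimodular constants, so that the whole inner quantity becomes $\operatorname{Re}$ of a linear functional of $f$ with a measurable ``multiplier'' $\eta_\ell(x,y)$. Concretely, for fixed $x$ write
\begin{align*}
\fint_{B(x,r)}\left|f(y)-(f)_{B(x,r)}\right|dy = \sup_{w \in \mathbb{T}\cap(\mathbb{Q}+i\mathbb{Q})} \operatorname{Re}\left(\bar w \fint_{B(x,r)}\left[f(y)-(f)_{B(x,r)}\right]dy\right)^{+},
\end{align*}
(taking positive part to keep things nonnegative) and then take the supremum over $r \in [1/\ell,\ell]\cap\mathbb{Q}$. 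This is now a countable supremum; call its value $U_\ell(g)(x)$ when $f=g$. Since each $r$ and each $w$ contributes $\fint_{B(x,r)}[\,\cdot\,](\bar w)\,dy$, the combined object $\eta_\ell(x,y)$ is built from the data $(r_\ell(x),w_\ell(x))$ achieving (or nearly achieving) the supremum, packaged as $\eta_\ell(x,y) = \bar w_\ell(x)\mathbf{1}_{B(x,r_\ell(x))}(y)\cdot|B(x,r_\ell(x))|/|B(x,r_\ell(x))|$ — i.e.\ essentially the unimodular selector times the indicator of the selected ball.

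The key steps, in order: (1) Verify that for fixed $x$ and fixed $f \in L^1_{loc}$, the map $r \mapsto \fint_{B(x,r)}[f - (f)_{B(x,r)}]\bar w\,dy$ is continuous in $r$ on $(0,\infty)$; this is where one uses $f \in L^1_{loc}$ and dominated/absolute continuity of the integral, and it lets the rational radii be dense enough to recover the full supremum over $[1/\ell,\ell]$. Monotone convergence in $\ell$ then gives \eqref{convergence}, since $M^\#g(x) = \sup_{r>0}(\cdots) = \lim_{\ell}\sup_{r \in [1/\ell,\ell]}(\cdots)$ by continuity in $r$ and the definition of supremum over $(0,\infty)$. (2) Produce a \emph{measurable} selector $r_\ell:\mathbb{R}^n \to [1/\ell,\ell]\cap\mathbb{Q}$ (and likewise $w_\ell$): enumerate the countable index set $\{(r,w)\}$ as $\{(r^{(k)},w^{(k)})\}_{k\geq 1}$, let $V_{\ell,k}(g)(x)$ be the $k$-th candidate value, and set $r_\ell(x) = r^{(k(x))}$ where $k(x)$ is the least $k$ with $V_{\ell,k}(g)(x) > U_\ell(g)(x) - 1/\ell$ (such $k$ exists since $U_\ell$ is the sup); each sublevel/superlevel set defining $k(x)$ is measurable because each $V_{\ell,k}(g)$ is a measurable function of $x$, hence $k(x)$ and therefore $r_\ell, w_\ell$ are measurable. (3) Check that with $\eta_\ell(x,y):= \bar w_\ell(x)\,\mathbf{1}_{B(x,r_\ell(x))}(y)$ the formula \eqref{measurable_approximation} reproduces $V_{\ell,k(x)}(f)(x)$ and is jointly measurable in $(x,y)$ — joint measurability of $(x,y)\mapsto \mathbf{1}_{B(x,r_\ell(x))}(y) = \mathbf{1}_{\{|x-y|<r_\ell(x)\}}$ follows from measurability of $x\mapsto r_\ell(x)$ and continuity of $(x,y)\mapsto|x-y|$, and $x\mapsto\bar w_\ell(x)$ is measurable — so that $U_\ell(f)$ is a measurable function for every $f\in L^1_{loc}$. (4) Finally, confirm $\lim_\ell U_\ell(g)(x) = M^\#g(x)$ a.e.: the lower bound $U_\ell(g)(x) \geq U_\ell(g)(x) - 1/\ell \geq$ (nothing lost) and the construction give $U_\ell(g)(x) \uparrow \sup_{r>0,w}(\cdots)^+ = M^\#g(x)$, using that the positive part is harmless since the $w = $ (phase of the average) term is already $\geq 0$.

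The main obstacle I anticipate is step (2) together with the joint measurability in step (3): one must be careful that the selection of the near-optimal radius is done from a \emph{countable} candidate set so that ``least index'' gives a genuine Borel/Lebesgue-measurable selector (avoiding any appeal to measurable selection theorems for uncountable families), and one must verify that $x \mapsto \fint_{B(x,r)}[f(y)-(f)_{B(x,r)}]\,dy$ is measurable for each \emph{fixed} rational $r$ — this is routine from Fubini applied to $(x,y)\mapsto f(y)\mathbf{1}_{B(x,r)}(y)$ but needs to be stated. A secondary subtlety is the continuity in $r$ in step (1) at the level of a single ball average: one should note $\fint_{B(x,r)}f \to \fint_{B(x,r_0)}f$ as $r\to r_0$ by absolute continuity of $y\mapsto f(y)$ on the annular difference, which requires nothing beyond $f\in L^1_{loc}$. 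Once these measurability points are nailed down, the convergence \eqref{convergence} is immediate from the definition of supremum and monotone convergence, and the lemma is proved.
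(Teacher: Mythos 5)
There is a fatal flaw at the very first step of your linearization. The identity you write,
\begin{align*}
\fint_{B(x,r)}\left|f(y)-(f)_{B(x,r)}\right|dy \;=\; \sup_{w \in \mathbb{T}\cap(\mathbb{Q}+i\mathbb{Q})} \operatorname{Re}\left(\bar w \fint_{B(x,r)}\left[f(y)-(f)_{B(x,r)}\right]dy\right)^{+},
\end{align*}
is false, because the inner integral vanishes identically: by the definition of the average, $\fint_{B(x,r)}\left[f(y)-(f)_{B(x,r)}\right]dy = 0$ for every $x$, $r$ and every $f\in L^1_{loc}$. The dual identity $|z|=\sup_{|w|=1}\operatorname{Re}(\bar w z)$ only moves an absolute value off a single scalar; here the absolute value sits \emph{inside} the integral, and a phase $w$ that is constant in $y$ can only recover $\bigl|\fint [f-(f)_{B}]\,dy\bigr|=0$, not $\fint |f-(f)_{B}|\,dy$. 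Consequently, with your choice $\eta_\ell(x,y)=\bar w_\ell(x)\mathbf{1}_{B(x,r_\ell(x))}(y)$ the operator in \eqref{measurable_approximation} satisfies $U_\ell(f)\equiv 0$ for every $f$, and the convergence \eqref{convergence} fails whenever $M^\#g$ is not a.e.\ zero. The correct linearization requires the unimodular multiplier to depend on $y$: one must take $\eta_\ell(x,y)$ to be the pointwise phase $\bigl(g(y)-(g)_{B(x,r_\ell(x))}\bigr)/\bigl|g(y)-(g)_{B(x,r_\ell(x))}\bigr|$ (set to $0$ where the denominator vanishes), which is what the paper does; its joint measurability follows directly from that of $(x,y)\mapsto g(y)-(g)_{B(x,r_\ell(x))}$, so no countable dense family of phases is needed at all.

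The remainder of your outline — discretizing the radii to $[1/\ell,\ell]\cap\mathbb{Q}$, selecting a near-optimal radius by ``least index'' over a countable enumeration to get a measurable selector $r_\ell$, Fubini for the measurability of $x\mapsto \fint_{B(x,r)}|g-(g)_{B(x,r)}|\,dy$ at fixed rational $r$, and continuity in $r$ of the ball averages to pass from rational radii to the full supremum — matches the paper's treatment of the radius variable and is sound. But the phase discretization is not merely an inessential variant: it is the step that destroys the quantity you are trying to approximate, so the proof as proposed does not establish the lemma.
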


\begin{proof}
For each $\ell \in \mathbb{N}$, define $\Lambda_\ell:= [1/\ell,\ell]\cap \mathbb{Q}$ and the corresponding truncated maximal function
\begin{align*}
M^\#_\ell g(x) = \sup_{r \in \Lambda_\ell}  \fint_{B(x,r)} |g(y)-\left(g\right)_{B(x,r)}|\, dy.
\end{align*}

Let $\{r_i^\ell\}$ be an enumeration of $\Lambda_\ell$ and define $r_\ell(x) = r_1^\ell$ if 
\begin{align}\label{almost_optimizer}
\fint_{B(x,r_1^\ell)} |g(y)-\left(g\right)_{B(x,r_1^\ell)}|\, dy > \left(1-\frac{1}{\ell}\right) M^\#_\ell g(x).
\end{align}
Note that the set of all $x$ for which $r_\ell(x) = r_1^\ell$ is measurable, by the measurability of the two maps 
\begin{align*}
x &\mapsto \fint_{B(x,r_1^\ell)} |g(y)-\left(g\right)_{B(x,r_1^\ell)}|\, dy\\
x &\mapsto \sup_{r \in \Lambda_\ell} \fint_{B(x,r)} |g(y)-\left(g\right)_{B(x,r)}|\, dy.
\end{align*}
Denote by $\Omega_1^\ell$ the set of all such $x$ selected in this step.  Next consider the set of all $x \notin \Omega_1^\ell$ for which
\begin{align*}
\fint_{B(x,r_2^\ell)} |g(y)-\left(g\right)_{B(x,r_2^\ell)}|\, dy > \left(1-\frac{1}{\ell}\right) M^\#_\ell g(x).
\end{align*}
This is again a measurable set. Inductively, one defines $r_k^\ell, \Omega_k^\ell$  in terms of $k$, the smallest positive integer such that
\begin{align}\label{almost_optimizer_k}
\fint_{B(x,r_k^\ell)} |g(y)-\left(g\right)_{B(x,r_k^\ell)}|\, dy > \left(1-\frac{1}{\ell}\right) M^\#_\ell g(x).
\end{align}
As $r_\ell(x)$ takes only countably many values, i.e. 
\begin{align*}
r_\ell(x) = \sum_{k=1}^{\infty} r^\ell_k 1_{\Omega^\ell_k}(x),
\end{align*}
we see that $r_\ell(x)$ is measurable as a function of $x$. Finally define 
\begin{align*}
\eta_\ell(x,y):= \frac{g(y)-\left(g\right)_{B(x,r_\ell)}}{|g(y)-\left(g\right)_{B(x,r_\ell)}|}
\end{align*}
if $|g(y)-\left(g\right)_{B(x,r_\ell)}| \neq 0$ and $0$ otherwise.  The measurability of $\eta_\ell$ follows from the measurability of $(x,y) \mapsto g(y)-\left(g\right)_{B(x,r_\ell)}$.  In particular, for any locally integrable function $f$,  the map defined by \eqref{measurable_approximation} is a measurable function.  Therefore it finally remains to show that we have the claimed convergence \eqref{convergence}.  Noting that
\begin{align*}
M^\# g(x) = \lim_{\ell \to \infty} M^\#_\ell g(x),
\end{align*}
we have
\begin{align*}
M^\# g(x) &= \lim_{\ell \to \infty} M^\#_\ell g(x) \\
&\leq \liminf_{\ell \to \infty} \frac{\ell}{\ell-1} \fint_{B(x,r_\ell(x))} \left|g(y)-\left(g\right)_{B(x,r_\ell(x))} \right| \, dy\\
&=\liminf_{\ell \to \infty}  \fint_{B(x,r_\ell(x))} \left[g(y)-\left(g\right)_{B(x,r(x))} \right] \eta_\ell(x,y)\, dy\\
&=\liminf_{\ell \to \infty}  U_\ell(g)(x)\\
&=\limsup_{\ell \to \infty} U_\ell(g)(x)\\
&\leq M^\# g(x).
\end{align*}

\end{proof}

\begin{proof}[Proof of Theorem \ref{Thm-p-bigger-2}]
Let $\beta\in (0,n]$. We know by Theorem \ref{M_L1} that the operator $T_b$ is bounded from $L^1(\mathbb{R}^n)$ to  $L^p(\mathbb{R}^n)$ for $1\leq p \leq \infty$ and  for $b>b_p=\frac{n+1}{2}-\frac{1}{p}$. Since $DS_{\beta}(\mathbb{R}^n)\subset L^1(\mathbb{R}^n)$, it is enough to prove the Theorem \ref{Thm-p-bigger-2} for $b=b_p$. For brevity of notation we suppress the dependence in $p$ and write $b$ for ${b_p}$.
By Proposition \ref{smooth_atoms}, it suffices to prove that there exists a constant $C>0$ independent of the atom such that
    \begin{align*}
        \|T_b(a)\|_{L^p(\mathbb{R}^n)} \leq C
    \end{align*}
for any $\beta$-atom $a \in C^\infty_c(\mathbb{R}^n)$.  

We argue by complex interpolation with an adaptation of the proof in \cite[5.2 on p.~175]{Stein}.  Let $S$ denote the closed strip $\{z\in \mathbb{C}: 0\leq Re(z)\leq 1\}$.  For $f \in L^2(\mathbb{R}^n)$, define the family of operators $\{\mathcal{T}_z\}_{z\in S}$ by 
    \begin{align*}
        \mathcal{T}_zf(x)= \int_{\mathbb{R}^n}  \widehat{f}(\xi) (1+|\xi|^2)^{-\frac{n+z}{4}} e^{i|\xi|} e^{2\pi i \xi\cdot x} \, d\xi.
    \end{align*}
    Then
    \begin{align*}
   \mathcal{T}_z :  L^2(\mathbb{R}^n) \to  L^2(\mathbb{R}^n),
    \end{align*}
and for $f, h \in L^2(\mathbb{R}^n)$ the map
\begin{align}\label{analytic_one}
    z \mapsto \int_{\mathbb{R}^n}  \mathcal{T}_z(f) h\;dx
    \end{align}
is admissible in the sense of \cite[V.4 on p.~205] {SteinWeiss}, i.e.~it is continuous on $S$, analytic in the interior, and setting $z=s+it$, the quantity
\begin{align}\label{analytic_two}
e^{-|t|} \log \left|\int_{\mathbb{R}^n}  \mathcal{T}_z(f) h\;dx\right|
    \end{align}
is uniformly bounded above.

For $\ell \in \mathbb{N}$, let $r_\ell$ be the maps given by \eqref{pointwise_approximation} applied to the function $g=\mathcal{T}_\theta f= T_{\frac{n+\theta}{2}}f$ and consider the associated maps
\begin{align}\label{sharp_linearization}
    U_\ell(\mathcal{T}_zf)(x)= \fint_{B(x,r_\ell(x))} \left[\mathcal{T}_zf(y)-\left(\mathcal{T}_zf\right)_{B(x,r_\ell(x))} \right] \eta_\ell(x,y)\, dy.
\end{align}
Observe that the definition of the centered Fefferman-Stein sharp maximal function along with Lemma \ref{pointwise_approximation} ensure that
\begin{align}
    \left| U_\ell (\mathcal{T}_zf)(x)\right|&\leq  M^{\#}(\mathcal{T}_zf)(x), \quad \text{for all } x \in \mathbb{R}^n, z \in [0,1],\label{sharp-maximal-domination}\\
    \lim_{\ell \to \infty} \left| U_\ell (\mathcal{T}_\theta f)(x)\right| &= M^{\#}(\mathcal{T}_\theta f)(x), \quad \text{for a.e. } x \in \mathbb{R}^n.\label{sharp-maximal-attained}
\end{align}

We next show that for each $\ell \in \mathbb{N}$ one has
\begin{align}\label{intermediate_inequality}
    \|U_\ell(\mathcal{T}_\theta f)\|_{L^p(\mathbb{R}^n)} \leq C\|T_{ n/2}f\|_{L^2(\mathbb{R}^n)}^{1-\theta} \|f\|_{L^1(\mathbb{R}^n)}^\theta, \quad \theta= 1-\frac{2}{p}
\end{align}
for any simple function $f$. It is sufficient to prove 
\begin{align*}
    \left| \int_{\mathbb{R}^n} U_\ell(\mathcal{T}_\theta f)(x) \overline{h(x)}\;dx \right| < C\|T_{ n/2}f\|_{L^2(\mathbb{R}^n)}^{1-\theta} \|f\|_{L^1(\mathbb{R}^n)}^\theta
\end{align*}
where $h$ is an arbitrary simple function with $\|h\|_{L^{p'}(\mathbb{R}^n)}\leq 1$ and $C$ is a positive constant independent of $f$.  Let $h= \sum_{j}a_j e^{i \alpha_j}\chi_{E_j}$ for $a_j \in \mathbb{R}^+, \alpha_j \in \mathbb{R}$, and $z\in S$, set
\begin{align*}
    h_z=\sum_{j} (a_j)^{\frac{(1+z)p'}{2}} e^{i \alpha_j} \chi_{E_j}. 
\end{align*}

Then one can easily verify $h_\theta=h$ and 
\begin{align*}
    \|h_{it}\|_{L^2(\mathbb{R}^n)} &= \sum_{j} (a_j)^{p'} |E_j|= \|h\|_{L^{p'}(\mathbb{R}^n)}\leq 1,\\
    \|h_{1+it}\|_{L^1(\mathbb{R}^n)} &= \|h\|_{L^{p'}(\mathbb{R}^n)}\leq 1.
\end{align*}

Define $I_\ell(z): S \mapsto \mathbb{C}$ by
\begin{align*}
    I_\ell(z)= \int_{\mathbb{R}^n} U_\ell(\mathcal{T}_z f)(x) \overline{h_z(x)}\;dx.
\end{align*}
As in \cite[p.~176]{Stein}, for each $\ell \in \mathbb{N}$, $I_\ell(z)$ is holomorphic.  To see this, one begins by expanding
\begin{align*}
    I_\ell(z)&= \sum_j (a_j)^{\frac{(1+z)p'}{2}} e^{-i \alpha_j} \int_{\mathbb{R}^n} \fint_{B(x,r_\ell(x))} \left[\mathcal{T}_zf(y)-\left(\mathcal{T}_zf\right)_{B(x,r_\ell(x))} \right] \eta_\ell(x,y)\, dy \;\chi_{E_j}(x)\,dx.
\end{align*}
For each $j$, we note that
\begin{align*}
& \int_{\mathbb{R}^n} \fint_{B(x,r_\ell(x))} \left[\mathcal{T}_zf(y)-\left(\mathcal{T}_zf\right)_{B(x,r_\ell(x))} \right] \eta_\ell(x,y)\, dy \;\chi_{E_j}(x)\,dx\\
    &=\int_{\mathbb{R}^n} \int_{\mathbb{R}^n} \frac{1}{c_n r_\ell(x)^n} \chi_{B(x,r_\ell(x))}(y) \mathcal{T}_zf(y) \eta_\ell(x,y) \chi_{E_j}(x)\;dydx\\
    & - \int_{\mathbb{R}^n} \left( \int_{\mathbb{R}^n} \frac{\chi_{B(x,r_\ell(x))}(y)}{c_n r_\ell(x)^n} \mathcal{T}_zf(y) \, dy\right) \int_{\mathbb{R}^n} \frac{\chi_{B(x,r_\ell(x))}(w)}{c_n r_\ell(x)^n} \eta_\ell(x,w) \, dw \, \chi_{E_j}(x) \, dx\\
    & = \int_{\mathbb{R}^n} \mathcal{T}_zf(y) \left[ \int_{\mathbb{R}^n} \chi_{E_j}(x) \left(\frac{\chi_{B(x,r_\ell(x))}(y)}{c_n r_\ell(x)^n}  \eta_\ell(x,y) -\right.\right.\\
    & \quad \quad \quad \quad \quad \quad \quad \quad \left.\left.\frac{\chi_{B(x,r_\ell(x))}(y)}{c_n r_\ell(x)^n} \int_{\mathbb{R}^n} \frac{\chi_{B(x,r_\ell(x))}(w)}{c_n r_\ell(x)^n} \eta_\ell(x,w) \, dw \right)\, dx \right] \, dy\\
    & = \int_{\mathbb{R}^n} \mathcal{T}_zf(y) \, G_j(y) \, dy
\end{align*}
where $G_j(y)$ is given by
\begin{align*}
\int_{\mathbb{R}^n} \chi_{E_j}(x) \left(\frac{\chi_{B(x,r_\ell(x))}(y)}{c_n r_\ell(x)^n}  \eta_\ell(x,y) -\frac{\chi_{B(x,r_\ell(x))}(y)}{c_n r_\ell(x)^n} \int_{\mathbb{R}^n} \frac{\chi_{B(x,r_\ell(x))}(w)}{c_n r_\ell(x)^n} \eta_\ell(x,w) \, dw \right)\, dx.
\end{align*}
Note that $G_j$ is in $L^{\infty}_0(\mathbb{R}^n)$. The admissibility of \eqref{analytic_one} thus gives the admissibility of
\begin{align*}
z\mapsto \int_{\mathbb{R}^n} \mathcal{T}_zf(y) \, G_j(y) \, dy,
\end{align*}
and therefore also $I_\ell(z)$ as a finite sum of products of these functions with bounded holomorphic functions. 

By H\"older's inequality we have the bound
\begin{align*}
    \left|I_\ell(it)\right| &\leq  \|U_\ell(\mathcal{T}_{it}f)\|_{L^2(\mathbb{R}^n)} \|h_{it}\|_{L^2(\mathbb{R}^n)}\\
    &\lesssim  \|M^{\#}(\mathcal{T}_{it}f)\|_{L^2(\mathbb{R}^n)}\\
    &\lesssim  \|\mathcal{T}_{it}f\|_{L^2(\mathbb{R}^n)}\\
    &\leq\|(I-\Delta)^{\frac{it}{2}}\|_{L^2(\mathbb{R}^n)\rightarrow L^2(\mathbb{R}^n) } \|T_{ n/2}f\|_{L^2(\mathbb{R}^n)}\\
    &\lesssim \|T_{ n/2}f\|_{L^2(\mathbb{R}^n)}.
\end{align*}

Similarly, using Holder's inequality and Proposition \ref{prop-BMO-gamma-1}, we obtain
\begin{align*}
    \left|I_\ell(1+it)\right|&\leq  \|U_\ell(\mathcal{T}_{1+it}f)\|_{L^\infty(\mathbb{R}^n)} \|h_{1+it}\|_{L^1(\mathbb{R}^n)}\\
    &\leq  \|M^{\#}(\mathcal{T}_{1+it}f)\|_{L^\infty(\mathbb{R}^n)}\\
    &\leq  \|\mathcal{T}_{1+it}a\|_{BMO(\mathbb{R}^n)}\\
    &\leq  \|(I-\Delta)^{\frac{it}{2}}\|_{BMO(\mathbb{R}^n)\rightarrow BMO(\mathbb{R}^n) } \|T_{ \frac{n+1}{2}}f\|_{BMO(\mathbb{R}^n)}\\
    &\leq  \|(I-\Delta)^{\frac{it}{2}}\|_{BMO(\mathbb{R}^n)\rightarrow BMO(\mathbb{R}^n) }\|f\|_{L^1(\mathbb{R}^n)}.
\end{align*}
Note that the Mihlin-H\"ormander multiplier theorem implies
\begin{align}
\|(I-\Delta)^{\frac{it}{2}}\|_{BMO(\mathbb{R}^n)\rightarrow BMO(\mathbb{R}^n) }=\|(I-\Delta)^{\frac{it}{2}}\|_{\mathcal{H}^1(\mathbb{R}^n)\rightarrow \mathcal{H}^1(\mathbb{R}^n) } \lesssim (1+|t|)^{n/2+1}
\end{align}
and therefore, by \cite[Lemma 4.2 on p.~206]{SteinWeiss} we deduce that
\begin{align*}
\log |I_\ell(\theta)| &\leq \log C\|T_{ n/2}f\|_{L^2(\mathbb{R}^n)}^{1-\theta}+  \log \|f\|_{L^1(\mathbb{R}^n)}^{\theta} \\
&\quad + \frac{1}{2}\sin(\pi \theta) \int_{-\infty}^\infty \frac{ \log \|(I-\Delta)^{\frac{it}{2}}\|_{BMO(\mathbb{R}^n)\rightarrow BMO(\mathbb{R}^n)} }{\cosh(\pi t)+\cos(\pi \theta)}\;dt.
\end{align*}
The claimed inequality \eqref{intermediate_inequality} follows by manipulation of the preceding inequality and taking the supremum over $h \in L^{p'}(\mathbb{R}^n)$.

From the inequality \eqref{intermediate_inequality}, the pointwise convergence \eqref{sharp-maximal-attained}, and Fatou's lemma we deduce
\begin{align*}
    \|M^{\#}(T_{\frac{n+\theta}{2}}f)\|_{L^p(\mathbb{R}^n)} \leq C\|T_{ n/2}f\|_{L^2(\mathbb{R}^n)}^{1-\theta} \|f\|_{L^1(\mathbb{R}^n)}^{\theta}, \quad 2< p<\infty , \,\,\theta= 1-\frac{2}{p}
\end{align*}
for all simple functions $f$.  The known lower bound for the Fefferman-Stein maximal function (see e.g. \cite[Theorem 2 on p.~148]{Stein} or \cite{Seeger-Linear} for an argument which gives linear dependence in $p$ for even the estimate between the $L^p$-norms of the Hardy-Littlewood and Fefferman-Stein maximal functions) thus yields
\begin{align*}
    \|T_{ b}f\|_{L^p(\mathbb{R}^n)} \leq C\|T_{ n/2}f\|_{L^2(\mathbb{R}^n)}^{1-\theta} \|f\|_{L^1(\mathbb{R}^n)}^{\theta} \quad \text{for} \quad 2< p<\infty,
\end{align*}
for all simple functions $f$.  Finally, if $a \in C^\infty_c(\mathbb{R}^n)$ is a $\beta$-atom, we may find a sequence of simple functions $\{f_k\}$ such that $f_k\to a$ pointwise, in $L^1(\mathbb{R}^n)$, in $L^2(\mathbb{R}^n)$ and such that $\widehat{f}_k\to \widehat{a}$ in $L^2(\mathbb{R}^n)$.  By Plancherel's identity
\begin{align*}
\lim_{k \to \infty} \|T_{ n/2}f_k\|_{L^2(\mathbb{R}^n)} &= \lim_{k \to \infty} \int_{\mathbb{R}^n} (1+4\pi^2|\xi|^2)^{-n/2} |\widehat{f}_k(\xi)|^2\;d\xi \\
&=   \int_{\mathbb{R}^n} (1+4\pi^2|\xi|^2)^{-n/2} |\widehat{a}(\xi)|^2\;d\xi \\
&=  \|T_{ n/2}a\|_{L^2(\mathbb{R}^n)},
\end{align*}
which in combination with the preceding bounds for $f_k$ yield
\begin{align*}
\limsup_{k \to \infty} \|T_{ b}f_k\|_{L^p(\mathbb{R}^n)} \leq C\|T_{ n/2}a\|_{L^2(\mathbb{R}^n)}^{1-\theta} \|a\|_{L^1(\mathbb{R}^n)}^{\theta}.
\end{align*}
This inequality, the embedding $DS_\beta(\mathbb{R}^n) \hookrightarrow L^1(\mathbb{R}^n)$, and Proposition \ref{prop-L2-gamma-1} together imply
\begin{align*}
\limsup_{k \to \infty} \|T_{ b}f_k\|_{L^p(\mathbb{R}^n)} \leq C.
\end{align*}
Finally, $a \in C^\infty_c(\mathbb{R}^n)$ implies that one has pointwise convergence
\begin{align*}
\lim_{k \to \infty} T_{ b}f_k = T_ba,
\end{align*}
so that Fatou's lemma gives the claimed inequality.  This completes the proof of the theorem.
\end{proof}

\section{The estimate for \texorpdfstring{$1\leq p <2$}{p less 2}}\label{p_small_than_2}
In this section, we will give the proofs of Theorems \ref{Thm-p-less-2} and \ref{Thm-p-less-2-2}.

 \begin{lemma}\label{LP-arguments}
     Let $n\geq 2$ and $p\in [1, 2]$.  Let $a$ be a $\beta$-atom adapted to a cube $Q$. Then for $\beta \in (n-1, n]$, there exists a constant $C=C(n,p,\beta)$ such that 
    \begin{align*}
        \|\widetilde{T}_{b_p}(a)\|_{L^p(\mathbb{R}^n)} \leq C \ell(Q)^{\left( \frac{1-n}{p'-(n-\beta)}\right)}.
    \end{align*} 
 \end{lemma}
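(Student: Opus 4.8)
The plan is to decompose $\widetilde{T}_{b_p}(a) = \sum_{j \in \mathbb{Z}} \widetilde{T}_{b_p}^j(a)$ using the Littlewood-Paley projections introduced above, and to estimate the $L^p$ norm of each piece separately, balancing two competing bounds according to whether $2^j \ell(Q)$ is large or small. For the ``low-frequency'' range $2^j \lesssim \ell(Q)^{-1}$, I would use the second projector identity \eqref{second_projector} to write $\widetilde{T}_{b_p}^j(a) = K_{b_p}^j \ast \widetilde{P}_j a$, apply Young's inequality in the form $\|K_{b_p}^j \ast \widetilde{P}_j a\|_{L^p} \leq \|K_{b_p}^j\|_{L^p} \|\widetilde{P}_j a\|_{L^1} \lesssim \|K_{b_p}^j\|_{L^p} \|a\|_{L^1}$, and invoke Lemma \ref{kerne--esti-1} with $q=p$ (and $b=b_p$) together with $|a|(\mathbb{R}^n)\le 1$; this gives a bound of the form $2^{j(\frac{n+1}{2}-\frac1p - b_p)} = 2^{j(\frac12 - \frac1p + \frac1p)} \cdot 2^{-j/\ldots}$, i.e. a geometric series in $2^j$ with the right sign of exponent to sum over the low range. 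For the ``high-frequency'' range $2^j \gtrsim \ell(Q)^{-1}$, Young's inequality with the crude $L^1$ input is too lossy, so instead I would exploit the cancellation and smoothness of the atom: use the $L^\infty$ condition on the heat extension of $a$ (equivalently a bound on $\|\widetilde{P}_j a\|$ in a suitable norm), or interpolate between an $L^2$ bound via Plancherel — $\|\widetilde{T}_{b_p}^j a\|_{L^2} \lesssim 2^{-jb_p}\|\widehat{\widetilde P_j a}\|_{L^2}$ combined with the decay $\|e^{t\Delta}a\|_{L^\infty}\lesssim t^{-(n-\beta)/2}\ell(Q)^{-\beta}$ evaluated at $t \sim 2^{-2j}$ — and Young's inequality to produce a bound that decays as $2^j \to \infty$, again summable over the high range.

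The optimization over the splitting point $2^{j_0} \sim \ell(Q)^{-1}$ (or more precisely the value of $j$ where the two geometric series are balanced) should produce the stated exponent $\ell(Q)^{(1-n)/(p'-(n-\beta))}$; the appearance of $p' - (n-\beta)$ in the denominator is the signature of the crossover between the two regimes, and the condition $\beta > n-1$ is exactly what guarantees $p' - (n-\beta) > p' - 1 > 0$ when $p \le 2$, so that the exponent is well-defined and the summation converges. I would carry out the computation by first writing both bounds as $C_1 2^{j\gamma_1}$ and $C_2 \ell(Q)^{-\delta} 2^{-j\gamma_2}$ with $\gamma_1, \gamma_2 > 0$, summing the first over $j \le j_0$ and the second over $j > j_0$, each summation dominated by its boundary term $j = j_0$, and then choosing $j_0$ so that $2^{j_0 \gamma_1} \sim \ell(Q)^{-\delta} 2^{-j_0 \gamma_2}$, i.e. $2^{j_0} \sim \ell(Q)^{-\delta/(\gamma_1+\gamma_2)}$, and substituting back.

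The main obstacle I anticipate is obtaining the correct high-frequency estimate with the right power of $\ell(Q)$: one must decide precisely which norm of $\widetilde{P}_j a$ to use and how to feed in the atom's heat-extension bound (condition \eqref{linfty} in the definition of $\beta$-atom), since a naive $L^1$–$L^1$ or $L^2$–$L^2$ application will either lose the $\ell(Q)$-dependence or yield the wrong exponent. The key will be to use the dual pairing $\|\widetilde{T}_{b_p}^j a\|_{L^p} \le \|K_{b_p}^j\|_{L^{r}} \|\widetilde{P}_j a\|_{L^s}$ with $\tfrac1p = \tfrac1r + \tfrac1s - 1$ chosen so that the $L^s$ norm of the frequency-localized atom is controlled by $2^{j(n/s' )}\ell(Q)^{n/s'}\cdot\ell(Q)^{-\beta}$-type quantities coming from \eqref{linfty} and the support condition, after which Lemma \ref{kerne--esti-1} supplies the kernel factor; tracking these exponents carefully and verifying that the constraint $\beta \in (n-1,n]$ is exactly what makes the exponent on $\ell(Q)$ come out as claimed (and makes both geometric series converge) is where the real work lies.
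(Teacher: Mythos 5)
Your overall architecture (Littlewood--Paley decomposition of $\widetilde{T}_{b_p}$, two regimes split at a cutoff chosen by balancing, with the heat-extension information on $\widetilde{P}_j a$ driving the high-frequency sum) matches the paper, and your high-frequency treatment is essentially the paper's: there one interpolates $\|\widetilde{T}^j_{b_p}a\|_{L^1}\le\|K^j_{b_p}\|_{L^1}\|a\|_{L^1}$ against $\|\widetilde{T}^j_{b_p}a\|_{L^\infty}\le\|K^j_{b_p}\|_{L^1}\|\widetilde{P}_j a\|_{L^\infty}$, using $\|\widetilde{P}_j a\|_{L^\infty}\lesssim 2^{j(n-\beta)}\ell(Q)^{-\beta}$ as in \eqref{L-P-esti} (the quantitative form of your ``evaluate the heat bound at $t\sim 2^{-2j}$''), and the condition $\beta>n-1$ enters as summability of $\sum_{j>k}2^{-j(1-\frac1p)(1+\beta-n)}$, not through positivity of $p'-(n-\beta)$.

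The genuine gap is your low-frequency estimate. With $b=b_p$, Lemma \ref{kerne--esti-1} with $q=p$ gives $\|K^j_{b_p}\|_{L^p}\lesssim 2^{j(\frac{n+1}{2}-\frac1p-b_p)}=2^{0}$, so plain Young's inequality against $\|a\|_{L^1}\le1$ yields a per-term bound that is $O(1)$ uniformly in $j$: the exponent is exactly zero, there is no geometric series of ``the right sign,'' the sum over $j\le j_0$ is not dominated by its boundary term, and, decisively, no positive power of $\ell(Q)$ is produced, so no choice of cutoff can generate the factor $\ell(Q)^{(1-n)/(p'-(n-\beta))}$ --- which must be small precisely when $\ell(Q)$ is large, the regime in which the lemma is invoked in the proof of Theorem \ref{Thm-p-less-2}. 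What your plan never uses is the moment condition $a(Q)=0$, and that is exactly the missing ingredient: the paper writes $\widetilde{T}^j_{b_p}a(x)=\int\bigl(K^j_{b_p}(x-y)-K^j_{b_p}(x)\bigr)a(y)\,dy$, so that $\|\widetilde{T}^j_{b_p}a\|_{L^1}\lesssim\ell(Q)\|\nabla K^j_{b_p}\|_{L^1}\lesssim\ell(Q)2^{j/p}$, and combines this with $\|\widetilde{T}^j_{b_p}a\|_{L^\infty}\lesssim\|K^j_{b_p}\|_{L^\infty}\lesssim2^{j/p}$ and $L^1$--$L^\infty$ interpolation to get the per-term bound $\ell(Q)^{1/p}2^{j/p}$; this growing geometric series sums over $j\le k$ to $\approx\ell(Q)^{1/p}2^{k/p}$, and only then does balancing against the high-frequency tail produce the stated power of $\ell(Q)$. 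Without this cancellation step your proposal cannot reach the claimed estimate.
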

 \begin{proof}
 Let $n\geq 2$ and $p\in [1, 2]$.  Let $\beta \in (n-1,n]$ and $a$ be a $\beta$-atom adapted to a cube $Q$.
 
By \eqref{partition of unity}, we can write 
 $$\widetilde{T}_{b_p}=\sum_{j\geq 1} \widetilde{T}_{b_p}^j$$
 where $\widetilde{T}_{b_p}^j$ is a convolution operator with kernel $K_b^j$ defined in \eqref{def-Tj}.
 
     First, we decompose
\begin{align*}
    \|\widetilde{T}_{b_p}(a)\|_{L^p(\mathbb{R}^n)} &\leq \sum_{j\in \mathbb{Z}} \|\widetilde{T}_{b_p}^{j}(a)\|_{L^p(\mathbb{R}^n)} \\
    &\leq  \sum_{j\leq k} \|\widetilde{T}_{b_p}^{j}(a)\|_{L^p(\mathbb{R}^n)} + \sum_{j>k} \|\widetilde{T}_{b_p}^{j}(a)\|_{L^p(\mathbb{R}^n)}\\
    &= \mathcal{I}+ \mathcal{II},
\end{align*}
where $k$ is a positive number that we choose later.

Using the cancellation condition of the atom and Lemma \ref{kerne--esti-1}, we have
\begin{align}\label{L1-esti-grad}
    \nonumber\|\widetilde{T}_{b_p}^{j}(a)\|_{L^1(\mathbb{R}^n)}& = \int_{\mathbb{R}^n}\left| \int K_{b_p}^j(x-y)a(y)\,dy-K_{b_p}(x)\int a(y)\, dy\right| \, dx\\
    &=\int_{\mathbb{R}^n}\left| \int \left(K_{b_p}^j(x-y)-K_{b_p}(x)\right)a(y)\,dy\right| \, dx\\
    \nonumber& \leq  \ell(Q) \|\nabla K_{ b}^j\|_{L^1(\mathbb{R}^n)} \|a\|_{L^1(\mathbb{R}^n)}\\
    \nonumber & \lesssim 2^{j/p} \ell(Q).
\end{align}

Another application of Lemma \ref{kerne--esti-1} gives 
\begin{align}\label{Linfinity-esti}
  \|\widetilde{T}_{b_p}^{j}(a)\|_{L^\infty(\mathbb{R}^n)}  \leq \|K_{b_p}^{j}\|_{L^\infty(\mathbb{R}^n)}\|a\|_{L^1(\mathbb{R}^n)} \lesssim 2^{j/p}.
\end{align}

The estimates \eqref{L1-esti-grad} and \eqref{Linfinity-esti} together imply
\begin{align}\label{I}
   \nonumber \mathcal{I}= \sum_{j\leq k} \|\widetilde{T}_{b_p}^{j}(a)\|_{L^p(\mathbb{R}^n)} & \leq  \sum_{j\leq k} \|\widetilde{T}_{b_p}^{j}(a)\|_{L^1(\mathbb{R}^n)}^{1/p} \|\widetilde{T}_{b_p}^{j}(a)\|_{L^\infty(\mathbb{R}^n)}^{1-\frac{1}{p}}\\
 & \lesssim  \sum_{j\leq k} \left(\ell(Q) \|\nabla K_{ b}^j\|_{L^1(\mathbb{R}^n)}\right)^{1/p} \|K_{b_p}^{j}\|_{L^\infty(\mathbb{R}^n)}^{1-\frac{1}{p}}\\  
    & \lesssim  \sum_{j\leq k} 2^{\frac{j}{p^2}} \ell(Q)^{\frac{1}{p}} 2^{\frac{j}{p}(1-\frac{1}{p})}\\
    \nonumber & = \ell(Q)^{\frac{1}{p}} \sum_{j\leq k} 2^{\frac{j}{p}}\\
   \nonumber & =  \ell(Q)^{\frac{1}{p}} 2^{\frac{k}{p}}.
\end{align}

We next estimate $\mathcal{II}$. Note that the estimate (5.9) of \cite{Spector-Stolyarov} implies 
\begin{align}\label{L-P-esti}
    \|\widetilde{P}_ja\|_{L^{\infty}(\mathbb{R}^n)} \lesssim \frac{2^{j(n-\beta)}}{\ell(Q)^{\beta}}
\end{align}
for all $j\in \mathbb{Z}$.

By Lemma \ref{kerne--esti-1} and \eqref{L-P-esti}, introducing a second Littlewood-Paley projector as in \eqref{second_projector}, we obtain
\begin{align}\label{Linfty-est-II}
   \nonumber \|\widetilde{T}_{b_p}^{j}(a)\|_{L^\infty(\mathbb{R}^n)} & = \|K_{b_p}^{j}\ast \widetilde{P}_j a\|_{L^\infty(\mathbb{R}^n)}\\
    & \leq  \|K_{b_p}^{j}\|_{L^1(\mathbb{R}^n)} \|\widetilde{P}_j a\|_{L^\infty(\mathbb{R}^n)}
\end{align}

Young's inequality gives the estimate
\begin{align}\label{L1-esti-II}
    \|\widetilde{T}_{b_p}^{j}(a)\|_{L^1(\mathbb{R}^n)} = \|K_{b_p}^{j}\ast a\|_{L^1(\mathbb{R}^n)}
    \leq  \|K_{b_p}^{j}\|_{L^1(\mathbb{R}^n)} \|a\|_{L^1(\mathbb{R}^n)}.
\end{align}

The estimates \eqref{L1-esti-II}, \eqref{Linfty-est-II}, and Lemma \ref{kerne--esti-1} together imply
\begin{align}\label{II}
   \nonumber \mathcal{II} &\lesssim  \sum_{j>k} \|K_{b_p}^{j}\|_{L^1(\mathbb{R}^n)} \|\widetilde{P}_j a\|_{L^\infty(\mathbb{R}^n)}^{1-1/p}\\
  \nonumber &\lesssim \sum_{j>k}  2^{-j(1-\frac{1}{p})} \left[ \frac{2^{-j(\beta-n)}}{\ell(Q)^{\beta}}\right]^{1-\frac{1}{p}}\\
   & =  \frac{1}{\ell(Q)^{\beta(1-\frac{1}{p})}} \sum_{j>k} 2^{-j(1-\frac{1}{p})(1-n+\beta)}\\
   \nonumber & =  \frac{1}{\ell(Q)^{\beta(1-\frac{1}{p})}}  2^{-k(1-\frac{1}{p})(1-n+\beta)}.
\end{align}
In the last line, we have used the fact that $\beta>n-1$.

Combining the estimates \eqref{I} and \eqref{II}, we find
\begin{align}\label{Lp-esti}
    \|\widetilde{T}_{b_p}(a)\|_{L^p(\mathbb{R}^n)}\lesssim  \ell(Q)^{\frac{1}{p}} 2^{\frac{k}{p}}+ \frac{1}{\ell(Q)^{\beta(1-\frac{1}{p})}}  2^{-k(1-\frac{1}{p})(1-n+\beta)}.
\end{align}
A choice of $k$ such that 
$$2^k \approx \ell(Q)^{\left( \frac{1-n}{p'-(n-\beta)} -1\right)}$$
in the inequality \eqref{Lp-esti} yields
\begin{align*}
  \|\widetilde{T}_{b_p}(a)\|_{L^p(\mathbb{R}^n)}\lesssim \ell(Q)^{\left( \frac{1-n}{p'-(n-\beta)}\right)}. 
\end{align*}
This completes the proof of the lemma.
 \end{proof}

 \subsection{ Dimension \texorpdfstring{$n\geq 3$}{}}

 \begin{lemma}\label{lemma-small-cubes}
     Let $n\geq 3$ and $p\in [1, 2]$. Let $a$ be a $\beta$-atom adapted to a cube $Q$ with a center at zero and length $\ell(Q)<\frac{1}{3\sqrt{n}}$. Then for $\beta > \frac{n-1}{2}+\frac{1}{p}$ there exists a constant $C=C(n,p,\beta)$ independent of $a$ such that 
    \begin{align*}
        \|\widetilde{T}_{b_p}(a)\|_{L^p(\mathbb{R}^n)}  \leq C.
    \end{align*} 
 \end{lemma}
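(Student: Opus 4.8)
The plan is to prove the estimate by splitting the kernel into its near-singularity part and its far part, exploiting the fact that the cube $Q$ is small and centered at the origin so that the singularity sphere $\{|x|=1\}$ of $K_{b_p}$ is far from $\supp(K_{b_p}\ast a)$ for the relevant region. More precisely, since $\ell(Q)<\frac{1}{3\sqrt n}$, the cube $Q$ is contained in $B(0,\tfrac13)$, so for $x$ with $|x|\leq \tfrac12$ or $|x|\geq 2$ the convolution $\widetilde T_{b_p}(a)(x)=\int K_{b_p}(x-y)a(y)\,dy$ only sees the portion of $K_{b_p}$ away from $|x-y|=1$, where by Proposition \ref{wave-ker-esti} the kernel and its derivatives are smooth with good decay. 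The genuinely delicate region is the annulus $A:=\{\tfrac12<|x|<2\}$, where $K_{b_p}(x-y)$ can be singular. The strategy is therefore: (i) on $\R^n\setminus A$, use smoothness/decay of $K_{b_p}$ together with the cancellation $\int a=0$ to get an $L^1$ (hence $L^p$, $p\le 2$, after also using an easy $L^\infty$ bound and interpolation, or directly $L^p$) bound that is uniform in the atom; (ii) on the annulus $A$, which has bounded measure, use H\"older to reduce the $L^p(A)$ norm to the $L^2(A)$ norm, and then bound $\|\widetilde T_{b_p}(a)\|_{L^2}$ by comparison with the Riesz potential, invoking Lemma \ref{Riesz-potential-esti}.

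For step (ii), the key observation is that the multiplier of $\widetilde T_{b_p}$ is $\psi(\xi)(2\pi|\xi|)^{-b_p}e^{i|\xi|}$, whose modulus is comparable to $\psi(\xi)(2\pi|\xi|)^{-b_p}$, which in turn is dominated by the Bessel-type multiplier $(1+4\pi^2|\xi|^2)^{-b_p/2}$ up to the cutoff. By Plancherel,
\begin{align*}
\|\widetilde T_{b_p}(a)\|_{L^2(\R^n)}^2 = \int_{\R^n} \psi(\xi)^2 (2\pi|\xi|)^{-2b_p}|\widehat a(\xi)|^2\,d\xi \lesssim \int_{\R^n} (2\pi|\xi|)^{-2b_p}|\widehat a(\xi)|^2\,d\xi = \|I_{b_p}(a)\|_{L^2(\R^n)}^2,
\end{align*}
so Lemma \ref{Riesz-potential-esti} gives $\|\widetilde T_{b_p}(a)\|_{L^2(\R^n)}\lesssim \ell(Q)^{1/2-1/p}\le C$ for $\ell(Q)<1$ since $1/2-1/p\le 0$ (here the hypothesis $\beta>\frac{n-1}{2}+\frac1p$ enters through Lemma \ref{Riesz-potential-esti}). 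Then $\|\widetilde T_{b_p}(a)\|_{L^p(A)}\leq |A|^{1/p-1/2}\|\widetilde T_{b_p}(a)\|_{L^2(A)}\lesssim 1$. Actually this $L^2$-comparison bound already controls $\widetilde T_{b_p}(a)$ on all of $\R^n$ in $L^2$, but for $p<2$ one cannot simply use H\"older globally since $\R^n$ has infinite measure, which is precisely why the region $\R^n\setminus A$ must be handled separately via decay.

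For step (i), on $\R^n\setminus A$ write, using $a(Q)=0$,
\begin{align*}
\widetilde T_{b_p}(a)(x) = \int_Q \bigl(K_{b_p}(x-y)-K_{b_p}(x)\bigr)a(y)\,dy,
\end{align*}
and estimate $|K_{b_p}(x-y)-K_{b_p}(x)|\leq \ell(Q)\sup_{z\in Q}|\nabla K_{b_p}(x-z)|$. For $|x|\leq \tfrac12$ the gradient is bounded (the kernel is smooth near the origin away from the sphere — one should check integrability/boundedness near $0$, which follows from the expansion of $K_{b_p}$ as a sum of a smooth function and a term controlled by Proposition \ref{wave-ker-esti}), and for $|x|\ge 2$ Proposition \ref{wave-ker-esti}(2) gives rapid decay; integrating $\ell(Q)\|\nabla K_{b_p}\|_{L^1(\R^n\setminus A)}\|a\|_{L^1}\lesssim \ell(Q)\le 1$ closes this region in $L^1$, and combining with the crude bound $\|\widetilde T_{b_p}(a)\|_{L^\infty(\R^n\setminus A)}\lesssim \|K_{b_p}\|_{L^\infty(\R^n\setminus A)}\|a\|_{L^1}\lesssim 1$ gives the $L^p$ bound by interpolation for $1\le p\le 2$. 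The main obstacle is the bookkeeping near the sphere $|x|=1$: one must make sure the decomposition of $\R^n$ into $\{|x|\le 1/2\}$, the annulus $A$, and $\{|x|\ge 2\}$ genuinely isolates the singular set from the support considerations, which is exactly where the smallness $\ell(Q)<\frac{1}{3\sqrt n}$ and the centering at the origin are used — and to verify that the kernel behavior near $|x|=0$ (not covered directly by Proposition \ref{wave-ker-esti}, which is stated as $|x|\to 1$ and $|x|\to\infty$) poses no problem, e.g. by noting $\psi\equiv 0$ near the origin makes $K_{b_p}$ agree with a Schwartz function plus a harmless piece there.
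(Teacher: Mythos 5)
Your step (ii) contains the decisive error, and it is exactly the point where the lemma is delicate. Lemma \ref{Riesz-potential-esti} (via Plancherel, as you correctly set up) gives $\|\widetilde{T}_{b_p}(a)\|_{L^2(\mathbb{R}^n)}\lesssim \ell(Q)^{\frac12-\frac1p}$, but for $p<2$ the exponent $\frac12-\frac1p$ is \emph{negative}, so for $\ell(Q)<1$ this quantity is $\geq 1$ and blows up like $\ell(Q)^{-(\frac1p-\frac12)}$ as $\ell(Q)\to 0$; your assertion that it is $\leq C$ ``since $1/2-1/p\le 0$'' has the inequality backwards ($t^{\alpha}\le 1$ for $t\le 1$ requires $\alpha\ge 0$). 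Consequently, applying H\"older on the \emph{fixed} annulus $A=\{\tfrac12<|x|<2\}$ only yields $\|\widetilde{T}_{b_p}(a)\|_{L^p(A)}\lesssim |A|^{\frac1p-\frac12}\,\ell(Q)^{\frac12-\frac1p}$, which is not uniformly bounded in the atom. The $L^2$ comparison with the Riesz potential can only be used on a region whose measure shrinks like $\ell(Q)$, so that the H\"older factor $|A_2|^{\frac1p-\frac12}\sim \ell(Q)^{\frac1p-\frac12}$ exactly cancels the growth of the $L^2$ norm.

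This is how the paper's proof proceeds: it decomposes $\mathbb{R}^n$ into the thin annulus $A_2=\{1-2\sqrt{n}\,\ell(Q)\le |x|\le 1+2\sqrt{n}\,\ell(Q)\}$ around the singular sphere, the regions $A_1=\{|x|\le 1-2\sqrt{n}\,\ell(Q)\}$ and $A_3=\{1+2\sqrt{n}\,\ell(Q)\le|x|\le 2\sqrt{n}\}$, and the far region $A_4$. On $A_2$ it uses H\"older plus the Riesz-potential $L^2$ bound as above; on $A_1,A_3$ it uses the cancellation $a(Q)=0$ together with the gradient estimate from Proposition \ref{wave-ker-esti} with $|\alpha|=1$, namely $|\nabla K_{b_p}(x)|\lesssim |1-|x||^{-\frac1p-1}$, which gives the pointwise bound $|\widetilde{T}_{b_p}(a)(x)|\lesssim \ell(Q)\,|1-|x||^{-\frac1p-1}$ whose $p$-th power integrates over $\{|1-|x||\ge 2\sqrt n\,\ell(Q)\}$ to $\sim \ell(Q)^{-p}$, cancelled by the factor $\ell(Q)^p$; on $A_4$ it uses the rapid decay. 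Your step (i) on $\{|x|\le\frac12\}\cup\{|x|\ge 2\}$ is essentially a special case of this and is fine (modulo the somewhat loose justification of smoothness of $K_{b_p}$ near the origin), but your treatment omits precisely the intermediate zone where the gradient of $K_{b_p}$ is large but the $L^2$ argument is too lossy, so as written the proof does not close for $p<2$.
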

 \begin{proof}
 Let $n\geq 3$, $p\in [1, 2]$ and $\frac{n-1}{2}+\frac{1}{p} < \beta \leq n$. Let $a$ be a $\beta$-atom adapted to a cube $Q$ with a center at zero and a length $\ell(Q)<\frac{1}{3\sqrt{n}}$.
     
By the cancellation condition of the atom and Proposition \ref{wave-ker-esti}, we have
\begin{align}\label{Ker-esti-local-atom}
    \nonumber \left|\widetilde{T}_{b_p}(a)(x) \right|& =\left|\int_{Q} K_{b_p}(x-y)a(y) \, dy \right|\\
    \nonumber & =\left|\int_{Q} \left( K_{b_p}(x-y)-K_{b_p}(x)\right) a(y) \, dy \right|\\
   \nonumber &\leq  \int_{Q}\int_{0}^{1} \left| \nabla K_{b_p}(x-ty)\right| |y| |a(y)| \, dt \, dy \\
   \nonumber  & \leq  \ell(Q) \sup_{y\in Q} \left| \nabla K_{b_p}(x-y)\right|\\
   & \lesssim  \left \{
   \begin{array}{ll}
     \ell(Q)~ |1-|x||^{-1/p-1}   &  \text{if} \, \, ||x|-1|\geq 2\sqrt{n}\ell(Q)\\
      \ell(Q) ~|x|^{-N}  & \text{if} \, \, |x|\geq 2\sqrt{n}\ell(Q)
   \end{array}
   \right.
\end{align}
for all $N>0$.

We write
\begin{align*}
    \int \left| \widetilde{T}_{b_p}(a)(x) \right|^p \, dx & =  \sum_{i=1}^{4}\int_{A_i} \left| \widetilde{T}_{b_p}(a)(x) \right|^p \, dx \\
    & = J_1 + J_2 + J_3 + J_4
\end{align*}
where
\begin{align*}
    A_1& =\{x\in \mathbb{R}^n: |x|\leq 1-2\sqrt{n}\ell(Q)\},\\
    A_2& = \{x\in \mathbb{R}^n: 1-2\sqrt{n}\ell(Q) \leq |x|\leq 1+2\sqrt{n}\ell(Q)\},\\
    A_3& = \{x\in \mathbb{R}^n: 1+2\sqrt{n}\ell(Q) \leq |x|\leq 2\sqrt{n}\},\\
    A_4& =  R^{n}\setminus \bigcup_{i=1}^3 A_i.
\end{align*}

We estimate $J_1$ and $J_3$ first. The estimate \eqref{Ker-esti-local-atom} yields 
\begin{align*}
    J_1 \lesssim \ell(Q)^p \int_{|x|\leq 1-2\sqrt{n}\ell(Q)} (1-|x|)^{-1-p} \, dx\lesssim \ell(Q)^p \ell(Q)^{-p} \lesssim 1.
\end{align*}
and 
\begin{align*}
    J_3 \lesssim \ell(Q)^p \int_{ 1+2\sqrt{n}\ell(Q)\leq |x|} (|x|-1)^{-1-p} \, dx\lesssim \ell(Q)^p \ell(Q)^{-p} \lesssim 1.
\end{align*}

For $J_4$, again using  \eqref{Ker-esti-local-atom}, we obtain
\begin{align*}
    J_4 \lesssim \ell(Q)^p \int_{ 2\sqrt{n}\leq |x|} |x|^{-Np} \, dx\lesssim \ell(Q)^p \lesssim 1
\end{align*}
for $N>n/p$.

Finally, we calculate the remaining part $J_2$. Using Holder's inequality and $\ell(Q)\lesssim 1$, we have
\begin{align}\label{J2-esti}
   \nonumber J_2 & \lesssim  \ell(Q)^{\frac{1}{p}-\frac{1}{2}} \|\widetilde{T}_{b_p}(a)\|_{L^2(\mathbb{R}^n)}\\
    & \lesssim  \ell(Q)^{\frac{1}{p}-\frac{1}{2}}  \left( \int (2\pi |\xi|)^{-2b_p}|\widehat{a}(\xi)|^2 \, d\xi \right)^{1/2}\\
   \nonumber & = \ell(Q)^{\frac{1}{p}-\frac{1}{2}} \|I_{b_p}(a)\|_{L^2(\mathbb{R}^n)}
\end{align}
where $I_{b_p}$ is the Riesz potential.  Lemma \ref{Riesz-potential-esti} gives the bound
\begin{align}\label{RP-esti}
 \|I_{b_p}(a)\|_{L^2(\mathbb{R}^n)}  \lesssim   \ell(Q)^{\frac{1}{2}-\frac{1}{p}},
\end{align}
In particular, the estimates \eqref{J2-esti} and \eqref{RP-esti} together yield the bound
$$J_2\lesssim 1.$$
Finally, combining the estimates of $J_1$, $J_2$, $J_3$ and $J_4$, we obtain the desired bound. This completes the proof of the lemma.
 \end{proof}

\begin{proof}[Proof of Theorem \ref{Thm-p-less-2}]
Let $n\geq3$, $\beta \in (n-1,n]$, and $p\in [1,2]$. As argued in the proof of Theorem \ref{Thm-p-bigger-2}, it is enough to prove Theorem \ref{Thm-p-less-2} for $b=b_p$. Note that using the decomposition \eqref{Decomposition-operator} , we can write
\begin{align*}
T_{b_p}\mu = \overline{T}_{b_p}\mu+ \nu_{b_p}\ast \widetilde{T}_{b_p}\mu
\end{align*}
where $\overline{T}_{b_p}$, $\widetilde{T}_{b_p}$ are Fourier multipliers operators defined in \eqref{goodkernel} and \eqref{main-operator} and $\nu_{b_p}$ is bounded measure. Since operator $\overline{T}_{b_p}$ maps from $L^1(\mathbb{R}^n)$ to $L^p(\mathbb{R}^n)$ for any $1\leq p \leq +\infty$ and $\nu_{b_p}$ is bounded measure, it suffices to prove the estimate for $\widetilde{T}_{b_p}$.

Let $a \in C^\infty_c(\mathbb{R}^n)$ be an $\beta$-atom. Again by Proposition \ref{smooth_atoms} it is enough to prove that there exists a uniform constant $C>0$ independent of $a$ such that 
\begin{align*}
    \|\widetilde{T}_{b_p}(a)\|_{L^p(\mathbb{R}^n)}\leq C.
\end{align*}
Since the operator $\widetilde{T}_{b_p}$ is translation invariant, without loss of generality, let us assume that the support of $a$ is contained in a cube $Q$ with center at the origin and side length $\ell(Q)$.

First, let us consider the case for $\ell(Q)>\frac{1}{3\sqrt{n}}$. Note that Lemma \ref{LP-arguments} gives
\begin{align*}
  \|\widetilde{T}_{b_p}(a)\|_{L^p(\mathbb{R}^n)}\lesssim \ell(Q)^{\ -\frac{n-1}{p'-(n-\beta)}} \lesssim 1. 
\end{align*}
In the above estimate, we used the fact that $n-1<\beta\leq n$ and $\ell(Q)>\frac{1}{3\sqrt{n}}$. This completes the proof for large cubes.  We next consider the case $\ell(Q)\leq \frac{1}{3\sqrt{n}}$. For $n\geq 3$ and $p\in [1,2]$, we have $n-1 > \frac{n-1}{2}+\frac{1}{p}$. Therefore, the estimate for this case directly follows from Lemma \ref{lemma-small-cubes}.  This completes the proof of Theorem \ref{Thm-p-less-2}.
\end{proof}

\subsection{Dimension \texorpdfstring{$n=2$}{}}

\begin{lemma}\label{lemma-small-cubes-2}
     Let $n=2$ and $p\in (1, 2]$. Let $a$ be a $\beta$-atom adapted to a cube $Q$ with center at zero and length $\ell(Q)<\frac{1}{3\sqrt{2}}$. Then for $\beta > \frac{2}{p}$ there exists a constant $C=C(\beta,p)$ independent of $a$ such that 
    \begin{align*}
        \|\widetilde{T}_{b_p}(a)\|_{L^p(\mathbb{R}^2)}  \leq C.
    \end{align*} 
 \end{lemma}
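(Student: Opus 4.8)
The plan is to run the proof of Lemma \ref{lemma-small-cubes} essentially verbatim, substituting Lemma \ref{Riesz-potential-esti-n=2} for Lemma \ref{Riesz-potential-esti}. First I would use the cancellation $a(Q)=0$, Proposition \ref{wave-ker-esti} with $|\alpha|=1$ (noting $b_p-\frac{n+1}{2}-1=-\frac1p-1$ when $n=2$, the same exponent as in higher dimensions), and the mean value theorem to obtain the pointwise bound
\[
\left|\widetilde{T}_{b_p}(a)(x)\right|\;\leq\;\ell(Q)\sup_{y\in Q}\left|\nabla K_{b_p}(x-y)\right|\;\lesssim\;
\begin{cases}
\ell(Q)\,|1-|x||^{-1/p-1} & \text{if } ||x|-1|\geq 2\sqrt{2}\,\ell(Q),\\
\ell(Q)\,|x|^{-N} & \text{if } |x|\geq 2\sqrt{2}\,\ell(Q),
\end{cases}
\]
valid for every $N>0$; the hypothesis $\ell(Q)<\frac{1}{3\sqrt{2}}$, which gives $|ty|\leq|y|\leq\frac{1}{\sqrt2}\ell(Q)$ for $y\in Q$ and $t\in[0,1]$, is exactly what keeps $x-ty$ in the region where the relevant estimate of Proposition \ref{wave-ker-esti} applies.

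Next I would decompose $\int_{\mathbb{R}^2}|\widetilde{T}_{b_p}(a)|^p=J_1+J_2+J_3+J_4$ over the regions $A_1,\dots,A_4$ as in Lemma \ref{lemma-small-cubes}, with $2\sqrt n$ replaced by $2\sqrt2$. For $J_1$ and $J_3$, passing to polar coordinates in $\mathbb{R}^2$ the Jacobian $r$ is bounded on the relevant sets, so $\int_{A_1}(|1-|x||)^{-1-p}\,dx$ and $\int_{A_3}(|x|-1)^{-1-p}\,dx$ are each $\lesssim\ell(Q)^{-p}$ since $-1-p<-1$; hence $J_1,J_3\lesssim\ell(Q)^p\ell(Q)^{-p}\lesssim1$. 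For $J_4=\int_{|x|>2\sqrt2}|\widetilde{T}_{b_p}(a)|^p$, the integral $\int_{|x|>2\sqrt2}|x|^{-Np}\,dx$ converges once $N>2/p$, so $J_4\lesssim\ell(Q)^p\lesssim1$.

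The main term is $J_2$, the integral over the annulus $A_2=\{1-2\sqrt2\ell(Q)\leq|x|\leq1+2\sqrt2\ell(Q)\}$, whose two-dimensional Lebesgue measure satisfies $|A_2|\lesssim\ell(Q)$. Using the inclusion $\|f\|_{L^p(A_2)}\leq|A_2|^{1/p-1/2}\|f\|_{L^2(A_2)}$ (valid since $p\leq2$) together with Plancherel's theorem, which identifies $\|\widetilde{T}_{b_p}(a)\|_{L^2(\mathbb{R}^2)}$ with a constant multiple of $\|I_{b_p}(a)\|_{L^2(\mathbb{R}^2)}$, I obtain
\[
J_2\;=\;\|\widetilde{T}_{b_p}(a)\|_{L^p(A_2)}^p\;\lesssim\;|A_2|^{1-p/2}\,\|I_{b_p}(a)\|_{L^2(\mathbb{R}^2)}^{\,p}\;\lesssim\;\ell(Q)^{1-p/2}\,\|I_{b_p}(a)\|_{L^2(\mathbb{R}^2)}^{\,p}.
\]
For $p\in(1,2)$, Lemma \ref{Riesz-potential-esti-n=2}—which is precisely where the hypothesis $\beta>2/p$ is used, and also why $p=1$ is excluded, the condition $\beta>2$ being then unattainable—gives $\|I_{b_p}(a)\|_{L^2(\mathbb{R}^2)}\lesssim\ell(Q)^{1/2-1/p}$, so that $J_2\lesssim\ell(Q)^{1-p/2}\ell(Q)^{p(1/2-1/p)}=\ell(Q)^0=1$. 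At the endpoint $p=2$ one has $b_2=1=n/2$, and the Sobolev inequality \eqref{sobolev} with $\alpha=n/2$ yields $\|I_{b_2}(a)\|_{L^2(\mathbb{R}^2)}\lesssim\|a\|_{DS_\beta(\mathbb{R}^2)}\lesssim1$, again giving $J_2\lesssim1$. Summing $J_1,\dots,J_4$ and taking $p$-th roots produces $\|\widetilde{T}_{b_p}(a)\|_{L^p(\mathbb{R}^2)}\lesssim1$.

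I do not expect a genuine obstacle: the argument is structurally identical to the $n\geq3$ case, and the only things requiring care are two-dimensional bookkeeping—that the polar Jacobian does not spoil convergence of the $J_1,J_3,J_4$ integrals, that $|A_2|\lesssim\ell(Q)$, and that the exponent arithmetic in $J_2$ closes exactly. The one genuinely $n=2$–specific input is Lemma \ref{Riesz-potential-esti-n=2}, whose threshold $\beta>2/p$ is exactly the hypothesis of the present lemma.
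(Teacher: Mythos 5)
Your argument is exactly the paper's proof: the proof of Lemma \ref{lemma-small-cubes-2} is declared to be identical to that of Lemma \ref{lemma-small-cubes}, with Lemma \ref{Riesz-potential-esti-n=2} replacing Lemma \ref{Riesz-potential-esti} in the estimate of $J_2$, which is precisely what you carry out, and your exponent bookkeeping ($|A_2|\lesssim\ell(Q)$, $J_2\lesssim\ell(Q)^{1-p/2}\ell(Q)^{p(1/2-1/p)}=1$) is correct. Your extra observation that the endpoint $p=2$ can be handled via $b_2=n/2$ and the Sobolev inequality \eqref{sobolev} is a small, correct supplement covering a case Lemma \ref{Riesz-potential-esti-n=2} formally excludes, but it does not change the route.
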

 \begin{proof}
     The proof of Lemma \ref{lemma-small-cubes-2} is identical to that of the Lemma \ref{lemma-small-cubes}, except in the estimate for $J_2$, we use Lemma \ref{Riesz-potential-esti-n=2} instead of Lemma \ref{Riesz-potential-esti}, which gives the restriction $\beta>\frac{2}{p}$.
 \end{proof}

 \begin{proof}[Proof of Theorem \ref{Thm-p-less-2-2}]
  The argument is similar to that in Theorem \ref{Thm-p-less-2-2}, where again for large atoms the use of Lemma \ref{LP-arguments}, and for small atoms, Lemma \ref{lemma-small-cubes-2} (in place of Lemma \ref{lemma-small-cubes}).  We therefore omit the details for brevity.  This completes the proof of Theorem \ref{Thm-p-less-2-2}.
 \end{proof}

\section*{Acknowledgments} 
The authors would like to thank Po Lam Yung for discussions regarding complex interpolation with $BMO$ as an endpoint, in particular for communicating to us the construction used in the proof of Lemma \ref{pointwise_approximation}, as well as the arguments involving analyticity in the proof of Theorem \ref{Thm-p-bigger-2}. 
The authors would like to thank Andreas Seeger for discussions regarding the dependence of the constant in the Fefferman-Stein maximal inequality.
Needless to say that we remain responsible for any remaining shortcomings.

R.~Basak is supported by the National Science and Technology Council of Taiwan under research grant number 113-2811-M-003-014.  D. Spector is supported by the National Science and Technology Council of Taiwan under research grant number 113-2115-M-003-017-MY3 and the Taiwan Ministry of Education under the Yushan Fellow Program.

\begin{bibdiv}
		\begin{biblist}	

        \bib{ASW}{article}{
   author={Ayoush, Rami},
   author={Stolyarov, Dmitriy},
   author={Wojciechowski, Michal},
   title={Sobolev martingales},
   journal={Rev. Mat. Iberoam.},
   volume={37},
   date={2021},
   number={4},
   pages={1225--1246},
   issn={0213-2230},
   review={\MR{4269395}},
   doi={10.4171/rmi/1224},
}

        \bib{Basak-J}{article}{
           author={Basak, Riju},
   author={Jotsaroop, Kaur},
   title={On Hardy spaces associated with the twisted Laplacian and sharp estimates for the corresponding wave operator},
   journal={preprint, 
https://doi.org/10.48550/arXiv.2509.00327
},
   volume={},
   date={},
   number={},
   pages={},
   issn={},
   review={},
}

        \bib{BCGW-Revista}{article}{
   author={Bramati, Roberto},
   author={Ciatti, Paolo},
   author={Green, John},
   author={Wright, James},
   title={Oscillating spectral multipliers on groups of Heisenberg type},
   journal={Rev. Mat. Iberoam.},
   volume={38},
   date={2022},
   number={5},
   pages={1529--1551},
   issn={0213-2230},
   review={\MR{4502074}},
   doi={10.4171/rmi/1302},
}

\bib{Bui-Anconca-Duong}{article}{
   author={Bui, The Anh},
   author={D'Ancona, Piero},
   author={Duong, Xuan Thinh},
   title={On sharp estimates for Schr\"odinger groups of fractional powers
   of nonnegative self-adjoint operators},
   journal={J. Differential Equations},
   volume={381},
   date={2024},
   pages={260--292},
   issn={0022-0396},
   review={\MR{4672180}},
   doi={10.1016/j.jde.2023.11.019},
}

\bib{BDN-Revista-2020}{article}{
   author={Bui, The Anh},
   author={D'Ancona, Piero},
   author={Nicola, Fabio},
   title={Sharp $L^p$ estimates for Schr\"odinger groups on spaces of
   homogeneous type},
   journal={Rev. Mat. Iberoam.},
   volume={36},
   date={2020},
   number={2},
   pages={455--484},
   issn={0213-2230},
   review={\MR{4082915}},
   doi={10.4171/rmi/1136},
}

\bib{BDHH-IMRN}{article}{
   author={Bui, The Anh},
   author={Duong, Xuan Thinh},
   author={Hong, Qing},
   author={Hu, Guorong},
   title={On Schr\"odinger groups of fractional powers of Hermite operators},
   journal={Int. Math. Res. Not. IMRN},
   date={2023},
   number={7},
   pages={6164--6185},
   issn={1073-7928},
   review={\MR{4565709}},
   doi={10.1093/imrn/rnac037},
}

\bib{BHH-JGA-2022}{article}{
   author={Bui, The Anh},
   author={Hong, Qing},
   author={Hu, Guorong},
   title={On boundedness of oscillating multipliers on stratified Lie
   groups},
   journal={J. Geom. Anal.},
   volume={32},
   date={2022},
   number={8},
   pages={Paper No. 222, 20},
   issn={1050-6926},
   review={\MR{4439910}},
   doi={10.1007/s12220-022-00960-w},
}

\bib{CFS}{article}{
title = {Hardy space estimates for the wave equation on compact Lie groups},
author = {Chen, Jiecheng},
author = {Fan, Dashan},
author = {Sun, Lijing},
journal = {Journal of Functional Analysis},
volume = {259},
number = {12},
pages = {3230-3264},
year = {2010},
issn = {0022-1236},
doi = {https://doi.org/10.1016/j.jfa.2010.08.020},
}

\bib{Cho-Lee-Li}{article}{
   author={Cho, Chu-Hee},
   author={Lee, Sanghyuk},
   author={Li, Wenjuan},
   title={Endpoint estimates for maximal operators associated to the wave equation},
   journal={preprint, 
https://doi.org/10.48550/arXiv.2501.01686},
   volume={},
   date={},
   number={},
   pages={},
   issn={},
   review={},
}

\bib{FS}{article}{
   author={Fefferman, C.},
   author={Stein, E. M.},
   title={$H\sp{p}$ spaces of several variables},
   journal={Acta Math.},
   volume={129},
   date={1972},
   number={3-4},
   pages={137--193},
   issn={0001-5962},
   review={\MR{0447953}},
   doi={10.1007/BF02392215},
}

\bib{GRVS}{article}{
   author={Gmeineder, Franz},
   author={Rai\c t\u a, Bogdan},
   author={Van Schaftingen, Jean},
   title={On limiting trace inequalities for vectorial differential
   operators},
   journal={Indiana Univ. Math. J.},
   volume={70},
   date={2021},
   number={5},
   pages={2133--2176},
   issn={0022-2518},
   review={\MR{4340491}},
   doi={10.1512/iumj.2021.70.8682},
}

\bib{HRS}{article}{
   author={Hernandez, Felipe},
   author={Rai\c t\u a, Bogdan},
   author={Spector, Daniel},
   title={Endpoint $L^1$ estimates for Hodge systems},
   journal={Math. Ann.},
   volume={385},
   date={2023},
   number={3-4},
   pages={1923--1946},
   issn={0025-5831},
   review={\MR{4566709}},
   doi={10.1007/s00208-022-02383-y},
}

\bib{HS}{article}{
   author={Hernandez, Felipe},
   author={Spector, Daniel},
   title={Fractional integration and optimal estimates for elliptic systems},
   journal={Calc. Var. Partial Differential Equations},
   volume={63},
   date={2024},
   number={5},
   pages={Paper No. 117, 29},
   issn={0944-2669},
   review={\MR{4739434}},
   doi={10.1007/s00526-024-02722-8},
}

\bib{HP}{article}{
   author={Hounie, J.},
   author={Picon, T.},
   title={Local Hardy-Littlewood-Sobolev inequalities for canceling elliptic
   differential operators},
   journal={J. Math. Anal. Appl.},
   volume={494},
   date={2021},
   number={1},
   pages={Paper No. 124598, 24},
   issn={0022-247X},
   review={\MR{4153251}},
   doi={10.1016/j.jmaa.2020.124598},
}

\bib{JT-Pisa}{article}{
   author={Jotsaroop, Kaur},
   author={Thangavelu, Sundaram},
   title={$L^p$ estimates for the wave equation associated to the Grushin
   operator},
   journal={Ann. Sc. Norm. Super. Pisa Cl. Sci. (5)},
   volume={13},
   date={2014},
   number={3},
   pages={775--794},
   issn={0391-173X},
   review={\MR{3331528}},
}

\bib{kMST-I}{article}{
      author={Kato, Tomoya},
      author={ Miyachi, Akihiko},
      author={ Shida, Naoto},
      author={Tomita, Naohito},
      title={On some bilinear Fourier multipliers with oscillating factors, I}, 
      year={2024},
      journal = {preprint, https://arxiv.org/abs/2412.14742}, 
}

\bib{Kinoshita-Ko-Shiraki}{article}{
   author={Kinoshita, Shinya},
   author={Ko, Hyerim},
   author={Shiraki, Shobu},
   title={Maximal estimates for orthonormal systems of wave equations},
   journal={to appear in J. Anal. Math},
   volume={},
   date={},
   number={},
   pages={
https://doi.org/10.48550/arXiv.2508.19446
},
   issn={},
   review={},
}

\bib{Ko-Lee-Shiraki}{article}{
   author={Ko, Hyerim},
   author={Lee, Sanghyuk},
   author={Shiraki, Shobu},
   title={Maximal estimates for orthonormal systems of wave equations with sharp regularity},
   journal={preprint},
   volume={},
   date={},
   number={},
   pages={https://doi.org/10.48550/arXiv.2508.19451},
   issn={},
   review={},
}

\bib{MMN}{article}{
   author={Martini, Alessio},
   author={M\"uller, Detlef},
   author={Nicolussi Golo, Sebastiano},
   title={Spectral multipliers and wave equation for sub-Laplacians: lower
   regularity bounds of Euclidean type},
   journal={J. Eur. Math. Soc. (JEMS)},
   volume={25},
   date={2023},
   number={3},
   pages={785--843},
   issn={1435-9855},
   review={\MR{4577953}},
   doi={10.4171/jems/1191},
}

       \bib{Miyachi-wave}{article}{
   author={Miyachi, Akihiko},
   title={On some estimates for the wave equation in $L\sp{p}$\ and
   $H\sp{p}$},
   journal={J. Fac. Sci. Univ. Tokyo Sect. IA Math.},
   volume={27},
   date={1980},
   number={2},
   pages={331--354},
   issn={0040-8980},
   review={\MR{0586454}},
}

		\bib{Miyachi-singular}{article}{
   author={Miyachi, Akihiko},
   title={On some singular Fourier multipliers},
   journal={J. Fac. Sci. Univ. Tokyo Sect. IA Math.},
   volume={28},
   date={1981},
   number={2},
   pages={267--315},
   issn={0040-8980},
   review={\MR{0633000}},
}

\bib{Muller-Seeger}{article}{
   author={M\"uller, Detlef},
   author={Seeger, Andreas},
   title={Sharp $L^p$ bounds for the wave equation on groups of Heisenberg
   type},
   journal={Anal. PDE},
   volume={8},
   date={2015},
   number={5},
   pages={1051--1100},
   issn={2157-5045},
   review={\MR{3393673}},
   doi={10.2140/apde.2015.8.1051},
}

\bib{Muller-Stein}{article}{
   author={M\"uller, Detlef},
   author={Stein, Elias M.},
   title={$L^p$-estimates for the wave equation on the Heisenberg group},
   journal={Rev. Mat. Iberoamericana},
   volume={15},
   date={1999},
   number={2},
   pages={297--334},
   issn={0213-2230},
   review={\MR{1715410}},
   doi={10.4171/RMI/258},
}

\bib{Muller-Thiele}{article}{
   author={M\"uller, Detlef},
   author={Thiele, Christoph},
   title={Wave equation and multiplier estimates on $ax+b$ groups},
   journal={Studia Math.},
   volume={179},
   date={2007},
   number={2},
   pages={117--148},
   issn={0039-3223},
   review={\MR{2291727}},
   doi={10.4064/sm179-2-2},
}

\bib{Narayanan-Thangavelu}{article}{
   author={Narayanan, E. K.},
   author={Thangavelu, S.},
   title={Oscillating multipliers for some eigenfunction expansions},
   journal={J. Fourier Anal. Appl.},
   volume={7},
   date={2001},
   number={4},
   pages={373--394},
   issn={1069-5869},
   review={\MR{1836819}},
   doi={10.1007/BF02514503},
}

\bib{Peral-80}{article}{
   author={Peral, Juan C.},
   title={$L\sp{p}$\ estimates for the wave equation},
   journal={J. Functional Analysis},
   volume={36},
   date={1980},
   number={1},
   pages={114--145},
   issn={0022-1236},
   review={\MR{0568979}},
   doi={10.1016/0022-1236(80)90110-X},
}

\bib{RSS}{article}{
   author={Rai\c t\u a, Bogdan},
   author={Spector, Daniel},
   author={Stolyarov, Dmitriy},
   title={A trace inequality for solenoidal charges},
   journal={Potential Anal.},
   volume={59},
   date={2023},
   number={4},
   pages={2093--2104},
   issn={0926-2601},
   review={\MR{4684387}},
   doi={10.1007/s11118-022-10008-x},
}

\bib{Rogers-Villaroya}{article}{
   author={Rogers, Keith M.},
   author={Villarroya, Paco},
   title={Sharp estimates for maximal operators associated to the wave
   equation},
   journal={Ark. Mat.},
   volume={46},
   date={2008},
   number={1},
   pages={143--151},
   issn={0004-2080},
   review={\MR{2379688}},
   doi={10.1007/s11512-007-0063-8},
}

\bib{Rudin}{book}{
   author={Rudin, Walter},
   title={Real and complex analysis},
   edition={3},
   publisher={McGraw-Hill Book Co., New York},
   date={1987},
   pages={xiv+416},
   isbn={0-07-054234-1},
   review={\MR{0924157}},
}

\bib{Seeger-Linear}{article}{
     author={Seeger, Andreas},
 title={A remark on the constant in an inequality by Fefferman and Stein},
   journal={preprint},
   doi={\text{https://people.math.wisc.edu/~aseeger/papers/Remark-on-FS-inequality.pdf}},
}

\bib{Seeger-Sogge-Stein}{article}{
   author={Seeger, Andreas},
   author={Sogge, Christopher D.},
   author={Stein, Elias M.},
   title={Regularity properties of Fourier integral operators},
   journal={Ann. of Math. (2)},
   volume={134},
   date={1991},
   number={2},
   pages={231--251},
   issn={0003-486X},
   review={\MR{1127475}},
   doi={10.2307/2944346},
}

\bib{SSVS}{article}{
   author={Schikorra, Armin},
   author={Spector, Daniel},
   author={Van Schaftingen, Jean},
   title={An $L^1$-type estimate for Riesz potentials},
   journal={Rev. Mat. Iberoam.},
   volume={33},
   date={2017},
   number={1},
   pages={291--303},
   issn={0213-2230},
   review={\MR{3615452}},
   doi={10.4171/RMI/937},
}

   \bib{Spector}{article}{
   author={Spector, Daniel},
   title={An optimal Sobolev embedding for $L^1$},
   journal={J. Funct. Anal.},
   volume={279},
   date={2020},
   number={3},
   pages={108559, 26},
   issn={0022-1236},
   review={\MR{4093790}},
   doi={10.1016/j.jfa.2020.108559},
}

            \bib{Spector-Stolyarov}{article}{
				author = {D. Spector},
                author = {D. Stolyarov},
				title = {On dimension stable spaces of measure},
				journal = {to appear in Nonlinear Analysis, https://arxiv.org/abs/2405.10728}
			}

			\bib{S}{book}{
				author={Stein, Elias M.},
				title={Singular integrals and differentiability properties of functions},
				series={Princeton Mathematical Series, No. 30},
				publisher={Princeton University Press, Princeton, N.J.},
				date={1970},
				pages={xiv+290},
				review={\MR{0290095}},
			}
			
			\bib{Stein}{book}{
				author={Stein, Elias M.},
				title={Harmonic analysis: real-variable methods, orthogonality, and
					oscillatory integrals},
				series={Princeton Mathematical Series},
				volume={43},
				note={With the assistance of Timothy S. Murphy;
					Monographs in Harmonic Analysis, III},
				publisher={Princeton University Press, Princeton, NJ},
				date={1993},
				pages={xiv+695},
				isbn={0-691-03216-5},
				review={\MR{1232192}},
			}

\bib{SteinWeiss}{book}{
   author={Stein, Elias M.},
   author={Weiss, Guido},
   title={Introduction to Fourier analysis on Euclidean spaces},
   series={Princeton Mathematical Series},
   volume={No. 32},
   publisher={Princeton University Press, Princeton, NJ},
   date={1971},
   pages={x+297},
   review={\MR{0304972}},
}

\bib{SteinWeissHp}{article}{
   author={Stein, Elias M.},
   author={Weiss, Guido},
   title={On the theory of harmonic functions of several variables. I. The
   theory of $H\sp{p}$-spaces},
   journal={Acta Math.},
   volume={103},
   date={1960},
   pages={25--62},
   issn={0001-5962},
   review={\MR{0121579}},
   doi={10.1007/BF02546524},
}

\bib{Stolyarov}{article}{
   author={Stolyarov, D. M.},
   title={Hardy-Littlewood-Sobolev inequality for $p=1$},
   language={Russian, with Russian summary},
   journal={Mat. Sb.},
   volume={213},
   date={2022},
   number={6},
   pages={125--174},
   issn={0368-8666},
   translation={
      journal={Sb. Math.},
      volume={213},
      date={2022},
      number={6},
      pages={844--889},
      issn={1064-5616},
   },
   review={\MR{4461456}},
   doi={10.4213/sm9645},
}

\bib{Stolyarov-1}{article}{
   author={Stolyarov, Dmitriy},
   title={Fractional integration of summable functions: Maz'ya's
   $\Phi$-inequalities},
   journal={Ann. Sc. Norm. Super. Pisa Cl. Sci. (5)},
   volume={25},
   date={2024},
   number={3},
   pages={1727--1752},
   issn={0391-173X},
   review={\MR{4855777}},
}

\bib{VanSchaftingen_2013}{article}{
   author={Van Schaftingen, Jean},
   title={Limiting Sobolev inequalities for vector fields and canceling
   linear differential operators},
   journal={J. Eur. Math. Soc. (JEMS)},
   volume={15},
   date={2013},
   number={3},
   pages={877--921},
   issn={1435-9855},
   doi={10.4171/JEMS/380},
}

\bib{WangYan}{article}{
   author={Wang, Yunxiang},
   author={Yan, Lixin},
   title={Sharp $L^p$-estimates for wave equations on $ax+b$ groups},
   journal={preprint},
   volume={},
   date={},
   number={},
   pages={
https://doi.org/10.48550/arXiv.2506.17531
},
   issn={},
   review={},
}

		\end{biblist}
	\end{bibdiv}

\end{document}